\theoremstyle{plain}
\newtheorem{theorem}{Theorem}[section]
\newtheorem{lemma}[theorem]{Lemma}
\newtheorem{corollary}[theorem]{Corollary}
\newtheorem{proposition}[theorem]{Proposition}
\theoremstyle{definition}
\newtheorem{definition}[theorem]{Definition}
\newtheorem*{notation}{Notation}
\theoremstyle{remark}
\newtheorem{remark}[theorem]{Remark}
\newtheorem{example}[theorem]{Example}
\newcommand{\ep}{\varepsilon}
\newcommand{\ue}{\mathrm{e}}
\newcommand{\ud}{\mathrm{d}}
\newcommand{\N}{ \in \mathbb{N}}
\newcommand{\Z}{ \in \mathbb{Z}}
\newcommand{\R}{ \in \mathbb{R}}
\newcommand{\eps}{\varepsilon}
\newcommand{\zdef}{{\mathrel{\mathop:}=}}
\newcommand{\norm}[1]{\left\|#1\right\|}
\newcommand{\abs}[1]{\lvert#1\rvert}
\newcommand{\babs}[1]{\bigl|#1\bigr|}
\newcommand{\bgabs}[1]{\biggl|#1\biggr|}
\newcommand{\bnorm}[1]{\bigl\|#1\bigr\|}
\newcommand{\set}[2]{\{#1:#2\}}
\newcommand{\bset}[2]{\bigl\{#1:#2\bigr\}}
\newcommand{\bgset}[2]{\biggl\{#1:#2\biggr\}}
\newcommand{\Bset}[2]{\Bigl\{#1:#2\Bigr\}}
\newcommand{\absp}[1]{\left.\!\!\left\bracevert\!\!\mspace{1mu} \vphantom{Iy}#1 \mspace{1mu}\!\! \right\bracevert\!\!\right.}
\DeclareMathOperator{\FR}{Fr}
\DeclareMathOperator{\supp}{supp}
\title[Almost-periodically driven integrate-and-fire models]{Integrate-and-fire models with\\ an almost periodic input function}
\author{Piotr Kasprzak}
\author{Adam Nawrocki}
\address[A.~Nawrocki and P.~Kasprzak]{Faculty of Mathematics and Computer Science, Adam Mickiewicz University, ul. Umultowska 87, 61-614 Pozna\'n, Poland}
\email[A.~Nawrocki]{adam.nawrocki.amu.edu.pl}
\email[P.~Kasprzak]{kasp@amu.edu.pl}
\author{Justyna Signerska-Rynkowska}
\address[J.~Signerska-Rynkowska]{Faculty of Applied Physics and Mathematics, Gda\'nsk University of Technology, ul. G. Narutowicza 11/12, 80-233 Gda\'nsk, Poland} 
\email[J.~Signerska-Rynkowska]{jsignerska@mif.pg.gda.pl}
\keywords{Displacement map, firing map, firing rate, Haar wavelets, leaky integrate-and-fire model, linear differential equation, mean value, $\mu$-almost periodic function, neuron model, Stepanov almost periodic function, uniformly almost periodic function}
\subjclass[2010]{42A75, 37B55, 37E45, 92B20}
\begin{document}

\begin{abstract}
We investigate leaky integrate-and-fire models (LIF models for short) driven by Stepanov and $\mu$-almost periodic functions. Special attention is paid to the properties of a firing map and its displacement, which give information about the spiking behaviour of the system under consideration. We provide conditions under which such maps are well-defined for every $t \in \mathbb R$ and are uniformly continuous. Moreover, we show that the LIF model with a Stepanov almost periodic input has a uniformly almost periodic displacement map. We also show that in the case of a $\mu$-almost periodic drive it may happen that the displacement map corresponding to the LIF model is uniformly continuous, but is not $\mu$-almost periodic (and thus cannot be Stepanov or uniformly almost periodic). By allowing discontinuous inputs, we generalize some results of previous papers, showing, for example, that the firing rate for the LIF model with a Stepanov almost periodic drive exists and is unique. This is a starting point for the investigation of the dynamics of almost-periodically driven integrate-and-fire systems. The work provides also some contributions to the theory of Stepanov- and $\mu$-almost periodic functions.
\end{abstract}

\maketitle

\section{Introduction}

Integrate-and-fire models are commonly used for modelling the activity of neuronal cells (see for example~\cites{izykiewicz,keener1,tiesinga, ger}).  Although they are not able to capture all the electrophysiological phenomena, as a part of a big neural network, they are computationally more efficient than, for example, the classical Hodgkin-Huxley model (see for example~\cites{hodgin, ger}), and their biological relevance is in some cases satisfactory. In particular, the so-called leaky integrate-and-fire model\footnote{In the special case $\sigma=0$, the LIF model is usually referred to as the \emph{perfect integrator model} (or the PI model for short).}
\begin{equation}\label{lif4}
\dot{x}(t)=-\sigma x(t) +f(t) \qquad \text{for a.e. $t \in \mathbb R$},
\end{equation}
where $\sigma \geq 0$, is one of the models in the center of interest of neuroscientists. This model dates back to Lapicque (see~\cite{lapicque}), who discovered that the voltage $x$ across the cell membrane decays exponentially to its resting state $x_r$ and only an external input $f$, which might be current injected via an electrode or the impulse from a pre-synaptic neuron, might cause the increase of the voltage. Spiking (or firing), that is emitting an action potential, is introduced to this simple dynamics by adding the resetting condition
\begin{equation}\label{lif_spiking}
x(s)=x_{\vartheta} \quad \Longrightarrow \quad \lim_{t\to s^+}x(t)=x_r,
\end{equation}
which says that after the dynamical variable reaches a certain threshold $x_{\vartheta}$ at some time $s$, it is immediately reset to its resting value $x_r$ and the dynamics continues again from the point $(t,x_r)$. Although usually constant thresholds and resets are studied, it is also possible to consider integrate-and-fire models with varying bounds, allowing thus to incorporate into the model additional biological phenomena such as refractory periods and threshold modulation (see~\cite{gedeon}). Such models with time-dependent thresholds and/or resets in some cases can be reduced by an appropriate change of variables to those with constant $x_r$ and $x_{\vartheta}$ (for more details see for example~\cite{brette1}*{Section~2.5}). Therefore, for simplicity, in this paper we will always assume that $x_{\vartheta}=1$ and $x_r=0$.

Since isolated spikes of a given neuron look alike, often it is assumed that the form of the action potential does not carry any information, but rather, it is the structure of the spike train\footnote{A \emph{spike train} is a chain of action potentials emitted by a single neuron.} which matters. Therefore, the idea is to study the properties (and in particular, dynamics) of two special maps associated with the integrate-and-fire models, which carry the information about the distribution of spikings in time, namely, the firing map $\Phi$ and its displacement $\Psi$ (cf.~Definitions~\ref{firingdlalokalnie}~and~\ref{displacement map}). It turns out, for example, that the rotation number of a given point with respect to the mapping $\Phi$ corresponds to the average interspike interval, whereas its multiplicative inverse describes the firing rate (for more details see Section~\ref{sec:firing_rate}).

Despite the fact that integrate-and-fire models are commonly used, their dynamical behaviour has been investigated rigorously in only few papers (see for instance \cites{brette1,gedeon,car-ong}) and usually under the assumption that the forcing term $f$ (or, in the case of general integrate-and-fire models of the form $\dot{x}(t)=F(t,x(t))$, the function $t \mapsto F(t,x)$) is periodic or uniformly almost periodic (see Definition~\ref{Bohr alm per} below). Recently, the broader class of stimulating processes than the periodic ones have been analysed in~\cite{jager2011}. On the other hand, the dynamics of bidimensional adaptive integrate-and-fire models under constant input was studied e.g. in~\cite{TB}. Our aim in this paper is to continue the study of one-dimensional  integrate-and-fire models with (almost) periodic inputs carried out in the articles~\cites{wmjs1, MS, js2015}, and to establish several basic properties of the firing map and its displacement corresponding to the model~\eqref{lif4}--\eqref{lif_spiking} driven by Stepanov or $\mu$-almost periodic functions.

It should be emphasised that in this context it seems to be quite natural to consider almost periodic forcing terms. Indeed, many neurons will respond to the current step with a spike train with (eventually) steady state of periodic firing, and therefore, even if the action potential is generated periodically by each neuron at the pre-synaptic level, the signal a given post-synaptic neuron receives may be no longer periodic as a sum of periodic inputs with incommensurable periods; clearly, such a signal is almost periodic (for more details see~\cite{ger}). One of the ways to qualify neuron's response to such signals, is to measure the so-called firing rate and the regularity of the interspike intervals along the generated post-synaptic signal.

In the paper we work mainly with Stepanov and $\mu$-almost periodic functions, because such functions satisfy quite general regularity conditions (for more details see Section~\ref{sec:2}), and thus we are able to cover quite wide range of LIF models, including models with discontinuous inputs. On the other hand, it seems that without any assumption involving some kind of (almost) periodicity of the forcing term, it would be very hard (if not impossible) to provide a complete description of the behaviour of the firing map and its displacement, since, as we mentioned above, these models are (to some extent) periodic in nature.

The paper is organized as follows. In Section~\ref{sec:2} we recall some basic definitions and facts concerning almost periodic functions. Special attention is paid to Stepanov and $\mu$-almost periodic functions. Section~\ref{sec:3} is devoted to the study of the mean value of $\mu$-almost periodic functions. In particular, it is shown that in general the mean value of $\mu$-almost periodic functions may not exist. In Section~\ref{sec:firing_map} firing map and its displacement are investigated. We begin with presenting some general properties of such maps. For example, we provide the conditions under which $\Phi$ and $\Psi$ are well-defined for every $t \in \mathbb R$ and are (uniformly) continuous (see Proposition~\ref{poprawneokr eslenie} and Theorem~\ref{lem:uniform_con_bound}). Then we move on to the discussion of the firing map and its displacement for LIF models driven by almost periodic functions. Among other things, we show that the LIF model with a Stepanov almost periodic input has a uniformly almost periodic displacement map (see Theorem~\ref{new almost periodic displacement}). We also show that there are LIF models driven by $\mu$-almost periodic functions with (uniformly continuous) displacement maps which fail to be $\mu$-almost periodic (see Example~\ref{ex:mu_no_mu}). In Section~\ref{sec:5} an application of the above-mentioned results to the qualitative theory of almost periodic functions is indicated. Section~\ref{sec:firing_rate} contains a result  on the existence and uniqueness of the firing rate for  LIF models (see Theorem~\ref{thm:LIF_rotation} below), which generalizes analogous result for PI models from~\cite{brette1} and~\cite{wmjs1}. Let us add that, although it seems that the transition from $\sigma=0$ to $\sigma \geq 0$ should be straightforward, it is not and the  proof of Theorem~\ref{thm:LIF_rotation} is based on the result concerning the dynamics of mappings of the real line with almost periodic displacements established by J.~Kwapisz in \cite{kwapisz}. We end the main part of the paper with Section~\ref{sec:7} dealing with the approximation of Stepanov almost periodic functions with Haar wavelets. In the Appendix we return to the mean value of (uniformly) almost periodic functions and present some remarks on its relationship with the antiderivative of such functions. Therefore, apart from our interest in integrate-and-fire models, this work provides a few contributions to the theory of almost periodic functions.

\section{Limit-periodic and almost periodic functions}
\label{sec:2}

In this section we fix notation and recall some basic definitions and facts concerning limit-periodic and almost periodic functions, which will be needed in the sequel.

\begin{notation}
Throughout the paper by $L^0(\mathbb R)$ we will denote the family of all equivalence classes of real-valued Lebesgue measurable functions defined on $\mathbb R$. Furthermore, by $L^p_{\text{loc}}(\mathbb R)$, where $p \in [1,+\infty)$, we will denote the family of all equivalence classes of real-valued functions defined on $\mathbb R$ which are locally Lebesgue integrable with $p$-th power. Very often, by abuse of notation, we will refer to elements of the families $L^0(\mathbb R)$ and $L^p_{\text{loc}}(\mathbb R)$ as functions and we will simply write $f \in L^0(\mathbb R)$ or $f \in L^p_{\text{loc}}(\mathbb R)$. The Lebesgue measure on $\mathbb R$ will be denoted by $\mu$. Given a function $f\colon \mathbb{R}\to \mathbb{R} $ by $f^{\tau}$, where $\tau \R$, we will denote the function $f^{\tau}\colon \mathbb{R}\to \mathbb{R}$ defined by the formula $f^{\tau}(t)=f(t+\tau)$ for $t\R$. 
\end{notation}

At the beginning of this section let us recall the notion of a limit-periodic function.

\begin{definition}\label{def limit periodic}
A function $f\colon \mathbb{R}\to\mathbb{R}$ is called \emph{limit-periodic} (in the sense of Bohr) if it is the limit of a uniformly convergent sequence $(f_n)_{n \in \mathbb N}$ of continuous periodic functions\footnote{Of course, by definition, we assume that all the functions $f_n$ are defined on $\mathbb R$.}.
\end{definition}

\begin{remark}
Clearly, every continuous periodic function is limit-periodic. On the other hand, the function $f \colon \mathbb R \to \mathbb R$ defined by the formula
\[
  f(t)=\sum_{k=1}^{\infty}\frac{1}{k^2}\cos\Bigl(\frac{2\pi t}{2^k}\Bigr) \qquad \text{for $t \in \mathbb R$},
\]
is an example of a limit-periodic function which is not periodic (cf.~\cite{ref}).
\end{remark}

Although there are various classes of almost periodic functions known in the literature (see for example~\cites{andres, BD, BG} and the references therein), in this paper we are going to deal with only three of them; namely, we are going to consider functions almost periodic in the sense of Bohr and Stepanov as well as functions almost periodic in the Lebesgue measure. Let us begin with the definition of a relatively dense set.

\begin{definition}
A set $A \subseteq \mathbb R$ is said to be \emph{relatively dense} if there exists a positive number $l$ such that the intersection $A\cap (x,x+l)$ is non-empty for every $x \in \mathbb R$. In this case, any number $l$ with this property is said to characterise the relative density of the set $A$.
\end{definition}

\begin{definition}\label{Bohr alm per}
A continuous function $f \colon \mathbb{R}\to\mathbb{R}$ is called \emph{almost periodic in the sense of Bohr} (or \emph{uniformly almost periodic}) if for any $\ep>0$ the set of all $\ep$-almost periods of $f$, defined as
\[
 E\{\ep,f\} \zdef \Bset{\tau \in \mathbb R}{\sup_{t \in \mathbb R}\abs{f^{\tau}(t)-f(t)} < \ep}, 
\]
is relatively dense.
\end{definition}

\begin{remark}
Equivalently, Bohr almost periodic functions can be defined as uniform limits of sequences of generalized trigonometric polynomials $P_n(t)=\sum_{j=1}^{k(n)}\bigl(a_j\sin( \lambda_j t)+b_j\cos( \lambda_j t)\bigr)$, where $a_j, b_j\in\mathbb{R}$ and $\lambda_j\in\mathbb{R}$ (see~\cite{corduneanu}). Let us also add that Bohr almost periodic functions are uniformly continuous and bounded (see~\cites{bezykowicz,corduneanu}).
\end{remark}

\begin{remark}
The vector space $AP(\mathbb R)$ of all uniformly almost periodic functions is a Banach space when endowed with the supremum norm $\norm{\cdot}_{\infty}$ (see for example~\cites{corduneanu, Fink,andres}).
\end{remark}

\begin{remark}
Any limit-periodic function is uniformly almost periodic. On the other hand, the class of all limit periodic functions is identical with the class of all uniformly almost periodic functions all whose Fourier exponents are rational multiples of the same number (see~\cite{bezykowicz}*{Theorem, p.~34}). Thus, for instance, the function $f\colon \mathbb R \to \mathbb R$ defined by the formula $f(t)=\cos(\pi t)+\cos(\sqrt{5}t)$, $t \in \mathbb R$, is an example of a Bohr almost periodic function which is not limit-periodic. 
\end{remark}

Now, let us pass to the definition of an almost periodic function in the sense of Stepanov.

\begin{notation}
Given $r>0$ and $p \in [1,+\infty)$, for a function $f \in L^p_{\text{loc}}(\mathbb R)$ let
\[
 \norm{f}_{S^p_r} \zdef  \sup_{t\in\mathbb{R}}\left(\frac{1}{r}\int_{t}^{t+r}\abs{f(u)}^p\, \textup du\right)^{1/p}.
\]
Let us also observe that for every $r_1,r_2>0$ there exist $a,b >0$ such that $a\norm{f}_{S^p_{r_1}} \leq \norm{f}_{S^p_{r_2}}\leq b\norm{f}_{S^p_{r_1}}$, and therefore in the sequel we will assume that $r=1$.
\end{notation}

\begin{definition}\label{stepanov alm per}
Let $p \in [1, +\infty)$. A function $f\in L^{p}_{\text{loc}}(\mathbb{R})$ is called \emph{$S^p$-almost periodic} if for any $\ep>0$ the set of all $(S^p,\ep)$-almost periods of $f$, defined as $S^pE\{\ep,f\} \zdef \bset{\tau \in \mathbb R}{\norm{f^{\tau}-f}_{S^p_1} <\varepsilon}$, is relatively dense.
\end{definition}

\begin{remark}\label{rem:AP_sap}
Clearly, any uniformly almost periodic function is $S^p$-almost periodic for every $p\in [1,+\infty)$. However, the continuous function $f \colon \mathbb R \to \mathbb R$ defined by the formula
\[
 f(t) = \sin \biggl(\frac{1}{2+\cos(t)+\cos(\sqrt{2}t)}\biggr) \qquad \text{for $t \in \mathbb R$},
\]
is an example of a $S^1$-almost periodic function which is not uniformly almost periodic (see for example~\cite{Levitan}*{p.~212} or~\cite{Stoinski}*{Example~2.2, p.~56}). Another simple example of a $S^p$-almost periodic function with $p \in [1,+\infty)$, which is not uniformly almost periodic, is given by the following formula $f(t)=a(-1)^{[\lambda t -\gamma]}$, where $a, \lambda > 0$, $\gamma \in \mathbb R$ and $[\,\cdot\,]$ denotes the \emph{entier} function. Finally, let us add that if a $S^p$-almost periodic function is uniformly continuous, then it is Bohr almost periodic (see~\cite{corduneanu}*{Theorem~6.16, p.~174} or~\cite{bezykowicz}*{Theorem, p.~81}).
\end{remark}

\begin{definition}
Let $p \in [1,+\infty)$. A function $f \in L^p_{\text{loc}}(\mathbb R)$ is said to be \emph{$S^p$-bounded} if $\norm{f}_{S^p_1} < +\infty$.
\end{definition}

\begin{remark}\label{rem:sp_bounded}
It is known that every $S^p$-almost periodic function, where $p \in [1,+\infty)$, is $S^p$-bounded (see~\cite{Stoinski}*{Theorem~2.1} or~\cite{Levitan}*{Theorem~5.2.2}). 
\end{remark}

\begin{remark}\label{rem:s_p_s_1}
The space $S^p(\mathbb R)$ of all $S^p$-almost periodic functions (equivalence classes) endowed with the norm $\norm{\cdot}_{S^p_1}$ is a Banach space (see~\cite{andres}). This space can also be obtained as the closure of the set of all generalized trigonometric polynomials in the Banach space $\set{f \in L^p_{\text{loc}}(\mathbb R)}{\norm{f}_{S^p_1}<+\infty}$ with respect to the norm $\norm{\cdot}_{S^p_1}$. Let us add that different values of $r>0$ give rise to different norms $\norm{\cdot}_{S^p_r}$ on $S^p(\mathbb R)$, but all of them generate the same topology. Finally, let us recall that if $f$ is a $S^{p_1}$-almost periodic function, then it is also $S^{p_2}$-almost periodic for $p_2 \leq p_1$ (for more details see for example~\cites{andres,bezykowicz}).
\end{remark}

Before we recall the definition of a function almost periodic  in the Lebesgue measure we need to introduce the following 

\begin{notation}
For $\eta>0$ and $f, g \in L^0(\mathbb R)$ let $D(\eta; f, g)\zdef \sup_{u \in \mathbb{R}}\mu\bigl(\set{t \in [u, u+1]}{\abs{f(t)-g(t)}\geq \eta}\bigr)$.
\end{notation}

\begin{remark}\label{lem:szacowanie_mu}
Clearly, if $f,g \in L^0(\mathbb R)$, then for every $\eta >0$ we have
\[
 D(\eta; f,g) \leq  2\sup_{z \in \mathbb Z} \mu\bigl(\set{t \in [z,z+1]}{\abs{f(t)-g(t)}\geq \eta}\bigr) \leq 2D(\eta; f,g).
\]
\end{remark}

\begin{definition}[\cites{St1, S}]\label{def4}
A function $f\in L^0(\mathbb R)$ is said to be \emph{almost periodic in the Lebesgue measure~$\mu$} (or \emph{$\mu$-almost periodic}) if for arbitrary numbers $\eps, \eta>0$ the set of all $(\eps, \eta)$-almost periods of $f$, defined as $\mu E\{\eps, \eta, f\}\zdef \bset{\tau \R}{D(\eta; f^{\tau},f) \leq \eps}$, is relatively dense.
\end{definition}

It turns out that the set of all $(\eps, \eta)$-almost periods of a $\mu$-almost periodic function is, in a sense, `quite big' as evidenced by the following proposition, whose proof we omit, since it is technical and follows from~\cite{St1}*{Lemma, p.~195} along the same lines as, for example,~\cite{Levitan}*{Theorem~5.2.4} or~\cite{Stoinski}*{Theorem~2.2}.

\begin{proposition}\label{prop:rel_dens_e_z}
If $f$ is $\mu$-almost periodic, then for every $\varepsilon, \eta >0$ the set $\mu E\{\varepsilon, \eta, f\} \cap \mathbb Z$ is relatively dense.
\end{proposition}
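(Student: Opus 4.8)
The plan is to deduce the relative density of $\mu E\{\varepsilon,\eta,f\}\cap\mathbb Z$ from the relative density of the full set $\mu E\{\varepsilon',\eta',f\}$ of almost periods, for suitably chosen auxiliary parameters $\varepsilon'<\varepsilon$ and $\eta'<\eta$. The point is the standard ``rounding trick'': if $\tau$ is an honest $(\varepsilon',\eta')$-almost period, then the nearest integer $[\tau]$ (or $[\tau]+1$) is an $(\varepsilon,\eta)$-almost period, provided we can control how much the quantity $D(\cdot\,;f^{\tau},f)$ changes when $\tau$ is perturbed by an amount less than $1$. The auxiliary device that makes this quantitative is precisely the cited result \cite{St1}*{Lemma, p.~195}, which (in the manner of the Levitan/Stoi\'nski arguments referenced) gives uniform control of $D(\eta;f^{\tau_1},f^{\tau_2})$ in terms of $D(\cdot\,;f^{\tau_1-\tau_2},f)$, together with a ``continuity in the shift'' property of $\mu$-almost periodic functions allowing one to replace a real shift by an integer shift at the cost of shrinking the tolerances.

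First I would fix $\varepsilon,\eta>0$ and invoke the lemma from \cite{St1} to choose $\delta>0$ and $\eta_0\in(0,\eta)$ such that, for any $\mu$-almost periodic $f$, a shift of size at most $1$ changes the relevant measure-of-bad-set quantity by at most $\varepsilon/2$ at tolerance level $\eta$: concretely, for all $s$ with $\abs{s}\le 1$ one has $D(\eta;f^{s},f)\le \varepsilon/2$ whenever one works at the appropriate smaller tolerance $\eta_0$; this uses that $\mu$-almost periodic functions are, loosely speaking, continuous in the $D$-pseudometric with respect to translations by bounded amounts. Second, I would apply Definition~\ref{def4} with parameters $\varepsilon/2$ and $\eta_0$ to obtain a relatively dense set $\mu E\{\varepsilon/2,\eta_0,f\}$, say with relative density characterised by some $l>0$. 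Third, for each real $\tau$ in this set write $\tau=[\tau]+\{\tau\}$ with $\{\tau\}\in[0,1)$; then by the triangle-type inequality for $D$ (which follows at once from subadditivity of Lebesgue measure, exactly as in Remark~\ref{lem:szacowanie_mu}'s style of estimate), $D(\eta;f^{[\tau]},f)\le D(\eta_0;f^{[\tau]},f^{\tau})+D(\eta_0;f^{\tau},f)$, and the first term equals $D(\eta_0;f^{-\{\tau\}},f)\le\varepsilon/2$ by the first step (since $\abs{\{\tau\}}<1$), while the second is $\le\varepsilon/2$ by choice of $\tau$; hence $[\tau]\in\mu E\{\varepsilon,\eta,f\}\cap\mathbb Z$. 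Finally, since $\abs{\tau-[\tau]}<1$, the set of such integer almost periods is relatively dense with relative density characterised by $l+1$.

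The main obstacle is the first step: making precise and justifying the claim that a translation by a bounded amount perturbs $D(\eta;f^{\tau},f)$ by a controlled amount, uniformly over the class of $\mu$-almost periodic functions, which is exactly what \cite{St1}*{Lemma, p.~195} supplies and why the proposition is quoted as ``technical''. One must be a little careful that the $D$-pseudometric does not satisfy an exact triangle inequality at a single tolerance $\eta$ — passing through an intermediate tolerance $\eta_0<\eta$ (so that $\{\abs{f^{[\tau]}-f}\ge\eta\}\subseteq\{\abs{f^{[\tau]}-f^{\tau}}\ge\eta_0\}\cup\{\abs{f^{\tau}-f}\ge\eta_0\}$) is what repairs this. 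Everything else — the rounding, the subadditivity estimates, and the bookkeeping on relative density constants — is routine, which is why the authors omit the details and refer to the analogous arguments in \cite{Levitan}*{Theorem~5.2.4} and \cite{Stoinski}*{Theorem~2.2}.
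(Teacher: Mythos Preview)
Your first step is where the argument breaks. You assert that the cited lemma lets you choose $\eta_0$ so that $D(\eta;f^{s},f)\le\varepsilon/2$ for \emph{every} $s$ with $\abs{s}\le 1$. This is false in general: take $f(t)=\sin(\pi t)$, which is continuous, $2$-periodic and hence certainly $\mu$-almost periodic. For $s=\tfrac12$ one has $\abs{f(t+\tfrac12)-f(t)}=\sqrt{2}\,\abs{\sin(\pi t-\tfrac{\pi}{4})}$, so for any small $\eta$ the set where this exceeds $\eta$ fills almost all of each unit interval; thus $D(\eta;f^{1/2},f)$ is close to $1$, not to $0$. What \cite{St1}*{Lemma, p.~195} supplies is only $D$-continuity at $0$: for every $\varepsilon,\eta>0$ there is $\delta>0$ (typically much smaller than $1$) such that $\abs{s}<\delta$ implies $D(\eta;f^{s},f)<\varepsilon$. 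Consequently the ``round $\tau$ to $[\tau]$'' step fails, because the fractional part $\{\tau\}$ of a generic almost period need not be smaller than $\delta$.

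The standard repair---and this is the content of the Levitan/Stoi\'nski arguments the paper points to---is an extra pigeonhole step that manufactures a bounded finite family of reference almost periods whose fractional parts are $\delta$-dense among the fractional parts that actually occur. Concretely: take $\delta$ from $D$-continuity at level $(\varepsilon/3,\eta/2)$, set $M>1/\delta$, and in $[0,(M{+}1)l]$ pick $\sigma_0,\dots,\sigma_M\in\mu E\{\varepsilon/6,\eta/4,f\}$, one per subinterval of length $l$. For an arbitrary $x$, choose $\tau\in\mu E\{\varepsilon/6,\eta/4,f\}\cap[x,x+l]$; among the $M{+}2$ numbers $\{\tau\},\{\sigma_0\},\dots,\{\sigma_M\}$ two lie in the same subinterval of $[0,1)$ of length $1/M<\delta$, and one may arrange (by first thinning the $\sigma_j$ so that their fractional parts lie in pairwise distinct subintervals) that the coincidence involves $\tau$ and some $\sigma_j$. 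Then $\tau-\sigma_j$ differs from an integer $n$ by less than $\delta$; a quasi-triangle inequality at levels $\eta/2,\eta/4$ shows $\tau-\sigma_j\in\mu E\{\varepsilon/3,\eta/2,f\}$, and $D$-continuity then puts $n\in\mu E\{\varepsilon,\eta,f\}$. Since $n$ lies in $[x-(M{+}1)l-1,\,x+l+1]$, the integer almost periods are relatively dense. Your outline captures the quasi-triangle inequality correctly but skips this reduction from ``shift by $\{\tau\}\in[0,1)$'' to ``shift by something $<\delta$'', which is the actual work.
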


\begin{remark}\label{rem:rel_dens_e_z}
A result analogous to Proposition~\ref{prop:rel_dens_e_z} in the case of $S^1$-almost periodic functions is also true.
\end{remark}

\begin{remark}\label{rem:mu_S}
It can be easily shown that every $S^p$-almost periodic function is $\mu$-almost periodic. On the other hand, the function $f \colon \mathbb R \to \mathbb R$ given by
\[
 f(t) = \frac{1}{2+\cos(t)+\cos(\sqrt{2}t)} \qquad \text{for $t \in \mathbb R$},
\]
is an example of a continuous (and thus locally integrable) $\mu$-almost periodic function which is not $S^p$-almost periodic for any $p \in [1,+\infty)$ (for more details see~\cite{B-N}). %\marginnote{\scriptsize Dodac szczegoly cytowania.}
However, if a $\mu$-almost periodic function is (essentially) bounded, then it is $S^p$-almost periodic for every $p \in [1,+\infty)$ (see~\cite{St1}*{Theorem~7} or~\cite{Stoinski}*{Theorem~4.11}).
\end{remark}

In the sequel we will also need the notion of a $D$-convergence.

\begin{definition}[\cite{St1}]\label{def6}
A sequence $(f_n)_{n \in \mathbb N}$, where $f_n \in L^0(\mathbb R)$ for $n\N$, is said to be \emph{$D$-convergent} to a function $f \in L^0(\mathbb R)$, if the following condition is satisfied 
\[
\forall \eps>0 \quad \forall \eta>0 \quad \exists N\N \quad \forall n>N \quad D(\eta; f_n, f)<\eps. 
\]
The function $f$ is said to be the \emph{$D$-limit} of the sequence $(f_n)_{n \in \mathbb N}$. 
\end{definition}

\begin{remark}\label{rem:D_zbieznosc2}
If $f \in L^0(\mathbb R)$ is the $D$-limit of a $D$-convergent sequence of $\mu$-almost periodic functions, then $f$ is $\mu$-almost periodic (see~\cite{St1}*{Theorem~6}).
\end{remark}

\begin{remark}\label{rem:D_zbieznosc}
It turns out that the $D$-convergence is metrizable. Indeed, in~\cite{St1} Stoi\'nski proved that a sequence $(f_n)_{n \in \mathbb N}$, where $f_n \in L^0(\mathbb R)$ for $n\N$, is $D$-convergent to $f \in L^0(\mathbb R)$ if and only if $\lim_{n \to \infty} \absp{f_n-f}=0$, where
\[
 \absp{f}\zdef \sup_{u \in \mathbb R} \int_u^{u+1} \dfrac{\abs{f(s)}}{1+\abs{f(s)}}\textup ds.
\]
In the same paper it was also shown (although not explicitly stated) that the functional $\absp{\cdot}$ is a complete $F$-norm on the vector space $M(\mathbb R)$ of all $\mu$-almost periodic functions. 
\end{remark}

\section{Mean value}
\label{sec:3}

In this section, we recall the concept of the mean value of an almost periodic function and we discuss its properties. Special attention is paid to the mean value of $\mu$-almost periodic functions.

Before passing to further considerations let us recall the following

\begin{definition}
Let $f \in L^1_{\text{loc}}(\mathbb R)$. The limit 
\[
 \mathcal{M}\{f\}:=\lim_{T \to +\infty} \frac{1}{T} \int_{0}^{T} f(s) \textup ds
\]
(whenever it exists) is called the \emph{mean value} of the function $f$.
\end{definition}

\begin{remark}\label{rem:mean_value_existence}
If $f$ is an almost periodic function in the sense of Bohr or Stepanov (with any $ 1 \leq p < +\infty$), then the mean value $\mathcal{M}\{f\}$ exists and is finite (see for example~\cite{Zaidman}*{Theorem~1, p.~85} or~\cite{bezykowicz}*{Theorem on p.~12 and Corollary~1 on p.~93}).
\end{remark}

The following example shows that the mean value of a continuous $\mu$-almost periodic function may not exist.

\begin{example}\label{ex:muap_nonexistence}
Let $A_n=4^n\mathbb Z+2^n$, $B_n=A_{2^n}$ and $a_n=(n+1)^2\cdot 2^{2^n}$ for $n\in \mathbb N$. Moreover, for a given $n \in \mathbb N$ let the function $f_n \colon \mathbb R\to \mathbb R$ be defined by the following formula 
\[
 f_n (x)= \begin{cases}
        a_nx-a_n\bigl(z+\tfrac{1}{2}-\frac{1}{n+1}\bigr)& \quad \text{for $x\in \bigl[z+\frac{1}{2}-\frac{1}{n+1}, z+\frac{1}{2}\bigr), z\in B_{n}$},\\[2mm]
         -a_nx+a_n\bigl(z+\tfrac{1}{2}+\frac{1}{n+1}\bigr)  & \quad \text{for $x\in \bigl[z+\frac{1}{2}, z+\frac{1}{2}+\frac{1}{n+1}\bigr), z\in B_{n}$},\\[2mm]
0& \quad \text{for other $x\in \mathbb R$}.
       \end{cases}
\]
It can be shown that the function $f \colon \mathbb R \to \mathbb R$ given by
\[
  f(x)=\sum_{n=1}^{\infty}f_n(x) \qquad \text{for $x \in \mathbb R$} 
\]
is continuous and $\mu$-almost periodic (cf.~\cite{B-N}*{Example 8} and Example~\ref{ex:mu_no_mu} below). 

For every $n \in \mathbb N$ we have
\[
\frac{1}{2^{2^n}}\int_{0}^{2^{2^n}}f_n(x)\textup dx=0 \quad \text{and}\quad \frac{1}{2^{2^n}+1}\int_{0}^{2^{2^n}+1}f_n(x)\textup dx=\frac{2^{2^n}}{2^{2^n}+1}.
\]
On the other hand, since the function $f_n$ is $4^{2^n}$-periodic, for $T\geq 4^{2^n}$ we have
\[
\frac{1}{T}\int_{0}^Tf_n(x)\textup dx\leq \frac{1}{[\frac{T}{4^{2^n}}]\cdot 4^{2^n}}\int_0^{\bigl(\bigl[\frac{T}{4^{2^n}}\bigr]+1\bigr)\cdot 4^{2^n}}f_n(x)\textup dx=\frac{[\frac{T}{4^{2^n}}]+1}{[\frac{T}{4^{2^n}}]}\cdot\frac{2^{2^n}}{4^{2^n}}\leq \frac{2}{2^{2^n}}.
\]
If $1 \leq k\leq n-1$, then $2^{2^n}\geq 4^{2^k}$. Therefore,
\[
\frac{1}{2^{2^{n}}}\int_0^{2^{2^{n}}}f(x)\textup dx=\sum_{k=1}^{n-1}\frac{1}{2^{2^{n}}}\int_0^{2^{2^{n}}}f_k(x)\textup dx\leq \sum_{k=1}^{n-1}\frac{2}{2^{2^k}}\leq 2\sum_{k=1}^{\infty}\frac{1}{2^{2k}}= \frac{2}{3} \quad \text{for $n \geq 2$}.
\]
Finally, for $n\N$ we also have
\[
\frac{1}{2^{2^{n}}+1}\int_0^{2^{2^{n}}+1}f(x)\textup dx\geq \frac{1}{2^{2^{n}}+1}\int_0^{2^{2^{n}}+1}f_n(x)\textup dx= \frac{2^{2^n}}{2^{2^{n}}+1}.
\]
Hence the mean value $\mathcal M\{f\}$ does not exist.
\end{example}

Observe that the function $f$ from Example~\ref{ex:muap_nonexistence} is not $S^1$-bounded. Let us also add that it is possible to construct an example of a continuous $\mu$-almost periodic function $f$ which is not $S^1$-bounded and whose mean value exists but is infinite, that is, $\mathcal M\{f\}=+\infty$. Therefore, a natural question arises whether every locally integrable $\mu$-almost periodic function which is not $S^1$-bounded fails to have finite mean value. The answer to this question is provided by the following example. 

\begin{example}\label{ex:3.4}
Let $A_n=2\cdot 3^n\mathbb Z-3^n$ for $n\in \mathbb N$. Let us put
\[
 f_n (x)= \begin{cases}
        n^2& \text{for $x\in [z, z+\frac{1}{n}), z\in A_{n}$},\\[2mm]
        0& \text{for other $x\in \mathbb R$}.
       \end{cases}
\]
As in Example~\ref{ex:muap_nonexistence} define a locally integrable $\mu$-almost periodic function $f \colon \mathbb R \to \mathbb R$ by the formula
\[
 f(x)=\sum_{n=1}^{\infty}f_n(x) \qquad \text{for $x \in \mathbb R$}
\]
(cf. also~\cite{B-N}*{Example 8} and Example~\ref{ex:mu_no_mu} below). Note that, since 
\[
\int_{z}^{z+1}f(x)\textup dx\geq \int_z^{z+1}f_n(x)\textup dx=n\quad \text{for $z\in A_n$},
\]
the function $f$ is not $S^1$-bounded, and hence cannot be $S^1$-almost periodic (see Remark~\ref{rem:sp_bounded}). However, as we will show below, its mean value exists and 
\[
 \mathcal{M}\{f\}=\lim_{T\to +\infty}\frac{1}{T}\int_0^T f(x) \textup dx=\sum_{n=1}^{\infty}\frac{n}{2\cdot 3^n}.
\]

First, observe that for every $n \in \mathbb N$ the function $f_n$ is $(2\cdot 3^n)$-periodic, and therefore
\[
  \lim_{T\to +\infty}\frac{1}{T}\int_0^T f_n(x) \textup dx= \frac{1}{2\cdot 3^n}\int_0^{2\cdot 3^n}f_n(x) \textup dx=\frac{n}{2\cdot 3^n}
\]
(cf.~\cite{Zaidman}*{Remark, p.~88}). Furthermore, for every $T>0$ we have
\[
 \frac{1}{T}\int_0^T f_n(x) \textup dx\leq \frac{n}{3^n}.
\]
Indeed, if $0 < T \leq 3^n$, then
\[
 \frac{1}{T}\int_0^T f_n(x) \textup dx=0\leq  \frac{n}{3^n}; 
\]
if $3^n<T<2\cdot 3^n$, then
\[
 \frac{1}{T}\int_0^T f_n(x)\textup dx\leq \frac{1}{3^n}\int_0^{2\cdot 3^n}f_n(x) \textup dx=\frac{n}{3^n};
\]
and finally, if $T\geq 2\cdot 3^n$, then
\[
 \frac{1}{T}\int_0^T f_n(x) \textup dx \leq \frac{1}{2\cdot 3^n \bigl[\frac{T}{2\cdot 3^n}\bigr]}\int_0^{2\cdot 3^n\bigl(\bigl[\frac{T}{2\cdot 3^n}\bigr]+1\bigr)}f_n(x) \textup dx=  \frac{\bigl[\frac{T}{2\cdot 3^n}\bigr]+1}{\bigl[\frac{T}{2\cdot 3^n}\bigr]}\frac{n}{2\cdot 3^n}\leq \frac{n}{3^n}.
\]
Given $\eps>0$ let $k\in \mathbb N$ be such that
\[
 \sum_{n=k+1}^{\infty}\frac{n}{3^n}<\frac{1}{3}\eps,
\]
and choose $T_0>0$ such that for $T \geq T_0$ we have
\[
 \bgabs{\frac{1}{T}\int_{0}^T\sum_{n=1}^k f_n(x) \textup dx-\sum_{n=1}^k\frac{n}{2\cdot3^n}}<\frac{1}{3}\eps.
\]
Then, for $T \geq T_0$ we have
\begin{multline*}
 \bgabs{\frac{1}{T}\int_{0}^Tf(x) \textup dx-\sum_{n=1}^{\infty}\frac{n}{2\cdot3^n}}=\bgabs{\frac{1}{T}\int_{0}^T\sum_{n=1}^{\infty}f_n(x) \textup dx-\sum_{n=1}^{\infty}\frac{n}{2\cdot3^n}}\leq\\[2mm] 
 \leq \bgabs{\frac{1}{T}\int_{0}^T\sum_{n=1}^k f_n(x) \textup dx-\sum_{n=1}^k\frac{n}{2\cdot3^n}}+\frac{1}{T}\int_{0}^T\sum_{n=k+1}^{\infty}f_n(x) \textup dx+\sum_{n=k+1}^{\infty}\frac{n}{2\cdot3^n}\leq\\[2mm] 
 \leq \frac{1}{3}\eps+\sum_{n=k+1}^{\infty}\frac{n}{3^n}+\sum_{n=k+1}^{\infty}\frac{n}{3^n}\leq \eps
\end{multline*}
(let us note that we could change the order of summation and integration in the above estimate, since on each interval $[0,T]$ only finitely many functions $f_n$ do not vanish; cf.~also~\cite{HSt}*{Theorem~12.21}). This shows that 
\[
 \mathcal{M}\{f\}=\sum_{n=1}^{\infty}\frac{n}{2\cdot 3^n}.
\]
\end{example}

Now we are going to provide a sufficient condition guaranteeing that the mean value of a locally integrable $\mu$-almost periodic functions exists. However, first we need the following

\begin{definition}
For $f\colon \mathbb R \to \mathbb R$ and $N>0$ we define the \emph{truncated function} $f_N$ corresponding to the function $f$ by the formula $f_N(x)=\max\bigl(-N, \min(f(x),N)\bigr)$ for $x \in \mathbb R$. 
%\[
%f_N(x)=\begin{cases}
%f(x) \quad & \text{for $x\R$ such that $\abs{f(x)}\leq N$},\\[2mm]
%\frac{f(x)}{\abs{f(x)}}N\quad &  \text{for $x\R$ such that $\abs{f(x)}>N$}.
%\end{cases}
%\]
\end{definition}

\begin{theorem}\label{thm:muap_mean}
Let $f \in L^1_{\text{loc}}(\mathbb R)$ be a non-negative \textup(almost everywhere\textup) $\mu$-almost periodic function. If for every $\eps>0$ there exist positive numbers $T_0$ and $N_0$ such that
 \[
  \frac{1}{T}\int_{0}^T \bigl(f(x)-f_{N_0}(x)\bigr)\textup dx <\eps \qquad \text{for $T \geq T_0$},
 \]
then $\mathcal M\{f\}$ exists and is finite, and moreover $\mathcal M\{f\}=\lim_{N\to +\infty}\mathcal M\{f_N\}$.
\end{theorem}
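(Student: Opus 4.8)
The plan is to trap $\mathcal{M}\{f\}$ between the $\liminf$ and the $\limsup$ of its Cesàro averages, using the truncations $f_N$ (which, since $f\ge0$ a.e., are just $f_N=\min(f,N)$) as a bridge. \emph{First} I would check that each $f_N$ is again $\mu$-almost periodic: the real function $s\mapsto\min(s,N)$ is $1$-Lipschitz, so $\abs{f_N^{\tau}(t)-f_N(t)}\le\abs{f^{\tau}(t)-f(t)}$ for a.e.\ $t$ and every $\tau$; consequently $\set{t\in[u,u+1]}{\abs{f_N^{\tau}(t)-f_N(t)}\ge\eta}\subseteq\set{t\in[u,u+1]}{\abs{f^{\tau}(t)-f(t)}\ge\eta}$, hence $D(\eta;f_N^{\tau},f_N)\le D(\eta;f^{\tau},f)$ and $\mu E\{\eps,\eta,f\}\subseteq\mu E\{\eps,\eta,f_N\}$. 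Since the former set is relatively dense, so is the latter, i.e.\ $f_N$ is $\mu$-almost periodic in the sense of Definition~\ref{def4}. As $f_N$ is moreover bounded, Remark~\ref{rem:mu_S} makes it $S^1$-almost periodic, and then Remark~\ref{rem:mean_value_existence} guarantees that $\mathcal{M}\{f_N\}$ exists and is finite.

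\emph{Next} I would record monotonicity: since $0\le f_{N_1}\le f_{N_2}\le f$ a.e.\ for $N_1\le N_2$, the Cesàro averages are monotone in the integrand, so $0\le\mathcal{M}\{f_{N_1}\}\le\mathcal{M}\{f_{N_2}\}$; thus $N\mapsto\mathcal{M}\{f_N\}$ is non-decreasing and $M\zdef\lim_{N\to+\infty}\mathcal{M}\{f_N\}$ exists in $[0,+\infty]$. From $f\ge f_N$ a.e.\ we also obtain $\liminf_{T\to+\infty}\frac{1}{T}\int_0^T f(x)\,\textup dx\ge\mathcal{M}\{f_N\}$ for every $N$, hence $\liminf_{T\to+\infty}\frac{1}{T}\int_0^T f(x)\,\textup dx\ge M$.

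\emph{Then} the hypothesis enters. Applying it with $\eps=1$ produces $T_0,N_0>0$ with $\frac{1}{T}\int_0^T f<\frac{1}{T}\int_0^T f_{N_0}+1$ for $T\ge T_0$; letting $T\to+\infty$ gives $\limsup_{T\to+\infty}\frac{1}{T}\int_0^T f(x)\,\textup dx\le\mathcal{M}\{f_{N_0}\}+1<+\infty$, so in particular $M<+\infty$. Repeating the same estimate with an arbitrary $\eps>0$ gives $\limsup_{T\to+\infty}\frac{1}{T}\int_0^T f(x)\,\textup dx\le\mathcal{M}\{f_{N_0(\eps)}\}+\eps\le M+\eps$, and $\eps\to0^+$ yields $\limsup_{T\to+\infty}\frac{1}{T}\int_0^T f(x)\,\textup dx\le M$. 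Together with the lower bound already obtained, $\limsup\le M\le\liminf\le\limsup$, so all these quantities coincide; therefore $\mathcal{M}\{f\}$ exists, is finite, and equals $M=\lim_{N\to+\infty}\mathcal{M}\{f_N\}$.

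The argument is short and has no genuinely hard step; the two points that need a little care are verifying that truncation preserves $\mu$-almost periodicity (so that the cited mean-value results apply to \emph{every} $f_N$, not just to functions that are a priori Stepanov almost periodic) and the bookkeeping of finiteness — which is why one invokes the hypothesis once with $\eps=1$ before running the final squeeze with general $\eps$.
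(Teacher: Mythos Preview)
Your proof is correct and follows essentially the same strategy as the paper's: both first verify (via Remark~\ref{rem:muap_mean} in the paper, via the $1$-Lipschitz truncation argument in your write-up) that each $\mathcal M\{f_N\}$ exists, then invoke the hypothesis with $\eps=1$ to show that $M=\lim_{N\to+\infty}\mathcal M\{f_N\}$ is finite, and finally squeeze the Ces\`aro averages of $f$ against $M$. The only cosmetic difference is that the paper finishes with an explicit $\eps/3$ triangle-inequality estimate for $\bigl|\tfrac{1}{T}\int_0^T f - M\bigr|$, whereas you phrase the same squeeze as $\limsup\le M\le\liminf$; the ingredients and the logic are identical.
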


Before we proceed to the proof of Theorem~\ref{thm:muap_mean}, several remarks are in order.

\begin{remark}\label{rem:muap_mean}
Let us observe that if $f$ is a $\mu$-almost periodic function, then the truncated function $f_N$ is $\mu$-almost periodic for every $N>0$ (cf.~\cite{St1}*{the proof of Theorem~9} and see~\cite{St2}*{p.~172}), and since it is also bounded, we infer that $f_N$ is $S^p$-almost periodic for every $p \in [1,+\infty)$ (cf.~Remark~\ref{rem:mu_S}). In particular, the mean value $\mathcal M\{f_N\}$ exists and is finite (see Remark~\ref{rem:mean_value_existence}). If, in addition, the function $f$ is assumed to be almost everywhere non-negative, then the mapping $N \mapsto \mathcal M\{f_N\}$ is non-decreasing and the limit $\lim_{N\to +\infty}\mathcal M\{f_N\}$ exists (we do not exclude here that the limit is equal to $+\infty$).
\end{remark}

\begin{proof}[Proof of Theorem~\textup{\ref{thm:muap_mean}}]
In view of the assumption, for $\eps=1$ and $N\geq N_0$ we have 
\[
  \frac{1}{T}\int_{0}^T \bigl(f_N(x)-f_{N_0}(x)\bigr)\textup dx\leq \frac{1}{T}\int_{0}^T \bigl(f(x)-f_{N_0}(x)\bigr)\textup dx <1, \qquad \text{whenever $T \geq T_0$}.
 \]
Thus, by Remark~\ref{rem:muap_mean}, we infer that the limit $m \zdef \lim_{N\to +\infty}\mathcal M\{f_N\}$ exists and is finite. 

Now, we will show that $\mathcal{M}\{f\}=m$. Given $\eps>0$ there exists $N_1>0$ such that $0\leq m-\mathcal M\{f_{N}\}< \frac{1}{3}\eps$ for $N\geq N_1$. Furthermore, in view of the assumption, there exist $T_1, N_2>0$ such that
\[
 \frac{1}{T}\int_{0}^T \bigl(f(x)-f_{N}(x)\bigr) \textup dx\leq  \frac{1}{T}\int_{0}^T \bigl(f(x)-f_{N_2}(x)\bigr) \textup dx <\frac{1}{3}\eps \qquad \text{for $T\geq T_1$ and $N\geq N_2$}.
\]

Let $N_3=\max{(N_1, N_2)}$. Then, since the function $f_{N_3}$ is $S^1$-almost periodic (cf.~Remark~\ref{rem:muap_mean}), the mean value $\mathcal M\{f_{N_3}\}$ exists and is finite, and therefore there is $T_2>0$ such that 
\[
\bgabs{\frac{1}{T}\int_{0}^T f_{N_3}(x) \textup dx - \mathcal M\{f_{N_3}\}} < \frac{1}{3}\eps \qquad \text{for $T\geq T_2$}.
\]
Hence, for $T\geq \max{(T_1, T_2)}$ we have
\begin{equation*}
 \bgabs{\frac{1}{T}\int_{0}^T\! f(x) \textup dx-m}  \leq \frac{1}{T}\int_{0}^T\! \bigl(f(x)-f_{N_3}(x)\bigr) \textup dx+ \bgabs{\frac{1}{T}\int_{0}^T\! f_{N_3}(x) \textup dx-\mathcal M\{f_{N_3}\}}+m-\mathcal M\{f_{N_3}\}<\eps,
\end{equation*}
which shows that $\mathcal M\{f\}=m$ and ends the proof.
\end{proof}

\begin{remark}
Let us note that if a locally integrable function $f \colon \mathbb R \to \mathbb R$ has finite mean value $\mathcal M\{f\}$, then for every $\alpha \in \mathbb R$ we have
\begin{equation}\label{eq:srednia_alpha}
\lim_{T\to +\infty}\frac{1}{T}\int_\alpha^{\alpha+T}f(x) \textup dx=\mathcal M\{f\}.
\end{equation}
It is also known that for functions almost periodic in the sense of Bohr or Stepanov (with any $p \in [1,+\infty)$), the limit in~\eqref{eq:srednia_alpha} exists uniformly in $\alpha \in \mathbb R$ (see~\cite{Levitan}*{Theorem~1.3.2  and Theorem~5.6.2} or~\cite{Stoinski}*{Theorem~1.9 and Theorem~2.16}). However, this result is no longer true, if we consider functions which are $\mu$-almost periodic; to see this it suffices to consider the function $f$ defined in Example~\ref{ex:3.4}.
\end{remark}

Now, we will proceed to the study of further properties of the mean value, which will be needed in the sequel.

\begin{proposition}[cf.~\cite{wmjs1}*{Lemma~3.7}]\label{o sredniej dla nieujemnej}
Let $f \in S^1(\mathbb R)$ be almost everywhere non-negative. Then $\mathcal{M}\{f\}=0$ if and only if $f(t)=0$ for almost all $t\in\mathbb{R}$.
\end{proposition}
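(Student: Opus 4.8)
The plan is to prove the non-trivial direction: assuming $f \in S^1(\mathbb R)$ is almost everywhere non-negative with $\mathcal M\{f\} = 0$, deduce that $f = 0$ almost everywhere. (The converse is immediate, since $f = 0$ a.e. gives $\frac{1}{T}\int_0^T f = 0$ for all $T > 0$.) First I would record the key regularity input: because $f$ is $S^1$-almost periodic, by Remark~\ref{rem:sp_bounded} it is $S^1$-bounded, and more importantly, by Remark~\ref{rem:s_p_s_1} (or Remark~\ref{rem:rel_dens_e_z}) it has a relatively dense set of $(S^1,\eps)$-almost periods, in fact with almost periods in $\mathbb Z$. This will let me ``transport'' the smallness of $\int_0^T f$ on an initial segment to smallness of $\int_I f$ on arbitrary unit intervals $I$.

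The main step is to show that $\mathcal M\{f\} = 0$ forces $\sup_{u \in \mathbb R}\int_u^{u+1} f(x)\,\textup dx = 0$. To see this, fix $\eps > 0$. Since $\mathcal M\{f\} = 0$, for $T$ large we have $\frac{1}{T}\int_0^T f < \eps$; combined with non-negativity this shows $\frac{1}{b-a}\int_a^b f$ is small for long intervals $[a,b]$ near the origin, and using~\eqref{eq:srednia_alpha}-type reasoning (valid since the mean exists) one gets $\frac{1}{T}\int_\alpha^{\alpha+T} f < \eps$ for every $\alpha$ once $T \geq T(\alpha)$. Now take any unit interval $[u, u+1]$. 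Choose an $(S^1, \delta)$-almost period $\tau \in \mathbb Z$ of $f$ lying in a window that pushes $u$ near a point where the initial-segment average is already controlled; then
\[
 \int_u^{u+1} f(x)\,\textup dx \leq \int_{u-\tau}^{u-\tau+1} f(x)\,\textup dx + \int_{u-\tau}^{u-\tau+1}\abs{f^{\tau}(x) - f(x)}\,\textup dx \leq \int_{u-\tau}^{u-\tau+1} f(x)\,\textup dx + \norm{f^{\tau} - f}_{S^1_1},
\]
and the last term is $< \delta$. Averaging $\int_0^N f$ over a long block of such translated unit intervals and using non-negativity shows each $\int_{u-\tau}^{u-\tau+1} f$ cannot exceed essentially the long-block average, which tends to $\mathcal M\{f\} = 0$. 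Making $\eps, \delta \to 0$ yields $\int_u^{u+1} f = 0$ for every $u$.

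Finally, from $\int_u^{u+1} f(x)\,\textup dx = 0$ for all $u \in \mathbb R$ together with $f \geq 0$ a.e., covering $\mathbb R$ by such intervals gives $\int_{\mathbb R} f = 0$, hence $f = 0$ almost everywhere, as claimed. The main obstacle I anticipate is the bookkeeping in the transport step: one must carefully choose the almost period $\tau \in \mathbb Z$ so that $u - \tau$ (or rather a long block starting near it) falls within the range where the initial-segment averages $\frac{1}{N}\int_0^N f$ are already $< \eps$, and simultaneously control the accumulated $S^1$-errors over the whole block — this requires quantifying ``relatively dense'' against the length of the block. An alternative, cleaner route (which I would actually prefer if the paper is willing to invoke it) is to note that $g(u) \zdef \int_u^{u+1} f(x)\,\textup dx$ is a Bohr almost periodic function of $u$ (the indefinite integral of an $S^1$-a.p. function over a fixed window is uniformly a.p.), is non-negative, and has mean value $\mathcal M\{g\} = \mathcal M\{f\} = 0$; a non-negative continuous almost periodic function with zero mean is identically zero, so $g \equiv 0$ and we conclude as before. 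I would present the argument via whichever of these is most consistent with the tools already developed in the paper.
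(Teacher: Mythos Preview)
Your proposal is correct, but it does considerably more than the paper: the paper's own proof of this proposition is a two-line stub, handling the trivial implication and deferring the substantive direction entirely to~\cite{wmjs1}*{Lemma~3.7}. So there is no argument in the paper to compare your approach against at this point.

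That said, your second route---defining $g(u)\zdef\int_u^{u+1}f(x)\,\textup dx$, observing that $g$ is a non-negative Bohr almost periodic function with $\mathcal M\{g\}=\mathcal M\{f\}=0$, and concluding $g\equiv 0$---is clean, correct, and the most natural way to do it with the tools already in place. Your first route (the direct transport of smallness via almost periods) is also sound in outline, and if you want to see that style of argument executed carefully, look at the proof of Theorem~\ref{thm:zero} just below: there the authors prove the stronger $\mu$-almost periodic version by exactly this kind of relatively-dense-translate argument, showing that a set where $f\geq\eta>0$ of measure $\varepsilon>0$ propagates along almost periods to force a positive lower bound on block averages. That argument specializes immediately to the $S^1$ case and would serve equally well here.
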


\begin{proof}
It is obvious that $f\equiv 0$ a.e. implies $\mathcal{M}\{f\}=0$. The remaining part was proved in~\cite{wmjs1}.
\end{proof}

Clearly, if in Proposition~\ref{o sredniej dla nieujemnej} the function $f$ is uniformly almost periodic, then the equality $\mathcal{M}\{f\}=0$ implies that $f(t)=0$ for every $t \in \mathbb R$.

In the case of $\mu$-almost periodic functions we have the following

\begin{theorem}\label{thm:zero}
Let $f \colon \mathbb R \to \mathbb R$ be a locally integrable $\mu$-almost periodic function which is almost everywhere non-negative. Then  $\mathcal{M}\{f\}=0$ if and only if $f(t)=0$ for almost all $t\in\mathbb{R}$.
\end{theorem}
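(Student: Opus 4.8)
The plan is to reduce the statement to the already-settled $S^1$-almost periodic case, namely Proposition~\ref{o sredniej dla nieujemnej}, by exhausting $f$ from below with its truncations $f_N$. The ``if'' direction requires no work: if $f(t)=0$ for almost every $t$, then $\frac1T\int_0^T f(x)\,\textup dx=0$ for every $T>0$, and hence $\mathcal M\{f\}=0$.

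For the ``only if'' direction I would assume $\mathcal M\{f\}=0$ and fix $N>0$. First I would invoke Remark~\ref{rem:muap_mean}: the truncation $f_N$ is a bounded $\mu$-almost periodic function, hence $S^1$-almost periodic, so that $\mathcal M\{f_N\}$ exists and is finite (cf.~Remark~\ref{rem:mean_value_existence}); moreover $f_N$ is almost everywhere non-negative, so $\mathcal M\{f_N\}\geq 0$. Next I would use that $f\geq 0$ a.e.\ to write $0\leq f_N(x)=\min(f(x),N)\leq f(x)$ for almost every $x\in\mathbb R$, integrate over $[0,T]$, divide by $T$ and let $T\to+\infty$; this yields $\mathcal M\{f_N\}\leq\mathcal M\{f\}=0$, and therefore $\mathcal M\{f_N\}=0$.

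At this point Proposition~\ref{o sredniej dla nieujemnej}, applied to the non-negative $S^1$-almost periodic function $f_N$, gives $f_N(x)=0$ for almost all $x\in\mathbb R$. To conclude I would note that, since $f\geq 0$ a.e.\ and $f_N(x)=\min(f(x),N)$ for a.e.\ $x$, it already suffices to take $N=1$: the equality $\min(f(x),1)=0$ forces $f(x)=0$ whenever $f(x)\geq 0$, so $f(x)=0$ for almost every $x\in\mathbb R$ (equivalently, one may let $N\to+\infty$ and use $f_N\uparrow f$ a.e.). This finishes the argument.

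I do not expect a genuine obstacle here: essentially all the content sits in the single reduction step, and the two points that need care --- the existence of $\mathcal M\{f_N\}$ and the transfer of the pointwise inequality $f_N\leq f$ to the mean values --- are routine, being handled respectively by Remark~\ref{rem:mean_value_existence}, Remark~\ref{rem:muap_mean} and the monotonicity of the integral.
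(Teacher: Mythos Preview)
Your argument is correct, and it is genuinely different from the paper's. The paper argues directly by contradiction: assuming $f$ does not vanish a.e., it locates a set $A\subseteq[u,u+1]$ of positive measure on which $f\geq\eta>0$, then uses integer $(\tfrac{\eps}{2},\tfrac{\eta}{2})$-almost periods $\tau_n$ to propagate this lower bound to intervals $[2(n-1)\omega,2n\omega]$, obtaining $\frac{1}{2n\omega}\int_0^{2n\omega}f\geq\frac{\eps\eta}{8\omega}>0$ and hence a contradiction with $\mathcal M\{f\}=0$.

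Your reduction via truncation is shorter and more structural: it leverages the already-proven $S^1$ case (Proposition~\ref{o sredniej dla nieujemnej}) together with the fact that bounded $\mu$-almost periodic functions are $S^1$-almost periodic (Remark~\ref{rem:mu_S}). The paper's direct approach, by contrast, is self-contained and does not depend on the external reference~\cite{wmjs1} that underlies Proposition~\ref{o sredniej dla nieujemnej}; moreover, the same propagation-of-mass estimate is reused verbatim later in the proof of Proposition~\ref{prop:mu-phi}, so the authors get additional mileage from writing it out explicitly here.
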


\begin{proof}
Since the sufficiency part is obvious, let us assume that $\mathcal{M}\{f\}=0$ and suppose that $f$ does not vanish almost everywhere. Then, there exist a point $u\R$, together with positive numbers $\eps, \eta$ and a Lebesgue measurable set $A\subseteq [u, u+1]$ with $\mu(A)=\eps$ such that  $f(x)\geq \eta$ for a.e. $x\in A$. For every $n\N$ choose $\tau_n \in (2(n-1)\omega-u, 2(n-1)\omega-u+\omega)\cap \mu E\{\frac{\eps}{2}, \frac{\eta}{2}, f\}$, where $\omega$ is a number which characterizes the relative density of the set $\mu E\{\frac{\eps}{2}, \frac{\eta}{2},f\}$ (clearly, we may assume that $\omega>1$). Then
\[
\mu\bigl(\set{x \in [u, u+1]}{\abs{f(x+\tau_n)-f(x)}<\tfrac{\eta}{2}}\bigr)\geq 1-\frac{\eps}{2},
\]
and thus
\[
\mu\bigl(\set{x \in A}{f(x+\tau_n)\geq \tfrac{\eta}{2}}\bigr)\geq \mu\bigl(\set{x \in A}{\abs{f(x+\tau_n)-f(x)}< \tfrac{\eta}{2}}\bigr)\geq \frac{\eps}{2}.
\]
So, we have
\[
 \int_{2(n-1)\omega}^{2n\omega}f(x) \textup dx\geq \frac{\eps \eta}{4} \qquad \text{for $n\N$},
\]
and hence
\[
\frac{1}{2n\omega}\int_{0}^{2n\omega}f(x) \textup dx\geq \frac{\eps \eta}{8\omega} \qquad \text{for $n \in \mathbb N$}.
\]
This contradicts the fact that $\mathcal M\{f\}=0$.
\end{proof}

\begin{remark}
The difference between non-negative $\mu$-almost periodic functions and non-negative $S^p$-almost periodic functions is the following: for $S^p$-almost periodic functions we have $\mathcal M\{f\}=0$ or $\mathcal M\{f\}>0$, while for $\mu$-almost periodic functions from the negation of the condition $\mathcal M\{f\}=0$ it does not follow that $\mathcal M\{f\}>0$, since the mean value may not exist.
\end{remark}

\begin{example}
Let us note that, in general, $\mathcal{M}\{f\}=0$ does not imply that $f(t)=0$ a.e. on $\mathbb R$, even if $f$ is non-negative. Let us take for instance the function $f\colon \mathbb R\to \mathbb R$ defined by the formula $f(t)=\ue^{-t^2/2}$ for $t \in \mathbb R$. Then, for every $T>0$, we have
\[
 \frac{1}{T}\int_{0}^T f(u)\; \textup du \leq \frac{1}{T}\int_{\mathbb R} f(u)\; \textup du = \frac{\sqrt{2\pi}}{T}, 
\]
which shows that $\mathcal{M}\{f\}=0$, even though $f(t)>0$ for every $t\in \mathbb{R}$.
\end{example}

\begin{remark}
Let us observe that the mean value $\mathcal{M}$ is a continuous functional on the space $AP(\mathbb R)$ or $S^p(\mathbb R)$ with $p \in [1,+\infty)$, which means that $\mathcal{M}\{f_n\}\to\mathcal{M}\{f\}$ if $f_n\to f$ in the corresponding almost-periodic norm, since $\abs{\mathcal M\{f-g\}} \leq \mathcal M\{\abs{f-g}\}\leq \norm{f-g}_{\infty}$ for $f, g\in AP(\mathbb R)$ and $\abs{\mathcal M\{f-g\}} \leq \mathcal M\{\abs{f-g}\} \leq \norm{f-g}_{S^p_1}$ for $f, g \in S^p(\mathbb R)$.
\end{remark}

The next example shows, however, that, if a sequence $(f_n)_{n \in \mathbb N}$ of $\mu$-almost periodic functions is $D$-convergent to $f$, then it may happen that $\mathcal M\{f_n\}\not\to \mathcal M\{f\}$, even if $\mathcal M\{f\}$ exists and is finite.

\begin{example}
For every $n \in \mathbb N$ let
\[
 f_n (x)= \begin{cases}
        n& \text{for $x\in [z, z+\frac{1}{n}), z\Z$},\\[2mm]
        0& \text{for other $x\R$},
       \end{cases}
\]
and let us note that the functions $f_n$ are locally integrable and 1-periodic, and thus $\mu$-almost periodic.

It is easy to show that the sequence $(f_n)_{n \in \mathbb N}$ is $D$-convergent to the zero function $f$. However, $\mathcal M\{f_n\}\not\to \mathcal M\{f\}$, since $\mathcal M\{f_n\}=1 $ for $n\N$ and $\mathcal M\{f\}=0$.
\end{example}

\section{Firing map and its displacement}
\label{sec:firing_map}

The following section is devoted to the study of the firing map and the displacement map corresponding to the LIF model~\eqref{lif4}--\eqref{lif_spiking}. Starting in Subsection~\ref{subsection:4.1} with the discussion of some general properties of the above-mentioned maps, we then move to the investigation of the firing map and its displacement for the LIF model driven by Stepanov and $\mu$-almost periodic functions.

\subsection{General properties of the firing map and its displacement}
\label{subsection:4.1}

Since throughout the rest of the paper we will consider leaky integrate-and-fire models with a locally integrable almost periodic input, first we need to rewrite the definition of the firing map for that setting.

\begin{definition}\label{firingdlalokalnie}
Let $f \in L^1_{\text{loc}}(\mathbb R)$. The \emph{firing map} $\Phi$ corresponding to the system~\eqref{lif4}--\eqref{lif_spiking} is defined as
\begin{equation*}%\label{uwiklanenafiringdlalif}
\Phi(t) \zdef \inf\bgset{t_*>t}{\ue^{\sigma t}\leq \int_t^{t_*}\bigl(f(u)-\sigma\bigr)\ue^{\sigma u}\; \textup du}, \qquad t \in \mathbb R.
\end{equation*}
\end{definition}

\begin{remark}
Let us observe that if by $x(\cdot; t,0)$ we denote the solution of~\eqref{lif4} originating from the point $(t,0)$, then, equivalently, the value of the firing map at $t$ may be defined by the formula $\Phi(t) = \inf\set{t_\ast > t}{x(t_\ast;t,0) \geq 1}$. Therefore, roughly speaking, the firing map assigns to each point $t \in \mathbb R$ the time $\Phi(t)$ at which the trajectory of~\eqref{lif4} originating from $(t,0)$ reaches the threshold.
\end{remark}

\begin{example}\label{ex:phi_f_piecewise_constant}
Let $\sigma=1$ and let us consider the LIF model~\eqref{lif4}--\eqref{lif_spiking} driven by the locally integrable $2$-periodic function $f \colon \mathbb R \to \mathbb R$ given by
\[
 f(t) = \begin{cases}
         2 \quad & \text{for $t \in [2k,2k+1)$, $k \in \mathbb Z$},\\
				 1 \quad & \text{for $t \in [2k+1,2k+2)$, $k \in \mathbb Z$}.
				\end{cases}
\]
It can be checked that the firing map $\Phi$ corresponding to such a model has the formula
\[
 \Phi(t) = \begin{cases}
            \ln(2e^t) \quad & \text{for $t \in [2k,2k+1-\ln{2}]$, $k \in \mathbb Z$},\\
						\ln(2e^t + e^{2k+2} - e^{2k+1}) \quad & \text{for $t \in (2k+1-\ln{2}, 2k+1)$, $k \in \mathbb Z$},\\
						\ln(e^t + e^{2k+2}) \quad & \text{for $t \in [2k+1,2k+2)$, $k \in \mathbb Z$}.
						\end{cases}
\]
\end{example}

It may happen that the firing map $\Phi$ is not well-defined for every $t \in \mathbb R$, meaning that for some $\tau \in \mathbb R$ the set appearing in the definition of $\Phi(\tau)$ is empty. However, we have the following known result which describes necessary and sufficient conditions for $\Phi$ to be well-defined for every $t \in \mathbb R$; for Readers' convenience we will provide its proof, since the proof presented in~\cite{MS} contains a minor gap.

\begin{proposition}[\cite{MS}*{Lemma~2.2}]\label{poprawneokr eslenie}
Let $f \in L^1_{\text{loc}}(\mathbb R)$. The firing map $\Phi$ corresponding to the system~\eqref{lif4}--\eqref{lif_spiking} is well-defined for every $t \in \mathbb R$ if and only if
\begin{equation}\label{warunek}
\limsup_{t\to+\infty}\int_0^t\bigl(f(u)-\sigma\bigr)\ue^{\sigma u}\; \textup du=+\infty.
\end{equation}
\end{proposition}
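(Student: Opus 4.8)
The plan is to analyze the defining inequality $\ue^{\sigma t}\leq \int_t^{t_*}\bigl(f(u)-\sigma\bigr)\ue^{\sigma u}\,\ud u$ and show that the existence of a $t_*>t$ satisfying it, for \emph{every} starting point $t$, is controlled by the growth of the single antiderivative $G(t)\zdef\int_0^t\bigl(f(u)-\sigma\bigr)\ue^{\sigma u}\,\ud u$. First I would rewrite the condition: since $\int_t^{t_*}\bigl(f(u)-\sigma\bigr)\ue^{\sigma u}\,\ud u=G(t_*)-G(t)$, the set in Definition~\ref{firingdlalokalnie} is non-empty at $t$ if and only if there exists $t_*>t$ with $G(t_*)\geq G(t)+\ue^{\sigma t}$. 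Thus $\Phi$ is well-defined for every $t\in\mathbb R$ precisely when, for each $t$, the function $G$ eventually reaches the level $G(t)+\ue^{\sigma t}$ on $(t,+\infty)$.

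For the sufficiency direction, I would assume \eqref{warunek}, i.e. $\limsup_{t\to+\infty}G(t)=+\infty$. Fix any $t\in\mathbb R$; the target level $G(t)+\ue^{\sigma t}$ is a fixed finite number, and since $\limsup_{s\to+\infty}G(s)=+\infty$ there is $s$ (which we may take $>t$) with $G(s)>G(t)+\ue^{\sigma t}$, so the set is non-empty and $\Phi(t)$ is finite. Note $G$ is continuous (being an absolutely continuous integral of an $L^1_{\text{loc}}$ function), so actually the infimum is attained and equality holds at $t_*=\Phi(t)$, though for this proposition we only need non-emptiness.

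For the necessity direction, I would argue by contraposition: suppose $\limsup_{t\to+\infty}G(t)=:M<+\infty$. Then $G$ is bounded above on $[0,+\infty)$ by some constant, say $G(s)\leq M'$ for all $s\geq 0$. Now pick $t\geq 0$ large enough that $\ue^{\sigma t}>M'-\min_{[0,t]}G$ — more cleanly, since $\ue^{\sigma t}\to+\infty$ (using $\sigma>0$; the case $\sigma=0$ needs separate handling, see below) while $G(t)$ stays within a bounded band, one can choose $t$ with $G(t)+\ue^{\sigma t}>M'\geq\sup_{s\geq t}G(s)$, so no $t_*>t$ satisfies $G(t_*)\geq G(t)+\ue^{\sigma t}$ and hence $\Phi(t)$ is undefined. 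The one genuinely delicate point is the perfect-integrator case $\sigma=0$: there $\ue^{\sigma t}\equiv 1$ and condition~\eqref{warunek} reads $\limsup_{t\to+\infty}\int_0^t(f(u))\,\ud u=+\infty$ wait — with $\sigma=0$ it is $\limsup_{t\to+\infty}\int_0^t f(u)\,\ud u=+\infty$, and the defining inequality becomes $1\leq G(t_*)-G(t)$; one checks that if $\limsup G=M<+\infty$ then choosing $t$ with $G(t)>M-1$ (possible since $G(t)$ comes arbitrarily close to $M$ along the $\limsup$ sequence) makes the level $G(t)+1>M\geq\sup_{s>t}G(s)$ unreachable. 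So the main obstacle is not a deep estimate but rather bookkeeping the two directions carefully and treating $\sigma=0$ and $\sigma>0$ uniformly; I would present the argument through $G$ so that both cases fall out of the same inequality $G(t_*)\geq G(t)+\ue^{\sigma t}$, emphasizing that in the unbounded case the fixed finite target is eventually exceeded, and in the bounded case a suitable large $t$ produces an unreachable target.
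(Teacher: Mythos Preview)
Your sufficiency direction is correct and coincides with the paper's argument.

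In the necessity direction your contrapositive approach has a genuine gap. From $\limsup_{t\to+\infty} G(t)=:M<+\infty$ you correctly deduce that $G$ is bounded above on $[0,+\infty)$, but the claim that ``$G(t)$ stays within a bounded band'' is unjustified: nothing prevents $G(t)\to-\infty$. For instance, take $\sigma=1$ and $f(u)=1-2\ue^{u}$; then $(f(u)-\sigma)\ue^{\sigma u}=-2\ue^{2u}$, so $G(t)=1-\ue^{2t}\to-\infty$ and in fact $G(t)+\ue^{\sigma t}=1-\ue^{2t}+\ue^{t}\to-\infty$, so your ``pick $t$ large'' step never produces $G(t)+\ue^{\sigma t}>M'$. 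The same lacuna appears in your $\sigma=0$ treatment: ``choose $t$ with $G(t)>M-1$'' tacitly assumes $M\in\mathbb R$, but $M$ may equal $-\infty$. The contrapositive argument can be repaired by observing that whenever $\limsup G<+\infty$ there exist arbitrarily large $t$ with $\sup_{s\geq t}G(s)-G(t)<1\leq\ue^{\sigma t}$ (take $t$ along a sequence realizing the $\limsup$ if $M\in\mathbb R$, or at a point where $G$ attains its maximum over $[t,+\infty)$ if $M=-\infty$); at any such $t$ the level $G(t)+\ue^{\sigma t}$ is unreachable.

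The paper avoids this case analysis altogether by proving necessity directly rather than by contraposition. Assuming $\Phi$ is everywhere defined, it telescopes the implicit relation $\ue^{\sigma t}=\int_t^{\Phi(t)}(f(u)-\sigma)\ue^{\sigma u}\,\textup du$ along the orbit $0,\Phi(0),\Phi^2(0),\ldots$ to obtain
\[
\int_0^{\Phi^n(0)}\bigl(f(u)-\sigma\bigr)\ue^{\sigma u}\,\textup du=\sum_{i=1}^n\ue^{\sigma\Phi^{i-1}(0)}\geq n,
\]
and then shows $\Phi^n(0)\to+\infty$ by a short contradiction using local integrability of $f$. This yields $\limsup G=+\infty$ without distinguishing $\sigma=0$ from $\sigma>0$ and without needing any lower-bound information on $G$.
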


Before we pass to the proof of Proposition~\ref{poprawneokr eslenie}, let us observe that if the value $\Phi(t)$ is well-defined for some $t \in \mathbb R$, then it has to satisfy the implicit equation
\begin{equation}\label{implicitnafiringdlalif}
\ue^{\sigma t}=\int_{t}^{\Phi(t)} \bigl(f(u)-\sigma\bigr)\ue^{\sigma u}\; \textup du.
\end{equation}

\begin{proof}[Proof of Proposition~\textup{\ref{poprawneokr eslenie}}]
Suppose that the condition~\eqref{warunek} is satisfied and fix $t_0\in\mathbb{R}$. Then $\limsup_{t\to+\infty}\int_{t_0}^t \bigl(f(u)-\sigma\bigr)\ue^{\sigma u}\; \textup du=+\infty$, and hence there exists $t_* > t_0$ such that $\int_{t_0}^{t_*}\bigl(f(u)-\sigma\bigr)\ue^{\sigma u}\; \textup du\geq \ue^{\sigma t_0}$. Consequently, $\Phi(t_0)$ is defined.

Now, let us assume that $\Phi \colon \mathbb{R}\to\mathbb{R}$ is well-defined for every $t\in\mathbb{R}$. In particular, by~\eqref{implicitnafiringdlalif}, for $t=0$ and every $n \in \mathbb N$ we have
\[
 \int_0^{\Phi^n(0)} \bigl( f(u) - \sigma\bigr) \ue^{\sigma u}\; \textup du = \sum_{i=1}^n \int_{\Phi^{i-1}(0)}^{\Phi^i(0)} \bigl( f(u) - \sigma\bigr) \ue^{\sigma u}\; \textup du = \sum_{i=1}^n \ue^{\sigma \Phi^{i-1}(0)} \geq n;
\]
here $\Phi^n$ denotes the $n$-th iterate of $\Phi$ and, by definition, we set $\Phi^0(0)=0$. Moreover, let us observe that the increasing sequence $\bigl(\Phi^n(0)\bigr)_{n \in \mathbb N}$ is unbounded, since otherwise we would have $n \leq \int_0^a \abs{f(u)-\sigma}\ue^{\sigma u}\; \textup du <+\infty$ for $n \in \mathbb N$ and some $a \in (0,+\infty)$ which is independent of $n$, and a contradiction with the local integrability of $f$ would follow. Thus $\lim_{n \to \infty} \Phi^n(0)=+\infty$ and $\lim_{n\to\infty}\int_0^{\Phi^n(0)}\bigl(f(u)-\sigma\bigr)\ue^{\sigma u}\; \textup du=+\infty$, which proves the claim. 
\end{proof}

\begin{remark}\label{rem:Phi_w_d_2}
Let us note that a slight modification of the proof of Proposition~\ref{poprawneokr eslenie} shows that if $f \in L^1_{\text{loc}}(\mathbb R)$, then the firing map $\Phi$ corresponding to the LIF model~\eqref{lif4}--\eqref{lif_spiking} is well-defined on $\mathbb R$ if and only if for some $t \in \mathbb R$ all the iterates $\Phi^n(t)$ are well-defined.
\end{remark}

 From Proposition~\ref{poprawneokr eslenie} we get the following corollary, which in the case of the PI model was stated in~\cite{wmjs1}.

\begin{corollary}\label{cor:l1-phi}
Let $f \in L^1_{\text{loc}}(\mathbb R)$ and suppose that the mean value $\mathcal M\{f\}$ exists\footnote{We do not exclude the case when the mean value $\mathcal M\{f\}$ is infinite.}. If $\mathcal M\{f\} >\sigma$, then the firing map $\Phi$ corresponding to the LIF model~\eqref{lif4}--\eqref{lif_spiking} is well-defined for every $t \in \mathbb R$.
\end{corollary}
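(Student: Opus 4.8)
The plan is to deduce Corollary~\ref{cor:l1-phi} directly from Proposition~\ref{poprawneokr eslenie} by showing that the hypothesis $\mathcal M\{f\}>\sigma$ forces the condition~\eqref{warunek}. So the first step is to rewrite the integral $\int_0^t\bigl(f(u)-\sigma\bigr)\ue^{\sigma u}\,\textup du$ appearing in~\eqref{warunek} in a form that exposes the mean value of $f$. The obvious manoeuvre is integration by parts (or an equivalent direct manipulation): writing $g(t)\zdef\int_0^t\bigl(f(u)-\sigma\bigr)\,\textup du$, one has $g(t)/t\to\mathcal M\{f\}-\sigma$ as $t\to+\infty$ by definition of the mean value, and then
\[
 \int_0^t\bigl(f(u)-\sigma\bigr)\ue^{\sigma u}\,\textup du=g(t)\ue^{\sigma t}-\sigma\int_0^t g(u)\ue^{\sigma u}\,\textup du.
\]
The point of this identity is that when $\sigma>0$ the factor $\ue^{\sigma t}$ grows, and $g(t)\sim(\mathcal M\{f\}-\sigma)t$ with $\mathcal M\{f\}-\sigma>0$ (interpreting this as $g(t)\to+\infty$ faster than any linear rate if $\mathcal M\{f\}=+\infty$), so the whole expression should tend to $+\infty$.

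The second step is to make that asymptotic argument rigorous, splitting into the cases $\sigma=0$ and $\sigma>0$. If $\sigma=0$ the claim is immediate: $\int_0^t\bigl(f(u)-0\bigr)\ue^{0}\,\textup du=g(t)$, and $g(t)/t\to\mathcal M\{f\}>0$ gives $\limsup_{t\to+\infty}g(t)=+\infty$, indeed $\lim_{t\to+\infty}g(t)=+\infty$. If $\sigma>0$, fix $\delta$ with $0<\delta<\mathcal M\{f\}-\sigma$ (or, if $\mathcal M\{f\}=+\infty$, fix any $\delta>0$); by definition of the mean value there is $T_0>0$ with $g(u)\geq\delta u$ for all $u\geq T_0$. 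Then for $t\geq T_0$,
\[
 \int_0^t\bigl(f(u)-\sigma\bigr)\ue^{\sigma u}\,\textup du
 \geq \int_{T_0}^t\bigl(f(u)-\sigma\bigr)\ue^{\sigma u}\,\textup du+C_0,
\]
where $C_0\zdef\int_0^{T_0}\bigl(f(u)-\sigma\bigr)\ue^{\sigma u}\,\textup du$ is a finite constant. For the remaining integral I would apply the same integration-by-parts identity on $[T_0,t]$, or more simply bound below using $g$: since $g'(u)=f(u)-\sigma$ in the distributional sense one writes $\int_{T_0}^t\bigl(f(u)-\sigma\bigr)\ue^{\sigma u}\,\textup du=g(t)\ue^{\sigma t}-g(T_0)\ue^{\sigma T_0}-\sigma\int_{T_0}^t g(u)\ue^{\sigma u}\,\textup du$ and then use $g(u)\geq\delta u\geq0$ together with $g(t)\geq\delta t$ to get, after computing $\int_{T_0}^t u\ue^{\sigma u}\,\textup du=\bigl[(u/\sigma-1/\sigma^2)\ue^{\sigma u}\bigr]_{T_0}^t$, a lower bound of the form $\delta t\ue^{\sigma t}-\sigma\delta\bigl(t/\sigma-1/\sigma^2\bigr)\ue^{\sigma t}+(\text{lower-order terms})=(\delta/\sigma)\ue^{\sigma t}+\cdots\to+\infty$. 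Hence~\eqref{warunek} holds, and Proposition~\ref{poprawneokr eslenie} gives that $\Phi$ is well-defined everywhere.

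The main obstacle, and the only place where care is genuinely needed, is the case $\sigma>0$ with $\mathcal M\{f\}$ finite: one must be sure that the exponentially weighted integral still blows up even though the integrand $\bigl(f(u)-\sigma\bigr)\ue^{\sigma u}$ can be very negative on parts of the domain — the key is that $g(u)=\int_0^u(f-\sigma)$ is eventually bounded below by a positive multiple of $u$, which controls the cumulative effect against the exponential weight, and the integration-by-parts identity is exactly what converts the pointwise information about $f$ into information about $g$. I would also note that strict inequality $\mathcal M\{f\}>\sigma$ (rather than $\geq$) is what is used, since it guarantees $g$ grows at a genuinely positive linear rate. An alternative, slightly cleaner route avoiding the explicit constant $C_0$ and the boundary terms is to work directly with the substitution and the relation~\eqref{eq:srednia_alpha}: since $\mathcal M\{f\}$ exists, $\tfrac1T\int_\alpha^{\alpha+T}\bigl(f(u)-\sigma\bigr)\,\textup du\to\mathcal M\{f\}-\sigma>0$, so one can find arbitrarily long intervals on which $\int(f-\sigma)$ is large and positive, and then argue that the exponential weight only amplifies this; but the integration-by-parts approach is the most transparent and I would present that.
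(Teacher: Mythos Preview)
Your integration-by-parts identity
\[
 \int_0^t\bigl(f(u)-\sigma\bigr)\ue^{\sigma u}\,\textup du=g(t)\ue^{\sigma t}-\sigma\int_0^t g(u)\ue^{\sigma u}\,\textup du
\]
is exactly the identity the paper uses, and your treatment of $\sigma=0$ is fine. The gap is in the case $\sigma>0$: you propose to obtain a lower bound by inserting $g(u)\geq\delta u$ into both the boundary term $g(t)\ue^{\sigma t}$ and the integral term $-\sigma\int_{T_0}^t g(u)\ue^{\sigma u}\,\textup du$. But the integral term carries a minus sign, so replacing $g(u)$ by the \emph{lower} bound $\delta u$ produces an \emph{upper} bound for that term, not a lower one. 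Your displayed computation $\delta t\ue^{\sigma t}-\sigma\delta\bigl(t/\sigma-1/\sigma^2\bigr)\ue^{\sigma t}+\cdots$ is therefore not a valid lower bound for $\mathcal N(t)$; without an upper bound on $g(u)$ (which you do not have, especially when $\mathcal M\{f\}=+\infty$), the argument as written does not close. The informal alternative you sketch at the end (``the exponential weight only amplifies this'') runs into the same obstruction: the exponential weight also amplifies the intervals where $f-\sigma$ is very negative.

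The paper resolves this by arguing by contradiction rather than by a direct lower bound. Assuming $\mathcal N(t)\leq n$ for all $t\geq n$, the same identity gives
\[
 g(t)\ue^{\sigma t}\leq n+\sigma\int_0^n g(u)\ue^{\sigma u}\,\textup du+\sigma\int_n^t g(u)\ue^{\sigma u}\,\textup du,
\]
and now Gronwall's inequality applied to $G(t)=g(t)\ue^{\sigma t}$ yields $g(t)\leq C\ue^{-\sigma n}$, i.e.\ $g$ is bounded. This contradicts $g(t)/t\to\mathcal M\{f\}-\sigma>0$. Equivalently, one can invert the identity to write $g(t)=\ue^{-\sigma t}\mathcal N(t)+\sigma\int_0^t\ue^{-\sigma s}\mathcal N(s)\,\textup ds$ and read off directly that boundedness of $\mathcal N$ forces boundedness of $g$. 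Either way, the point is that the information flows naturally from an assumed upper bound on $\mathcal N$ to an upper bound on $g$, not from a lower bound on $g$ to a lower bound on $\mathcal N$; your argument needs to be reorganised along these lines.
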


\begin{proof}
For simplicity, let us set
\[
 \mathcal N(t) = \int_0^t \bigl(f(u)-\sigma\bigr)e^{\sigma u} \textup du \qquad \text{for $t \geq 0$}.
\]
It is easy to see that 
\begin{equation}\label{eq:calkiii}
 \mathcal N(t) = e^{\sigma t}\int_0^t\bigl( f(u) - \sigma\bigr) \textup du - \sigma\int_0^t\biggl(\int_0^u \bigl(f(w)-\sigma\bigr) \textup dw\biggr) e^{\sigma u} \textup du;
\end{equation}
indeed, it suffices to change the order of integration in the second integral on the right-hand side of~\eqref{eq:calkiii}. 

Now, we will show that for every $n \in \mathbb N$ there exists $t_n \geq n$ such that $\mathcal N(t_n)> n$. Suppose on the contrary that there is some $n$ such that 
\begin{equation}\label{eq:gronwall1}
 e^{\sigma t} \int_0^t\bigl(f(u)-\sigma\bigr) \textup du \leq  n + \sigma\int_0^t\biggl(\int_0^u \bigl(f(w)-\sigma\bigr) \textup dw\biggr) e^{\sigma u} \textup du \qquad \text{for all $t \geq n$}.
\end{equation}
Let $g(t):=e^{\sigma t} \int_0^t\bigl(f(u)-\sigma\bigr) \textup du$. Then~\eqref{eq:gronwall1} can be equivalently rewritten as
\[
 g(t) \leq n + \sigma\int_0^n g(u) \textup du + \sigma\int_n^t g(u)\textup du \qquad \text{for all $t \geq n$}. 
\]
Applying Gronwall's inequality (see, for example,~\cite{BS}*{Corollary~1.4}), we infer that
\[
 g(t) \leq \biggl(n+\sigma\int_0^n g(u) \textup du\biggr) e^{\sigma(t-n)} \qquad \text{for $t \geq n$}.
\]
Therefore,
\begin{equation}\label{eq:mean_III}
 \frac{1}{t} \int_0^t \bigl(f(u)-\sigma\bigr)\textup du \leq \frac{1}{t}e^{-\sigma n}\biggl[n+\sigma\int_0^n \biggl(\int_0^u \bigl(f(w)-\sigma\bigr) \textup dw\biggr) e^{\sigma u} \textup du\biggr] \qquad \text{for $t \geq n$}. 
\end{equation}
Passing to the limit in~\eqref{eq:mean_III} with $t \to +\infty$, yields $\mathcal M\{f\}-\sigma\leq 0$. This, however, leads to a contradiction. 

This shows that for every $n \in \mathbb N$ there exists $t_n \geq n$ such that $\mathcal N(t_n)> n$, and thus
\[
 \limsup_{t \to +\infty} \int_0^t \bigl(f(u)-\sigma\bigr)e^{\sigma u} \textup du = +\infty.
\]
To end the proof it suffices now to apply Proposition~\ref{poprawneokr eslenie}.
\end{proof}

\begin{remark}\label{rem:rosnacy}
Let us note that the firing map $\Phi$ corresponding to the system~\eqref{lif4}--\eqref{lif_spiking} may be not well-defined for every $t \in \mathbb R$ even if the function $f \in L^1_{\text{loc}}(\mathbb R)$ is such that $f-\sigma>0$ a.e. on $\mathbb R$; to see this it suffices to consider the function $f \colon \mathbb R \to \mathbb R$ given by $f(t)=\sigma+e^{-(\sigma+1)t}$. However, it should be observed that if $f \in L^1_{\text{loc}}(\mathbb R)$ is such that $f(t)-\sigma \geq 0$ for a.e. $t \in \mathbb R$ and $\Phi(t_0)$ is defined for some $t_0 \in \mathbb R$, then  $\Phi(t)$ is defined for every $t \leq t_0$. For functions $f$ such that the difference $f-\sigma$ is negative on some set of positive Lebesgue measure, the above claim may not hold; to see this it suffices to consider the PI model and the function $f \colon \mathbb R \to \mathbb R$ given by $f(t)=\frac{1}{2}\sin{t}$ for $t \in \mathbb R$, since then we have $\Phi(2\pi)=3\pi$, but $\int_{\frac{\pi}{2}}^t f(u)\textup du <1$ for every $t > \frac{\pi}{2}$, which shows that $\Phi(\frac{\pi}{2})$ is not well-defined.
\end{remark}

In our further considerations we will also need the following simple result on the monotonicity of the firing map (for a similar result for the PI model see~\cite{wmjs1}).

\begin{lemma}\label{lem:rosnie}
Suppose that $f \in L^1_{\text{loc}}(\mathbb R)$ is such that $f(t)-\sigma >  0$ for a.e. $t \in \mathbb R$ and suppose that $\Phi(t_0)$ is defined for some $t_0 \in \mathbb R$, where $\Phi$ is the firing map corresponding to the LIF model~\eqref{lif4}--\eqref{lif_spiking}. Then $\Phi(s)$ is well-defined for every $s<t_0$ and $\Phi(s)<\Phi(t_0)$.
\end{lemma}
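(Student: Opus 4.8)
The goal is to prove Lemma~\ref{lem:rosnie}: assuming $f-\sigma>0$ a.e.\ and that $\Phi(t_0)$ is defined, to show that for every $s<t_0$ the value $\Phi(s)$ is well-defined and $\Phi(s)<\Phi(t_0)$. The first half is already recorded in Remark~\ref{rem:rosnacy}, so strictly speaking I could just cite it, but since this lemma is meant to be self-contained I would give the short argument: define $F(t)\zdef\int_0^t\bigl(f(u)-\sigma\bigr)\ue^{\sigma u}\,\textup du$, which is \emph{strictly increasing} on $\mathbb R$ because $f-\sigma>0$ a.e.\ and $\ue^{\sigma u}>0$. By the implicit equation~\eqref{implicitnafiringdlalif}, $\Phi(t_0)$ being defined means $F(\Phi(t_0))-F(t_0)=\ue^{\sigma t_0}$; equivalently $F(\Phi(t_0))=F(t_0)+\ue^{\sigma t_0}$.

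**Main step.** Fix $s<t_0$. I want to exhibit a witness $t_*>s$ with $\ue^{\sigma s}\leq F(t_*)-F(s)$, and then show the infimum defining $\Phi(s)$ is actually attained strictly below $\Phi(t_0)$. Consider the candidate $t_*=\Phi(t_0)$. Then
\[
F(\Phi(t_0))-F(s) = \bigl(F(\Phi(t_0))-F(t_0)\bigr) + \bigl(F(t_0)-F(s)\bigr) = \ue^{\sigma t_0} + \bigl(F(t_0)-F(s)\bigr).
\]
Since $F$ is strictly increasing and $s<t_0$, we have $F(t_0)-F(s)>0$; and since $\sigma\geq 0$ and $s<t_0$, we have $\ue^{\sigma t_0}\geq \ue^{\sigma s}$. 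Hence $F(\Phi(t_0))-F(s) > \ue^{\sigma s}$, so the set in Definition~\ref{firingdlalokalnie} is non-empty and $\Phi(s)$ is well-defined with $\Phi(s)\leq \Phi(t_0)$. To upgrade this to a \emph{strict} inequality: let $h(t)\zdef F(t)-F(s)$, a continuous strictly increasing function with $h(s)=0$; by~\eqref{implicitnafiringdlalif} the value $\Phi(s)$ is the unique point where $h$ equals $\ue^{\sigma s}$ (uniqueness by strict monotonicity), and we have just shown $h(\Phi(t_0)) > \ue^{\sigma s} = h(\Phi(s))$. Strict monotonicity of $h$ then forces $\Phi(s)<\Phi(t_0)$.

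**Where the care is needed.** The only genuinely delicate point is making sure that $\Phi(s)$, once known to be well-defined, really does satisfy the implicit equation~\eqref{implicitnafiringdlalif} with equality rather than mere inequality — this is where continuity of $F$ (hence of $h$) enters, together with the fact that the infimum of a set of the form $\{t_*>s:\ h(t_*)\geq \ue^{\sigma s}\}$ for continuous strictly increasing $h$ with $h(s)=0<\ue^{\sigma s}$ is attained and gives $h(\Phi(s))=\ue^{\sigma s}$. This is elementary but should be spelled out, since without it one only gets $\Phi(s)\leq\Phi(t_0)$. Everything else is bookkeeping with the additivity of the integral $F$ and the sign of $f-\sigma$. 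I expect no real obstacle; the lemma is essentially a monotonicity statement dressed up through the implicit equation, and the proof is a two-line computation once $F$ is introduced.
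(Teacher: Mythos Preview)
Your proof is correct. The ingredients are the same as the paper's --- the implicit equation~\eqref{implicitnafiringdlalif}, the strict positivity of $f-\sigma$, and the monotonicity $\ue^{\sigma s}\leq \ue^{\sigma t_0}$ --- but the packaging differs. The paper argues by contradiction: it assumes $\Phi(t_0)\leq\Phi(s)$, writes the equality of the two normalised integrals $\int(f-\sigma)\ue^{\sigma(u-t)}\textup du$ at $t=s$ and $t=t_0$, and manipulates to reach $0\leq -\int_s^{t_0}(f(u)-\sigma)\ue^{\sigma(u-s)}\textup du<0$. You instead introduce the strictly increasing antiderivative $F(t)=\int_0^t(f-\sigma)\ue^{\sigma u}\textup du$ and argue directly that $F(\Phi(t_0))-F(s)>\ue^{\sigma s}$, so $\Phi(s)$ exists and, by strict monotonicity of $F$, must lie strictly below $\Phi(t_0)$. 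Your route has the minor advantage of establishing well-definedness of $\Phi(s)$ in the same breath rather than invoking Remark~\ref{rem:rosnacy}, and it avoids the slightly fiddly comparison of the two exponential weights $\ue^{\sigma(u-s)}$ and $\ue^{\sigma(u-t_0)}$ that the paper performs; the paper's version is perhaps more in keeping with the surrounding style, which works throughout with the normalised integrals. Either way the argument is short and your attention to why the infimum is attained with equality in~\eqref{implicitnafiringdlalif} is appropriate.
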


\begin{proof}
First, let us note that in view of Remark~\ref{rem:rosnacy} $\Phi(s)$ is defined for every $s <t_0$.

Now, on the contrary, let us suppose that $\Phi(t_0) \leq \Phi(s)$. Then
\[
 \int_{t_0}^{\Phi(t_0)} \bigl(f(u)-\sigma\bigr)e^{\sigma(u-t_0)} \textup du = \int_s^{\Phi(s)} \bigl(f(u)-\sigma\bigr)e^{\sigma(u-s)} \textup du,
\]
and thus
\begin{align*}
 0 \leq \int_{\Phi(t_0)}^{\Phi(s)} \bigl(f(u)-\sigma\bigr) e^{\sigma(u-s)} \textup du & = \int_{t_0}^{\Phi(t_0)}\bigl(f(u)-\sigma\bigr) e^{\sigma(u-t_0)} \textup du -  \int_s^{\Phi(t_0)}\bigl(f(u)-\sigma\bigr) e^{\sigma(u-s)} \textup du\\[2mm]
 %&  \leq \int_t^{\Phi(t)}\bigl(f(u)-\sigma\bigr) e^{\sigma(u-s)} \textup du -  \int_s^{\Phi(t)}\bigl(f(u)-\sigma\bigr) e^{\sigma(u-s)} \textup du\\[2mm]
 & \leq - \int_s^{t_0} \bigl(f(u)-\sigma\bigr) e^{\sigma(u-s)} \textup du < 0.
\end{align*}
This leads to a contradiction. Therefore, $\Phi(t_0) > \Phi(s)$.
\end{proof}

\begin{remark}
Similar result to Lemma~\ref{lem:rosnie} holds also in the case when $f(t)-\sigma \geq 0$ for a.e. $t \in \mathbb R$. Then, in the claim one needs to replace the strict inequality $\Phi(s)<\Phi(t_0)$ with $\Phi(s) \leq \Phi(t_0)$.
\end{remark}

Since it will be easier to formulate our next result in terms of the displacement map, we proceed now with the following

\begin{definition}\label{displacement map}
Let $f \in L^1_{\text{loc}}(\mathbb R)$ and let $\Phi$ be the firing map corresponding to the system~\eqref{lif4}--\eqref{lif_spiking}. The \emph{displacement map} $\Psi$ of the map $\Phi$ is defined as $\Psi(t) \zdef \Phi(t)-t$.
\end{definition}

Clearly, the displacement map is well-defined only for those $t$'s for which the value $\Phi(t)$ is well-defined. Then of course $\Psi(t)\geq 0$, since $\Phi(t)\geq t$ by definition. Roughly speaking, the value $\Psi(t)$ says how long we have to wait for the next firing if the previous firing was at time $t$.

Our next result describes sufficient conditions for the displacement map to be uniformly continuous. 

\begin{theorem}\label{lem:uniform_con_bound}
Let $\varsigma>0$ and assume that $f \in L^1_{\text{loc}}(\mathbb R)$ satisfies the following conditions\textup:
\begin{enumerate}[itemsep=2pt, label=\textup{(\roman*)}]
  \item\label{it:mu_s1} $\displaystyle \sup_{s \in \mathbb R} \int_s^{s+\delta} f(u) \textup du \to 0$ as $\delta \to 0^+$\textup;

  \item\label{it:mu_s2} $f(t) - \sigma \geq \varsigma$ for a.e. $t \in \mathbb R$.
\end{enumerate}
Then the displacement map $\Psi \colon \mathbb R \to \mathbb R$ corresponding to the LIF model~\eqref{lif4}--\eqref{lif_spiking} is bounded and uniformly continuous. Moreover, $\inf_{t \in \mathbb R}\Psi(t)>0$.
\end{theorem}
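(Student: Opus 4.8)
The plan is to exploit the implicit equation~\eqref{implicitnafiringdlalif} and the lower bound in~\ref{it:mu_s2} to control both $\Psi$ itself and the continuity modulus of $\Phi$. First I would establish that $\Phi$ is well-defined for every $t \in \mathbb R$: since $f(u)-\sigma \geq \varsigma$ a.e., we have $\int_0^t (f(u)-\sigma)\ue^{\sigma u}\,\textup du \geq \varsigma \int_0^t \ue^{\sigma u}\,\textup du \to +\infty$, so~\eqref{warunek} holds and Proposition~\ref{poprawneokr eslenie} applies. Next, for the \emph{boundedness of $\Psi$ from above}: fix $t$ and write $\Phi(t)=t+\Psi(t)$. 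From~\eqref{implicitnafiringdlalif} (after dividing by $\ue^{\sigma t}$),
\[
 1 = \int_t^{t+\Psi(t)} \bigl(f(u)-\sigma\bigr)\ue^{\sigma(u-t)}\,\textup du \geq \varsigma \int_t^{t+\Psi(t)} \ue^{\sigma(u-t)}\,\textup du = \frac{\varsigma}{\sigma}\bigl(\ue^{\sigma \Psi(t)}-1\bigr)
\]
(with the convention that for $\sigma=0$ the right-hand side reads $\varsigma\,\Psi(t)$), which gives a uniform upper bound $\Psi(t) \leq \Psi_{\max}$ independent of $t$.

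For the lower bound $\inf_{t}\Psi(t)>0$, I would use hypothesis~\ref{it:mu_s1}. Again from~\eqref{implicitnafiringdlalif},
\[
 1 = \int_t^{t+\Psi(t)} \bigl(f(u)-\sigma\bigr)\ue^{\sigma(u-t)}\,\textup du \leq \ue^{\sigma \Psi_{\max}} \int_t^{t+\Psi(t)} f(u)\,\textup du + \ue^{\sigma\Psi_{\max}}\cdot\sigma_-\,\Psi(t),
\]
where I bound $\ue^{\sigma(u-t)} \leq \ue^{\sigma\Psi_{\max}}$ on the range of integration and absorb the $-\sigma$ term crudely; more simply, since $f(u)-\sigma \leq f(u) + \sigma$ when $f \geq 0$ is not assumed, it is cleaner to write $f(u)-\sigma \leq |f(u)| + \sigma$ and note $\int_t^{t+\Psi(t)}|f(u)|\,\textup du$ can be bounded via~\ref{it:mu_s1} once we know $\Psi(t)$ is small. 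Precisely: if $\Psi(t)$ were arbitrarily small for some sequence $t_n$, then by~\ref{it:mu_s1} the integral $\int_{t_n}^{t_n+\Psi(t_n)} f(u)\,\textup du \to 0$, and the $\sigma\,\Psi(t_n)$ term also tends to $0$, forcing the right-hand side to $0$, contradicting its equality to $1$. Hence $\inf_t \Psi(t) = \delta_0 > 0$.

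The main obstacle — and the technical heart of the argument — is \emph{uniform continuity of $\Psi$}, equivalently of $\Phi$. Fix $s<t$ with $t-s$ small. By Lemma~\ref{lem:rosnie}, $\Phi$ is strictly increasing, so $\Phi(s)<\Phi(t)$; I want to bound $\Phi(t)-\Phi(s)$ (and then $\Psi(t)-\Psi(s)=\bigl(\Phi(t)-\Phi(s)\bigr)-(t-s)$). Subtracting the two instances of~\eqref{implicitnafiringdlalif} and manipulating the exponential weights (factoring $\ue^{\sigma s}$ versus $\ue^{\sigma t}$ and using $\ue^{\sigma t}-\ue^{\sigma s} = \ue^{\sigma s}(\ue^{\sigma(t-s)}-1)$, which is $O(t-s)$ uniformly since the relevant arguments stay in bounded windows $[s,s+\Psi_{\max}]$), one arrives at an identity of the shape
\[
 \int_{\Phi(s)}^{\Phi(t)} \bigl(f(u)-\sigma\bigr)\ue^{\sigma u}\,\textup du = \int_s^t \bigl(f(u)-\sigma\bigr)\ue^{\sigma u}\,\textup du + \bigl(\ue^{\sigma t}-\ue^{\sigma s}\bigr)\cdot(\text{bounded factor}) + (\text{error from reweighting}),
\]
where every term on the right is $O(t-s)$ — the first by~\ref{it:mu_s1} applied on the window $[s,t]$ together with boundedness of $\ue^{\sigma u}$ there (after shifting, or by working locally), and the reweighting error because on $[\Phi(s),\Phi(t)] \subseteq [s, t+\Psi_{\max}]$ the factor $\ue^{\sigma(u - \Phi(s))}$ differs from what appears by $O(t-s)$. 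On the left-hand side, hypothesis~\ref{it:mu_s2} gives the lower bound $\int_{\Phi(s)}^{\Phi(t)}(f(u)-\sigma)\ue^{\sigma u}\,\textup du \geq \varsigma \ue^{\sigma \Phi(s)}\bigl(\Phi(t)-\Phi(s)\bigr)$, and $\ue^{\sigma\Phi(s)}$ is bounded below (locally, or one localizes the whole estimate). Combining, $\Phi(t)-\Phi(s) \leq C\,(t-s)$ with $C$ independent of $s,t$ (for $t-s$ bounded), which is Lipschitz-type control and in particular uniform continuity of $\Phi$, hence of $\Psi$. The delicate points to get right are: keeping all constants uniform in the base point $s$ (this is exactly what~\ref{it:mu_s1}, a $\sup$-over-$s$ condition, is designed for), and handling the $\sigma=0$ and $\sigma>0$ cases together by noting the exponential weights are bounded above and below on the finite windows produced by $\Psi_{\max}$.
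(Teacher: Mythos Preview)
Your strategy matches the paper's: subtract the two instances of~\eqref{implicitnafiringdlalif}, bound the left-hand side from below using~\ref{it:mu_s2}, and control the right-hand side using~\ref{it:mu_s1}. However, the execution has two real gaps.

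First, you claim Lipschitz control $\Phi(t)-\Phi(s)\leq C(t-s)$, but hypothesis~\ref{it:mu_s1} gives no rate: $\sup_s\int_s^{s+\delta}f(u)\,\textup du$ may tend to~$0$ arbitrarily slowly, so the term $\int_s^t f(u)\,\textup du$ on the right is only $o(1)$ as $t-s\to 0^+$, not $O(t-s)$. You do land on the correct conclusion (uniform continuity), but the Lipschitz assertion is unjustified and should be dropped.

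Second, and more seriously, your hedges ``after shifting, or by working locally'' and ``$\ue^{\sigma\Phi(s)}$ is bounded below (locally, or one localizes the whole estimate)'' hide a genuine uniformity issue: with raw weights $\ue^{\sigma u}$ nothing is bounded uniformly in the base point. The paper avoids this cleanly by normalizing from the outset, writing the subtracted identity as
\[
\int_{\Phi(t)}^{\Phi(\tau)}\!\bigl(f(u)-\sigma\bigr)\ue^{\sigma(u-\tau)}\,\textup du
=\int_t^{\tau}\!\bigl(f(u)-\sigma\bigr)\ue^{\sigma(u-\tau)}\,\textup du
+\int_t^{\Phi(t)}\!\bigl(f(u)-\sigma\bigr)\bigl(\ue^{\sigma(u-t)}-\ue^{\sigma(u-\tau)}\bigr)\,\textup du,
\]
so that every exponential sits in $[\ue^{-\sigma\delta},\ue^{\sigma/\varsigma}]$. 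To bound the last integral (your ``reweighting error'') the paper also needs --- and first proves from~\ref{it:mu_s1} --- the auxiliary fact that $\sup_{t}\int_t^{t+1/\varsigma}f(u)\,\textup du\leq k<\infty$; you never establish this, and without it the factor $\ue^{\sigma(u-t)}-\ue^{\sigma(u-\tau)}=O(t-s)$ is multiplied by an integral you have not controlled. Supplying the normalization and this $S^1$-type bound fills the gaps and your argument then coincides with the paper's.
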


\begin{proof}
First, let us note that in view of the assumption~\ref{it:mu_s2} and Proposition~\ref{poprawneokr eslenie} the firing map $\Phi$ and the displacement map $\Psi$ are well-defined for every $t \in \mathbb R$. 

The boundedness of the displacement map $\Psi$ follows from the following estimate
\[
 e^{\sigma t} = \int_t^{\Phi(t)} \bigl(f(u)-\sigma\bigr) e^{\sigma u} \textup du \geq \varsigma e^{\sigma t} \bigl(\Phi(t)-t\bigr)=\varsigma e^{\sigma t}\Psi(t) \qquad \text{for every $t \in \mathbb R$}.
\]

Now, we are going to prove that $\Psi$ is uniformly continuous. We start with observing that, in view of the assumption~\ref{it:mu_s1}, for every $t \in \mathbb R$ the integral $\int_t^{t+1/\varsigma} f(u)\textup du$ can be estimated from above by a constant independent of $t$. Indeed, there exists $\delta_0>0$ such that
\[
 \sup_{s \in \mathbb R} \int_s^{s+\delta_0} f(u) \textup du \leq 1,
\]
and so, if $k \in \mathbb N$ is the smallest number such that $1/\varsigma \leq k \delta_0$, then
\[
 \int_t^{t+1/\varsigma} f(u) \textup du \leq \sum_{i=0}^{k-1} \int_{t+i\delta_0}^{t+(i+1)\delta_0} f(u) \textup du \leq k \cdot \sup_{s \in \mathbb R} \int_s^{s+\delta_0} f(u) \textup du \leq k \qquad \text{for every $t \in \mathbb R$}.
\]

Now, for a given $\varepsilon>0$ choose $\delta>0$ such that
\[
 \frac{1}{\varsigma} e^{\sigma \delta} \sup_{s \in \mathbb R} \int_s^{s+\delta} f(u)\textup du \leq \frac{1}{2}\varepsilon \qquad \text{and} \qquad \frac{1}{\varsigma}(e^{\sigma \delta}-1)k \leq \frac{1}{2}\varepsilon.
\]
Take arbitrary points $t, \tau \in \mathbb R$ such that $\abs{t-\tau}\leq \delta$. Without loss of generality, we may assume that $t\leq \tau$. Then $\Phi(t) \leq \Phi(\tau)$ and
\[
  \int_t^{\Phi(t)} \bigl(f(u)-\sigma\bigr)e^{\sigma(u-t)} \textup du = \int_{\tau}^{\Phi(\tau)} \bigl(f(u)-\sigma\bigr)e^{\sigma(u-\tau)} \textup du. 
\]
Thus
\[
 \int_{\Phi(t)}^{\Phi(\tau)} \bigl(f(u)-\sigma\bigr)e^{\sigma(u-\tau)} \textup du = \int_t^{\tau} \bigl(f(u)-\sigma\bigr)e^{\sigma(u-\tau)} \textup du + \int_{t}^{\Phi(t)}\! \bigl(f(u)-\sigma\bigr)\! \bigl( e^{\sigma(u-t)} - e^{\sigma(u-\tau)} \bigr) \textup du.
\]
Our aim is to estimate the above integrals. For the sake of simplicity let us denote them (starting from the left) by $I_1$, $I_2$ and $I_3$. Then
\begin{align*}
 I_1 \geq \varsigma e^{\sigma(\Phi(t)-\tau)}\bigl(\Phi(\tau)-\Phi(t)\bigr) 
\end{align*}
and
\begin{align*}
 I_2 \leq  \int_t^{\tau} f(u) \textup du\leq  \sup_{s \in \mathbb R} \int_s^{s+\delta} f(u) \textup du.
\end{align*}
For the integral $I_3$ we have
\begin{align*}
 I_3 & = \int^{\Phi(t)}_{t} \! \bigl(f(u)-\sigma\bigr)\! \bigl( e^{\sigma(u-t)} - e^{\sigma(u-\tau)} \bigr) \textup du \leq \bigl( e^{\sigma(\Phi(t)-t)} - e^{\sigma(\Phi(t)-\tau)} \bigr) \cdot \int_{t}^{\Phi(t)} f(u) \textup du\\[1.5mm]
 & \leq \bigl( e^{\sigma(\Phi(t)-t)} - e^{\sigma(\Phi(t)-\tau)} \bigr) k,
\end{align*}
because $0 \leq \Phi(t) - t \leq 1/\varsigma$. Since $\Phi(t)-\tau \geq t-\tau \geq -\delta$, taking into account all the above estimates, we get
\begin{align*}
  \Phi(\tau) - \Phi(t) \leq \frac{1}{\varsigma} e^{\sigma \delta} \sup_{s \in \mathbb R} \int_s^{s+\delta} f(u)\textup du + \frac{1}{\varsigma}(e^{\sigma \delta}-1)k \leq \varepsilon. 
\end{align*}
This shows that the firing map $\Phi$ is uniformly continuous. Then, of course, the displacement map $\Psi$ is also uniformly continuous.

Finally, we will show that $\inf_{t \in \mathbb R}\Psi(t)>0$. Let us observe that 
\[
1=\int_{t}^{t+\Psi(t)}(f(u)-\sigma)e^{\sigma(u-t)}\textup du\leq e^{\sigma \Psi (t)}\int_{t}^{t+\Psi(t)}(f(u)-\sigma)\textup du \leq e^{\sigma/\varsigma} \int_t^{t+\Psi(t)} f(u)\textup du,
\]
and therefore
\[
0<e^{-\sigma/\varsigma}\leq \int_{t}^{t+\Psi(t)}f(u)\textup du \qquad \text{for every $t \in \mathbb R$}.
\]
Moreover, in view of the assumption~\ref{it:mu_s1}, there is $\delta>0$ such that 
\[
 \sup_{t \in \mathbb R} \int_t^{t+\delta} f(u)\textup du \leq \frac{1}{2}e^{-\sigma/\varsigma}.
\]
If $\inf_{t\R}\Psi(t)=0$, then there would exist a real number $t_0$ such that $\Psi(t_0)\leq \delta$. So we would have
\[
0<e^{-\sigma/\varsigma}\leq \int_{t_0}^{t_0 + \Psi(t_0)} f(u) \textup du \leq \int_{t_0}^{t_0 + \delta} f(u) \textup du \leq \frac{1}{2}e^{-\sigma/\varsigma},
\]
which, clearly, is impossible. This shows that $\inf_{t\in \mathbb R}\Psi(t)>0$.
\end{proof}

Let us add that the continuity of the firing map for the PI and LIF models with locally integrable forcing term was investigated, for example, in~\cite{wmjs1} and~\cite{MS}, respectively. In particular, in~\cite{MS} the following result was established (here we added the missing assumption that the firing map $\Phi$ is well-defined)% \marginnote{\tiny A co z brakujacymi zalozeniami w pracy z JMAA?}

\begin{proposition}[\cite{MS}*{Lemma~2.11 (b)}]\label{prop:ciaglosc}
Let $f \in L^1_{\text{loc}}(\mathbb R)$ and assume that the firing map $\Phi$ corresponding to the LIF model~\eqref{lif4}--\eqref{lif_spiking} is defined for every $t \in \mathbb R$. If $f(t)-\sigma>0$ for a.e. $t \in \mathbb R$, then the firing map is continuous on $\mathbb R$. 
\end{proposition}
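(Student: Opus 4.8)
The plan is to turn the implicit relation~\eqref{implicitnafiringdlalif} into an explicit formula exhibiting $\Phi$ as a composition of continuous maps. Put
\[
 F(t) \zdef \int_0^t \bigl(f(u)-\sigma\bigr)\ue^{\sigma u}\; \textup du \qquad \text{for $t \in \mathbb R$}.
\]
Since $f \in L^1_{\text{loc}}(\mathbb R)$, the integrand $\bigl(f(\cdot)-\sigma\bigr)\ue^{\sigma \cdot}$ is locally integrable, so $F$ is absolutely continuous, hence continuous; and because $f(u)-\sigma>0$ for a.e.\ $u$, for any $t_1<t_2$ we have $F(t_2)-F(t_1)=\int_{t_1}^{t_2}\bigl(f(u)-\sigma\bigr)\ue^{\sigma u}\; \textup du>0$, so $F$ is \emph{strictly} increasing. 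Consequently $F$ is a continuous strictly increasing bijection of $\mathbb R$ onto the open interval $(L_-,L_+)$, where $L_\pm\zdef\lim_{t\to\pm\infty}F(t)\in[-\infty,+\infty]$, and therefore its inverse $F^{-1}\colon (L_-,L_+)\to\mathbb R$ is continuous. As $F$ is increasing, $\limsup_{t\to+\infty}F(t)=\lim_{t\to+\infty}F(t)=L_+$, so the hypothesis that $\Phi$ is defined for every $t$, together with Proposition~\ref{poprawneokr eslenie} (condition~\eqref{warunek}), forces $L_+=+\infty$.

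The key step is then to verify that $\Phi(t)=F^{-1}\bigl(F(t)+\ue^{\sigma t}\bigr)$ for every $t\in\mathbb R$. Fix $t$ and set $c\zdef F(t)+\ue^{\sigma t}$; since $\ue^{\sigma t}>0$ we have $F(t)<c$, and since $L_-<F(t)$ and $c<+\infty=L_+$, we get $L_-<c<L_+$, so there is a point $s$, unique by strict monotonicity, with $F(s)=c$; moreover $F(s)=c>F(t)$ forces $s>t$. Because $F$ is continuous and strictly increasing, $\bigl\{t_*>t:\ue^{\sigma t}\leq F(t_*)-F(t)\bigr\}=\bigl\{t_*>t:F(t_*)\geq c\bigr\}=[s,+\infty)$, whence $\Phi(t)=\inf[s,+\infty)=s=F^{-1}(c)$, as claimed. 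Now the map $t\mapsto F(t)+\ue^{\sigma t}$ is continuous and takes values in $(L_-,+\infty)$ (indeed $F(t)+\ue^{\sigma t}>F(t)>L_-$), so $\Phi=F^{-1}\circ\bigl(F+\ue^{\sigma\cdot}\bigr)$ is continuous on $\mathbb R$; in particular the displacement map $\Psi(t)=\Phi(t)-t$ is continuous as well.

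The only genuinely delicate point is this identification $\Phi(t)=F^{-1}\bigl(F(t)+\ue^{\sigma t}\bigr)$, i.e.\ checking that the infimum in Definition~\ref{firingdlalokalnie} is attained precisely at the unique solution of the implicit equation~\eqref{implicitnafiringdlalif}; this is exactly where strict monotonicity and continuity of $F$ come in, and where the strict positivity $f-\sigma>0$ (rather than merely $\geq 0$) is needed. Everything else — continuity and strict monotonicity of an indefinite Lebesgue integral, and continuity of the inverse of a strictly monotone continuous surjection onto an interval — is entirely standard.
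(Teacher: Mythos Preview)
The paper does not supply its own proof of this proposition; it merely cites \cite{MS}*{Lemma~2.11~(b)}. Your argument is correct and self-contained: writing $\Phi(t)=F^{-1}\bigl(F(t)+\ue^{\sigma t}\bigr)$ with $F$ the continuous, strictly increasing primitive of $(f-\sigma)\ue^{\sigma\cdot}$ is a clean way to read off continuity, and you correctly identify that the strict positivity $f-\sigma>0$ a.e.\ (yielding strict monotonicity of $F$, hence a well-defined continuous inverse on $(L_-,L_+)$) is exactly what makes the infimum in Definition~\ref{firingdlalokalnie} a genuine minimum and the representation valid.
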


\begin{remark}
Let us note that in Proposition~\ref{prop:ciaglosc} the assumption that $f(t)-\sigma >0$ for a.e. $t \in \mathbb R$ cannot be replaced (even if $f$ is periodic) with a weaker condition: $f(t)-\sigma>0$ for $t$ belonging to a set of positive Lebesgue measure. To see this it suffices to consider the PI model driven by the locally integrable $2$-periodic function $f\colon \mathbb R \to \mathbb R$ given by
\[
 f(t) = \begin{cases}
          1 & \text{for $t \in [2k,2k+1]$, $k \in \mathbb Z$,}\\
					0 & \text{otherwise},
				\end{cases}
\] 
since then $\Phi(0)=1$, but $\lim_{t \to 0^+}\Phi(t)=2$ (see also Example~\ref{ex:phi_f_piecewise_constant}).
\end{remark}

\subsection{Firing map and its displacement for LIF models with (almost) periodic input}
\label{sec:phi_and_psi}

In this subsection we investigate the properties of the firing map and its displacement for the LIF models driven by (almost) periodic functions. Special attention is paid to $\mu$-almost periodic inputs and the (somewhat unexpected) behaviour of such models.

We start with the following

\begin{proposition}\label{prop:mu-phi}
Let us assume that $f \in L^1_{\text{loc}}(\mathbb R)$ is $\mu$-almost periodic and such that $f(t)-\sigma \geq 0$ for a.e. $t \in \mathbb R$. The firing map $\Phi$ corresponding to the LIF model~\eqref{lif4}--\eqref{lif_spiking} is well-defined for every $t \in \mathbb R$ if and only if there exists a Lebesgue measurable set $A \subseteq \mathbb R$ with $\mu(A)>0$ such that $f(t)-\sigma>0$ for a.e. $t \in A$. 
\end{proposition}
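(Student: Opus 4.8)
The plan is to prove the two implications separately, in both cases exploiting the description of well-definedness in terms of the quantity $\limsup_{t\to+\infty}\int_0^t(f(u)-\sigma)\ue^{\sigma u}\,\textup du$ from Proposition~\ref{poprawneokr eslenie}, together with the $\mu$-almost periodicity of $f$.

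\textbf{The ``if'' direction.} Suppose there is a measurable set $A$ with $\mu(A)>0$ on which $f-\sigma>0$ a.e. By intersecting with a bounded interval and using inner regularity, I may assume $A\subseteq[0,L]$ for some $L>0$, that $\mu(A)=:\alpha>0$, and moreover that there is $\eta>0$ with $f(t)-\sigma\ge\eta$ for a.e.\ $t\in A$ (pass to a subset where $f-\sigma\ge\eta$ and shrink if necessary). The idea is that $\mu$-almost periodicity forces ``translated copies'' of $A$ to recur with comparable measure, on each of which $f-\sigma$ is still bounded below by a positive amount. Concretely, fix the parameters $\eps=\alpha/4$ and $\eta'=\eta/2$ and let $\omega>L$ be a relative density bound for $\mu E\{\eps,\eta',f\}$. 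For each $n\in\mathbb N$ pick $\tau_n\in\bigl(n\omega,n\omega+\omega\bigr)\cap\mu E\{\eps,\eta',f\}$; then on $A$ the inequality $\abs{f(x+\tau_n)-f(x)}<\eta'$ holds off a set of measure at most $\eps=\alpha/4$, so on a subset $A_n\subseteq A$ with $\mu(A_n)\ge 3\alpha/4$ we have $f(x+\tau_n)-\sigma\ge f(x)-\sigma-\eta'\ge\eta/2$. Translating by $\tau_n$, this means $f-\sigma\ge\eta/2$ on a set of measure $\ge 3\alpha/4$ inside $[\tau_n,\tau_n+L]$, and these intervals are pairwise disjoint for large $n$ since $\tau_{n+1}-\tau_n>\omega>L$. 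Now estimate, for $t$ just past $\tau_N+L$,
\[
 \int_0^t(f(u)-\sigma)\ue^{\sigma u}\,\textup du \;\ge\; \int_0^{\tau_1}(f(u)-\sigma)\ue^{\sigma u}\,\textup du + \sum_{n=1}^{N}\int_{\tau_n}^{\tau_n+L}(f(u)-\sigma)\ue^{\sigma u}\,\textup du,
\]
where the first term is finite and fixed (here I use $f-\sigma\ge 0$ a.e.\ to discard the remaining pieces, which are all $\ge 0$). Each summand is at least $(\eta/2)\cdot(3\alpha/4)\cdot\ue^{\sigma\tau_n}\ge(\eta/2)(3\alpha/4)$ (using $\ue^{\sigma\tau_n}\ge 1$ since $\tau_n>0$; if $\sigma=0$ this is still fine). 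Hence the sum diverges as $N\to\infty$, so $\limsup_{t\to+\infty}\int_0^t(f(u)-\sigma)\ue^{\sigma u}\,\textup du=+\infty$, and Proposition~\ref{poprawneokr eslenie} gives that $\Phi$ is well-defined everywhere.

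\textbf{The ``only if'' direction (contrapositive).} Suppose no such set $A$ exists; since $f-\sigma\ge 0$ a.e., this means $f(t)-\sigma=0$ for a.e.\ $t\in\mathbb R$, i.e.\ $f\equiv\sigma$ a.e. Then $\int_0^t(f(u)-\sigma)\ue^{\sigma u}\,\textup du=0$ for all $t$, so \eqref{warunek} fails and, by Proposition~\ref{poprawneokr eslenie}, $\Phi$ is not well-defined for every $t\in\mathbb R$ (indeed it is well-defined for no $t$). This settles the remaining implication.

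\textbf{Anticipated main obstacle.} The routine part is the ``only if'' direction; all the work is in the ``if'' direction, and there the delicate point is to produce, from a single set $A$ of positive measure on which $f-\sigma$ is bounded below, infinitely many \emph{disjoint} far-out intervals carrying a definite positive contribution. The mechanism is exactly the one used in the proof of Theorem~\ref{thm:zero}: choose the $\eps$-almost periods $\tau_n$ in consecutive windows of length $\omega$ (with $\omega>L$ so the translated intervals $[\tau_n,\tau_n+L]$ stay disjoint), and use the measure estimate for $(\eps,\eta)$-almost periods to keep a fixed fraction of $A$ on which $f-\sigma$ remains $\ge\eta/2$ after translation. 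One should be a little careful that passing from ``$f-\sigma>0$ on $A$'' to ``$f-\sigma\ge\eta$ on a subset of positive measure'' is legitimate (it is, by writing $A=\bigcup_k\{x\in A: f(x)-\sigma\ge 1/k\}$ and choosing $k$ with the $k$-th set of positive measure), and that the weight $\ue^{\sigma u}$ only helps (it is $\ge 1$ on $[0,+\infty)$), so the argument is uniform in $\sigma\ge 0$.
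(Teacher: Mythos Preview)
Your approach is essentially the same as the paper's: reduce to a bounded set on which $f-\sigma\ge\eta$, then use $\mu$-almost periodicity to propagate this positivity along a relatively dense set of translates and force divergence of the integral in Proposition~\ref{poprawneokr eslenie}. The ``only if'' direction is fine.

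There are, however, two small technical slips in the ``if'' direction that you should repair:

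\emph{Unit-interval normalisation.} The quantity $D(\eta';f^{\tau},f)$ controls the bad set only on intervals of length~$1$. If $A\subseteq[0,L]$ with $L>1$, the set $\{x\in A:\abs{f(x+\tau_n)-f(x)}\ge\eta'\}$ can have measure up to $\lceil L\rceil\eps$, not $\eps$, so your bound $\mu(A_n)\ge 3\alpha/4$ need not hold. The simplest fix (which is what the paper does) is to take $A$ inside a single unit interval from the start; alternatively, replace $\eps=\alpha/4$ by $\eps=\alpha/(4\lceil L\rceil)$.

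\emph{Separation of the windows.} With $\tau_n\in(n\omega,(n+1)\omega)$ you only get $\tau_{n+1}-\tau_n>0$, not $>\omega$; so the intervals $[\tau_n,\tau_n+L]$ need not be disjoint. Either place the $\tau_n$ in windows separated by $\omega$ (e.g.\ $\tau_n\in(2n\omega,(2n+1)\omega)$, as in the proof of Theorem~\ref{thm:zero}), or observe that $\tau_{n+2}-\tau_n>\omega>L$, so already the even-indexed intervals are disjoint and that suffices for divergence. Since $f-\sigma\ge 0$ a.e., the weight $\ue^{\sigma u}\ge 1$ then gives the conclusion exactly as you wrote.
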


\begin{proof}
The proof of necessity is obvious, so let us proceed to the sufficiency part. From the assumptions it follows that there exist a point $u \in \mathbb R$, together with two positive numbers $\varepsilon$, $\eta$ and a set $B \subseteq A\cap[u,u+1]$ with $\mu(B)=\varepsilon$ such that $f(t)-\sigma \geq \eta$ for a.e. $t \in B$. Reasoning analogous to that in the proof of Theorem~\ref{thm:zero} leads to the following estimate
\[
 \int_0^{2n\omega} \bigl(f(u)-\sigma\bigr)\textup du \geq \frac{\varepsilon \eta }{4}n \qquad \text{for $n \in \mathbb N$},
\]
where the number $\omega$ characterizes the relative density of the set $\mu E\{\frac{\varepsilon}{2},\frac{\eta}{2},f-\sigma\}$. Thus, we get
\[
 \limsup_{t \to +\infty}\int_0^t \bigl(f(u)-\sigma)e^{\sigma u} \textup du \geq \lim_{n \to +\infty}\int_0^{2n\omega} \bigl(f(u)-\sigma\bigr)\textup du =+\infty.
\]
This, in view of Proposition~\ref{poprawneokr eslenie}, implies that the firing map $\Phi$ is well-defined for every $t \in \mathbb R$.
\end{proof}

\begin{remark}\label{rem:TWAIN}
Let us note that Proposition~\ref{prop:mu-phi} extends Proposition~3 from~\cite{wmjs1} to LIF models and $\mu$-almost periodic forcing terms, since for a $S^1$-almost periodic function such that $f-\sigma\geq 0$ a.e. on $\mathbb R$, the existence of a set $A$ with the requested properties is equivalent with the condition: $\mathcal M\{f\}>\sigma$.

Moreover, let us also add that Proposition~\ref{prop:mu-phi} does not follow from Corollary~\ref{cor:l1-phi}, since for $\mu$-almost periodic functions the mean value may not exist (cf. Example~\ref{ex:muap_nonexistence}).
\end{remark}

Now, let us recall a well-known result concerning the periodically driven models; the idea behind this results can be traced to the paper of J. P. Keener, F. C. Hoppensteadt and J. Rinzel (see~\cite{keener1}), although no                     rigorous formulation (i.e. similar to the following one) can be found there. 

\begin{proposition}\label{factbykeener}
Let $f \in L^1_{\text{loc}}(\mathbb R)$ be a $\omega$-periodic function \textup(with $\omega>0$\textup) and let us assume that the firing map $\Phi$ corresponding to the LIF model~\eqref{lif4}--\eqref{lif_spiking} is well-defined for every $t \in \mathbb R$. Then $\Phi(t+\omega) = \Phi(t) + \omega$ for $t \in \mathbb R$, and thus the displacement map of $\Phi$ is $\omega$-periodic. 
\end{proposition}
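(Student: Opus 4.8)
The plan is to exploit the implicit equation~\eqref{implicitnafiringdlalif} together with the $\omega$-periodicity of $f$ and a change of variables. Recall that, since $\Phi$ is well-defined everywhere, for every $t \in \mathbb R$ the value $\Phi(t)$ satisfies
\[
 \ue^{\sigma t} = \int_t^{\Phi(t)} \bigl(f(u)-\sigma\bigr)\ue^{\sigma u}\; \textup du.
\]
I would like to show that $s \zdef \Phi(t)+\omega$ satisfies the analogous equation with $t$ replaced by $t+\omega$, and that $s$ is in fact the \emph{infimum} defining $\Phi(t+\omega)$, not merely a number satisfying the implicit relation.

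First I would verify the implicit relation. Substituting $u = v+\omega$ in the integral defining $\Phi(t+\omega)$-candidate and using $f(v+\omega)=f(v)$ (valid almost everywhere, which is enough since the integrand is only changed on a null set), we get
\[
 \int_{t+\omega}^{\Phi(t)+\omega} \bigl(f(u)-\sigma\bigr)\ue^{\sigma u}\; \textup du = \int_{t}^{\Phi(t)} \bigl(f(v)-\sigma\bigr)\ue^{\sigma (v+\omega)}\; \textup dv = \ue^{\sigma \omega}\int_{t}^{\Phi(t)} \bigl(f(v)-\sigma\bigr)\ue^{\sigma v}\; \textup dv = \ue^{\sigma \omega}\cdot \ue^{\sigma t} = \ue^{\sigma(t+\omega)},
\]
so $\Phi(t)+\omega$ does satisfy the defining inequality $\ue^{\sigma(t+\omega)} \leq \int_{t+\omega}^{t_*} (f(u)-\sigma)\ue^{\sigma u}\, \textup du$ with $t_* = \Phi(t)+\omega$. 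Hence $\Phi(t+\omega) \leq \Phi(t)+\omega$. Applying the same argument with $t$ replaced by $t+\omega$ and $\omega$ by $-\omega$ (equivalently, noting that the roles of $t$ and $t+\omega$ are symmetric under shifting by $\omega$) gives $\Phi(t) = \Phi((t+\omega)-\omega) \leq \Phi(t+\omega) - \omega$, i.e. $\Phi(t)+\omega \leq \Phi(t+\omega)$. Combining the two inequalities yields $\Phi(t+\omega) = \Phi(t)+\omega$. Then $\Psi(t+\omega) = \Phi(t+\omega)-(t+\omega) = \Phi(t)+\omega - t - \omega = \Phi(t)-t = \Psi(t)$, so $\Psi$ is $\omega$-periodic.

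The one point requiring a little care — and the main obstacle, such as it is — is the direction $\Phi(t+\omega) \leq \Phi(t)+\omega$ versus $\Phi(t+\omega) \geq \Phi(t)+\omega$: a priori the infimum defining $\Phi(t+\omega)$ could be strictly smaller than $\Phi(t)+\omega$ if the set $\{t_* > t+\omega : \ue^{\sigma(t+\omega)} \leq \int_{t+\omega}^{t_*}(f(u)-\sigma)\ue^{\sigma u}\, \textup du\}$ had a smaller infimum. This is why running the shift argument symmetrically in both directions (by $+\omega$ and by $-\omega$) is essential: each direction gives one of the two inequalities $\Phi(t+\omega) \leq \Phi(t)+\omega$ and $\Phi(t+\omega) \geq \Phi(t)+\omega$, and together they force equality. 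Note that the hypothesis that $\Phi$ is well-defined \emph{for every} $t \in \mathbb R$ is used precisely so that both $\Phi(t+\omega)$ and $\Phi(t-\omega)$ (or, in the symmetric phrasing, $\Phi$ applied to the shifted point) make sense. No further regularity of $f$ beyond local integrability and $\omega$-periodicity is needed.
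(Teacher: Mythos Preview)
Your proof is correct and follows essentially the same idea as the paper's. The paper's one-line argument observes that the change of variables $u\mapsto u+\omega$ together with the periodicity of $f$ shows that the defining set for $\Phi(t+\omega)$ is precisely $\omega$ plus the defining set for $\Phi(t)$, and then invokes the elementary identity $\omega+\inf A=\inf(\omega+A)$; your two-inequality argument (shift by $+\omega$ and by $-\omega$) is just the standard way of unpacking that identity, so the two presentations are equivalent.
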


\begin{proof}
 The proof follows easily from the properties of definite integrals of periodic functions and the fact that $\omega + \inf A= \inf(\omega +A)$ for non-empty sets $A \subseteq \mathbb R$.
\end{proof}

As observed in the above proposition, periodically driven LIF models have periodic displacement maps (which, in particular, allows to view $\Phi$ as a lift of a degree one circle map, and therefore to explore its dynamics and the properties of the spike trains by tools of circle maps theory; see e.g.~\cites{gedeon,keener1,MS,js2015}). Therefore, a natural question arises whether for an almost periodic input $f$ the corresponding firing map has always almost periodic displacement.

Before we address this question, let us, firstly, consider the case of a limit-periodic forcing term.

\begin{theorem}\label{twierdzenie o limitperiodic}
Let $f\colon \mathbb{R}\to\mathbb{R}$ be a limit-periodic function. Moreover, assume that there exists $\varsigma>0$ such that $f(t)-\sigma>\varsigma$ for $t \in \mathbb R$. Then the firing map corresponding to the LIF model~\eqref{lif4}--\eqref{lif_spiking} has limit-periodic displacement.
\end{theorem}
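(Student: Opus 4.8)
The plan is to exhibit the displacement map $\Psi$ of $\Phi$ as a uniform limit of displacement maps associated with continuous periodic inputs, and the decisive ingredient will be a Lipschitz-type estimate: on the class of inputs that stay a fixed distance above $\sigma$, the assignment ``input $\mapsto$ displacement map'' is Lipschitz for the supremum norm.

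First I would fix, using Definition~\ref{def limit periodic}, a sequence $(f_n)_{n\in\mathbb N}$ of continuous periodic functions with $f_n\to f$ uniformly; since $\inf_{t\in\mathbb R}\bigl(f(t)-\sigma\bigr)\geq\varsigma>0$, discarding finitely many terms we may assume that $f_n(t)-\sigma\geq\varsigma'\zdef\varsigma/2$ for all $t\in\mathbb R$ and all $n$, and we let $\omega_n>0$ be a period of $f_n$. Because $f_n-\sigma\geq\varsigma'$ one has $\int_0^t\bigl(f_n(u)-\sigma\bigr)\ue^{\sigma u}\,\textup du\geq\varsigma'\int_0^t\ue^{\sigma u}\,\textup du\to+\infty$, so by Proposition~\ref{poprawneokr eslenie} the firing map $\Phi_n$ driven by $f_n$ is well-defined on $\mathbb R$; by Proposition~\ref{prop:ciaglosc} it is continuous, and by Proposition~\ref{factbykeener} its displacement map $\Psi_n$, $\Psi_n(t)=\Phi_n(t)-t$, is $\omega_n$-periodic. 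Thus each $\Psi_n$ is a continuous periodic function. The same facts applied to $f$ itself (which is continuous, being a uniform limit of continuous functions, and satisfies $f-\sigma>\varsigma>0$) show that the firing map $\Phi$ and the displacement map $\Psi$ driven by $f$ are well-defined on $\mathbb R$.

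The key step will be the estimate $\sup_{t\in\mathbb R}\abs{\Psi_n(t)-\Psi(t)}\leq\|f_n-f\|_\infty/(\varsigma')^2$ for every $n$. To prove it, fix $t\in\mathbb R$ and assume without loss of generality that $\Phi(t)\leq\Phi_n(t)$; write $a=\Phi(t)$ and $b=\Phi_n(t)$. By the implicit equation~\eqref{implicitnafiringdlalif} applied to $f$ and to $f_n$,
\[
 \int_t^a\bigl(f(u)-\sigma\bigr)\ue^{\sigma u}\,\textup du=\ue^{\sigma t}=\int_t^b\bigl(f_n(u)-\sigma\bigr)\ue^{\sigma u}\,\textup du,
\]
whence $\int_a^b\bigl(f_n(u)-\sigma\bigr)\ue^{\sigma u}\,\textup du=\int_t^a\bigl(f(u)-f_n(u)\bigr)\ue^{\sigma u}\,\textup du$. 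The left-hand side is at least $\varsigma'\ue^{\sigma a}(b-a)$, because $f_n-\sigma\geq\varsigma'$ and $\ue^{\sigma u}\geq\ue^{\sigma a}$ for $u\geq a$. For the right-hand side, note that $0\leq a-t=\Psi(t)\leq1/\varsigma'$ (this bound follows exactly as in the proof of Theorem~\ref{lem:uniform_con_bound}, since $\ue^{\sigma t}=\int_t^a(f(u)-\sigma)\ue^{\sigma u}\,\textup du\geq\varsigma'\ue^{\sigma t}(a-t)$), so that
\[
 \Bigl|\int_t^a\bigl(f(u)-f_n(u)\bigr)\ue^{\sigma u}\,\textup du\Bigr|\leq\|f_n-f\|_\infty\int_t^a\ue^{\sigma u}\,\textup du\leq\|f_n-f\|_\infty\,\ue^{\sigma a}(a-t)\leq\frac{\|f_n-f\|_\infty}{\varsigma'}\,\ue^{\sigma a}.
\]
Comparing the two bounds and cancelling $\ue^{\sigma a}$ gives $b-a\leq\|f_n-f\|_\infty/(\varsigma')^2$; since $b-a=\Phi_n(t)-\Phi(t)=\Psi_n(t)-\Psi(t)$, and since interchanging the roles of $f$ and $f_n$ covers the case $\Phi_n(t)\leq\Phi(t)$, the claimed estimate follows.

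Letting $n\to\infty$ in this estimate shows that $\Psi_n\to\Psi$ uniformly on $\mathbb R$; as each $\Psi_n$ is continuous and periodic, Definition~\ref{def limit periodic} yields that $\Psi$ is limit-periodic, which is the assertion. I expect the only genuine obstacle to be the Lipschitz estimate of the third paragraph — everything else is just assembling Propositions~\ref{poprawneokr eslenie},~\ref{prop:ciaglosc} and~\ref{factbykeener}; within that step one should also check that the degenerate case $\sigma=0$ (where $\ue^{\sigma u}\equiv1$) is covered, which it is, the computation going through verbatim.
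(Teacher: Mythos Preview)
Your proposal is correct and follows essentially the same route as the paper's proof: approximate $f$ uniformly by continuous periodic $f_n$ with $f_n-\sigma\geq\varsigma'$, invoke Propositions~\ref{poprawneokr eslenie}, \ref{prop:ciaglosc} and~\ref{factbykeener} to get continuous periodic displacements $\Psi_n$, and then compare the implicit equations~\eqref{implicitnafiringdlalif} for $f$ and $f_n$ to obtain a uniform bound on $\abs{\Phi_n(t)-\Phi(t)}$ via the lower bound $f_n-\sigma\geq\varsigma'$ on $[\Phi(t),\Phi_n(t)]$ and the bound $\Psi(t)\leq 1/\varsigma'$ on the length of $[t,\Phi(t)]$. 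The only cosmetic difference is that you package the estimate as an explicit Lipschitz inequality $\norm{\Psi_n-\Psi}_\infty\leq\norm{f_n-f}_\infty/(\varsigma')^2$, whereas the paper writes the same computation as an $\varepsilon$--$N$ argument; your formulation is in fact a pointwise version of the continuity statement the paper later proves as Theorem~\ref{oT}.
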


\begin{proof}
Since $f$ is limit-periodic, there exists a sequence $(f_n)_{n \in \mathbb N}$ of continuous periodic functions uniformly convergent to $f$ on $\mathbb R$. Clearly, we may assume that $f_n(t)-\sigma > \frac{1}{2}\varsigma$ for all $t \in \mathbb R$ and $n \in \mathbb N$.

By Proposition~\ref{poprawneokr eslenie} the firing maps $\Phi$ and $\Phi_n$ (and their displacements $\Psi$ and $\Psi_n$), which correspond to the LIF model~\eqref{lif4}--\eqref{lif_spiking} driven by $f$ and $f_n$, respectively, are well-defined for every $t \in \mathbb R$. Moreover, in view of Theorem~\ref{lem:uniform_con_bound} (or Proposition~\ref{prop:ciaglosc}) and Proposition~\ref{factbykeener}, we infer that the displacement maps $\Psi_n$ are continuous and periodic.

Let us also observe that in order to show that the displacement map $\Psi$ is limit-periodic, it suffices to show that the sequence $(\Phi_n)_{n \in \mathbb N}$ is uniformly convergent to $\Phi$ on $\mathbb R$, since $\sup_{t \in \mathbb R}\abs{\Psi(t)-\Psi_n(t)}= \sup_{t \in \mathbb R}\abs{\Phi(t)-\Phi_n(t)}$.

Given $\varepsilon>0$ choose $N \in \mathbb N$ such that $\sup_{t \in \mathbb R}\abs{f_n(t)-f(t)}<\frac{1}{2}\varsigma^2\varepsilon$ for all $n \geq N$. Let $n\geq N$ and fix $t\in\mathbb{R}$. Suppose that $\Phi_n(t)>\Phi(t)$. We calculate that
\[
 \int_{\Phi(t)}^{\Phi_n(t)} \bigl(f_n(u)-\sigma\bigr) \ue^{\sigma u}\; \textup du=\int_{t}^{\Phi(t)} \bigl(f(u)-f_n(u)\bigr) \ue^{\sigma u}\; \textup du \leq \frac{1}{2}\varsigma^2\varepsilon\ue^{\sigma \Phi(t)}(\Phi(t)-t) \leq \frac{1}{2}\varsigma\varepsilon\ue^{\sigma \Phi(t)};
\]
the last inequality in the above chain of inequalities follows from the fact that $\Phi(t)-t \leq 1/\varsigma$ for $t \in \mathbb R$ (cf. the proof of Theorem~\ref{lem:uniform_con_bound}). Simultaneously,
\[
\int_{\Phi(t)}^{\Phi_n(t)} \bigl (f_n(u)-\sigma\bigr )\ue^{\sigma u}\; \textup du \geq \frac{1}{2}\varsigma\ue^{\sigma\Phi(t)} \abs{\Phi_n(t)-\Phi(t)}.
\]
Therefore, $\abs{\Phi_n(t)-\Phi(t)}\leq \varepsilon$ for all $n \geq N$. When $\Phi(t)>\Phi_n(t)$ we arrive at the same conclusion. As a result, $\Phi$ is the uniform limit of $(\Phi_n)_{n \in \mathbb N}$. This ends the proof.
\end{proof}

Now we will pass to the case of an almost periodic input.

\begin{theorem}\label{new almost periodic displacement}
Let $f\colon \mathbb{R}\to\mathbb{R}$ be a $S^1$-almost periodic function. Moreover, assume that there exists $\varsigma>0$ such that $f(t)-\sigma>\varsigma$ for a.e. $t \in \mathbb R$. Then the firing map $\Phi$ corresponding to the LIF model~\eqref{lif4}--\eqref{lif_spiking} has uniformly almost periodic displacement $\Psi$.
\end{theorem}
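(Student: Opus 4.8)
The plan is to reduce everything to a single \emph{continuous dependence} estimate: the firing map depends Lipschitz-continuously on its input with respect to the $S^1$-norm, uniformly in the base point. First I would check that the hypotheses of Theorem~\ref{lem:uniform_con_bound} are met, with the constant $\varsigma$ from the statement. Condition~\ref{it:mu_s2} is immediate. For condition~\ref{it:mu_s1}, recall that $f$ is the $\norm{\cdot}_{S^1_1}$-limit of trigonometric polynomials (Remark~\ref{rem:s_p_s_1}); so, given $\eta>0$, pick a trigonometric polynomial $P$ with $\norm{f-P}_{S^1_1}<\eta$, and for $0<\delta\le 1$ estimate $\int_s^{s+\delta}f(u)\,\textup du\le\int_s^{s+1}\abs{f-P}\,\textup du+\delta\norm{P}_\infty\le\eta+\delta\norm{P}_\infty$ uniformly in $s$, whence $\sup_s\int_s^{s+\delta}f(u)\,\textup du\to0$ as $\delta\to0^+$. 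Thus $\Phi$ and $\Psi$ are well-defined on all of $\mathbb R$, $\Psi$ is bounded and uniformly continuous, and $\Phi(t)-t\le1/\varsigma$ for every $t$ (this last bound also follows directly from~\eqref{implicitnafiringdlalif} and $f-\sigma\ge\varsigma$).

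The core step will be the lemma: if $f_1,f_2\in L^1_{\text{loc}}(\mathbb R)$ satisfy $f_i(t)-\sigma\ge\varsigma$ a.e., with firing maps $\Phi_1,\Phi_2$, then $\sup_{t\in\mathbb R}\abs{\Phi_1(t)-\Phi_2(t)}\le L\norm{f_1-f_2}_{S^1_1}$ with $L=\lceil1/\varsigma\rceil/\varsigma$. To prove it, fix $t$, assume without loss of generality $\Phi_1(t)\le\Phi_2(t)$, subtract the implicit equations~\eqref{implicitnafiringdlalif} for $\Phi_1(t)$ and $\Phi_2(t)$, and rearrange to get
\[
 \int_{\Phi_1(t)}^{\Phi_2(t)}\bigl(f_2(u)-\sigma\bigr)\ue^{\sigma u}\,\textup du=\int_{t}^{\Phi_1(t)}\bigl(f_1(u)-f_2(u)\bigr)\ue^{\sigma u}\,\textup du.
\]
The left-hand side is $\ge\varsigma\ue^{\sigma\Phi_1(t)}\bigl(\Phi_2(t)-\Phi_1(t)\bigr)$, while the right-hand side is at most $\ue^{\sigma\Phi_1(t)}\int_t^{\Phi_1(t)}\abs{f_1-f_2}\,\textup du\le\ue^{\sigma\Phi_1(t)}\lceil1/\varsigma\rceil\norm{f_1-f_2}_{S^1_1}$, because $[t,\Phi_1(t)]$ has length $\le1/\varsigma$ and so is covered by at most $\lceil1/\varsigma\rceil$ unit intervals. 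Cancelling the common factor $\ue^{\sigma\Phi_1(t)}$ gives the claim; the case $\Phi_2(t)\le\Phi_1(t)$ is symmetric.

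To finish, I would record the translation identity $\Phi_{f^\tau}(t)=\Phi_f(t+\tau)-\tau$ for every $\tau\in\mathbb R$: substituting $v=u+\tau$ in~\eqref{implicitnafiringdlalif} for $\Phi_{f^\tau}(t)$ turns it into the implicit equation for $\Phi_f$ at the point $t+\tau$, and the strict monotonicity of $t_*\mapsto\int_{s}^{t_*}\bigl(f(v)-\sigma\bigr)\ue^{\sigma v}\,\textup dv$ (valid since $f-\sigma>0$) forces $\Phi_f(t+\tau)=\Phi_{f^\tau}(t)+\tau$. Hence $\Psi^\tau(t)-\Psi(t)=\Psi_f(t+\tau)-\Psi_f(t)=\Phi_{f^\tau}(t)-\Phi_f(t)$, and since $f^\tau(t)-\sigma=f(t+\tau)-\sigma>\varsigma$ a.e., the lemma applied to $f_1=f$, $f_2=f^\tau$ yields
\[
 \sup_{t\in\mathbb R}\abs{\Psi^\tau(t)-\Psi(t)}\le L\,\norm{f^\tau-f}_{S^1_1}\qquad\text{for every }\tau\in\mathbb R.
\]
Consequently $S^1E\{\varepsilon/L,f\}\subseteq E\{\varepsilon,\Psi\}$ for every $\varepsilon>0$; as $f$ is $S^1$-almost periodic, the left-hand set is relatively dense, so $E\{\varepsilon,\Psi\}$ is relatively dense, and combined with the uniform continuity of $\Psi$ this shows $\Psi$ is uniformly almost periodic.

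The main obstacle is genuinely just the continuous-dependence lemma, and the only delicate point is making it uniform in the base point $t$: this is exactly what the a priori bound $\Phi(t)-t\le1/\varsigma$ provides, through a base-point-independent number of unit subintervals and the cancellation of $\ue^{\sigma\Phi_1(t)}$. One should also keep in mind that $S^1$-closeness of $f^\tau$ to $f$ does not by itself transfer the pointwise bound $f-\sigma>\varsigma$, but here $f^\tau$ is a true translate of $f$, so the bound is inherited for free and the lemma genuinely applies.
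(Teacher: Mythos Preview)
Your proof is correct and follows essentially the same route as the paper: both arguments subtract the implicit equations~\eqref{implicitnafiringdlalif} for the two firing times, bound the resulting integral from below by $\varsigma\,\ue^{\sigma\Phi_1(t)}\bigl(\Phi_2(t)-\Phi_1(t)\bigr)$ and from above by $\ue^{\sigma\Phi_1(t)}$ times roughly $\lceil1/\varsigma\rceil\,\norm{f_1-f_2}_{S^1_1}$, and cancel the common exponential factor. The only difference is packaging: you abstract the estimate into a uniform continuous-dependence lemma and then apply it to the pair $(f,f^\tau)$ via the translation identity, whereas the paper carries out the same computation directly with $f(u)$ and $f(u+\tau)$; your formulation is in fact the one the paper later uses in the proof of Theorem~\ref{oT}, so the abstraction is natural and costs nothing.
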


\begin{remark}
Let us note that in the above result we can also assume that $f$ is $S^p$-almost periodic for some $p \in [1,+\infty)$ or even uniformly almost periodic, since $AP(\mathbb R) \subseteq S^p(\mathbb R) \subseteq S^1(\mathbb R)$ (see Remark~\ref{rem:AP_sap} and Remark~\ref{rem:s_p_s_1}).
\end{remark}

\begin{proof}[Proof of Theorem~\textup{\ref{new almost periodic displacement}}]
Clearly, the firing map $\Phi$ and the displacement map $\Psi$ are defined for every $t \in \mathbb R$. Furthermore, the maps $\Phi$ and $\Psi$ are continuous in view of Proposition~\ref{prop:ciaglosc}.

Without loss of generality we may assume that $\varsigma<1$. Given $\varepsilon >0$ let $\tau\in S^1E\{\frac{\varsigma^2\varepsilon}{4},f\}$. Then $\norm{f^{\tau}-f}_{S^1_1} <\frac{\varsigma^2  \varepsilon}{4}$. Now, fix $t \in \mathbb R$ and suppose that $\min\{\Phi(t),\Phi(t+\tau)-\tau\}=\Phi(t)$. From the definition of the firing map the following equality can be easily derived:
\begin{equation}\label{eq:prop4.9.1}
\bgabs{ \int_t^{\Phi(t)} \bigl(f(u+\tau)-f(u)\bigr)\ue^{\sigma(u+\tau)}\ud u} = \bgabs{\int_{\Phi(t)}^{\Phi(t+\tau)-\tau}\bigl(f(u+\tau)-\sigma\bigr)\ue^{\sigma(u+\tau)}\ud u}.
\end{equation}
Since $\tau\in S^1E\{\frac{\varsigma^2\varepsilon}{4}, f\}$, elementary calculations show that
\begin{equation}\label{eq:prop4.9.2}
 \bgabs{\int_{t}^{\Phi(t)} \bigl(f(u+\tau)-f(u)\bigr)\ue^{\sigma(u+\tau)}\ud u }  <\ue^{\sigma(\Phi(t)+\tau)}\frac{k \varsigma^2 \varepsilon}{4},
\end{equation}
where $k\in\mathbb{N}$ is the smallest integer such that $\Phi(t)\leq t+k$. Furthermore, since $f(u)-\sigma>\varsigma$ for a.e. $u \in \mathbb R$, we infer that
\begin{equation}\label{eq:prop4.9.3}
\int_{\Phi(t)}^{\Phi(t+\tau)-\tau} \bigl( f(u+\tau)-\sigma\bigr) \ue^{\sigma(u+\tau)}\ud u \geq \ue^{\sigma(\Phi(t)+\tau)}\varsigma \bigl(\Phi(t+\tau)-\tau-\Phi(t)\bigr).
\end{equation}
Let us note that $\Phi(u)-u\leq 1/\varsigma$ for any $u\in\mathbb{R}$, and thus $k\leq (1/\varsigma+1)$. So by~\eqref{eq:prop4.9.1}--\eqref{eq:prop4.9.3} and the above observation, we get
 \[
 \Phi(t+\tau)-\tau-\Phi(t)<\frac{k \varsigma\varepsilon}{4}\leq \frac{(\frac{1}{\varsigma}+1)\varsigma\varepsilon}{4}<\frac{\varepsilon}{2}.
 \]
The case $\min\{\Phi(t),\Phi(t+\tau)-\tau\}=\Phi(t+\tau)-\tau$ treated similarly yields $\Phi(t)-\Phi(t+\tau)+\tau<\frac{\varepsilon}{2}$. Thus  $|\Phi(t+\tau)-\Phi(t)-\tau|<\frac{\varepsilon}{2}$ for every $t \in \mathbb R$, which means that the set $E\{\varepsilon,\Psi\}$ is relatively dense as it contains the relatively dense set $S^1E\{\frac{\varsigma^2\varepsilon}{4},f\}$. This, in turn, shows that $\Psi$ is uniformly almost periodic and ends the proof.
\end{proof}

Since a uniformly almost periodic function is uniformly continuous we have the following

\begin{corollary}\label{cor:1}
Let $f\colon \mathbb{R}\to\mathbb{R}$ be a $S^1$-almost periodic function. Moreover, assume that there exists $\varsigma>0$ such that $f(t)-\sigma>\varsigma$ for a.e. $t \in \mathbb R$. Then the firing map $\Phi$ corresponding to the LIF model~\eqref{lif4}--\eqref{lif_spiking} and its displacement $\Psi$ are uniformly continuous.
\end{corollary}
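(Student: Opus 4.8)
The plan is to obtain the corollary as an immediate consequence of Theorem~\ref{new almost periodic displacement}. Under the present hypotheses (namely $f \in S^1(\mathbb R)$ and $f(t)-\sigma>\varsigma$ a.e. for some $\varsigma>0$), that theorem already asserts that the displacement map $\Psi$ corresponding to the LIF model~\eqref{lif4}--\eqref{lif_spiking} is uniformly almost periodic. Since every Bohr almost periodic function is uniformly continuous (as recalled in Section~\ref{sec:2}), we conclude at once that $\Psi$ is uniformly continuous on $\mathbb R$.

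It then remains only to transfer the conclusion from $\Psi$ to $\Phi$. By Definition~\ref{displacement map} we have $\Phi(t)=\Psi(t)+t$ for every $t\in\mathbb R$, so $\Phi$ is the sum of the uniformly continuous function $\Psi$ and the identity map $t\mapsto t$, which is trivially uniformly continuous (one may take $\delta=\varepsilon$). A finite sum of uniformly continuous functions is uniformly continuous, and hence $\Phi$ is uniformly continuous as well. This finishes the argument.

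There is essentially no obstacle here, as the statement is a short corollary; the only point worth flagging is that one could instead try to invoke Theorem~\ref{lem:uniform_con_bound} directly, but this would require verifying hypothesis~\ref{it:mu_s1} for an $S^1$-almost periodic $f$, which is a slightly less transparent route than simply quoting Theorem~\ref{new almost periodic displacement} and the fact that uniformly almost periodic functions are uniformly continuous.
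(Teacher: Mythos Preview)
Your proof is correct and follows exactly the route taken in the paper: deduce uniform almost periodicity of $\Psi$ from Theorem~\ref{new almost periodic displacement}, use that Bohr almost periodic functions are uniformly continuous, and then pass to $\Phi=\Psi+\id$. Your closing remark about the alternative approach via Theorem~\ref{lem:uniform_con_bound} also mirrors the paper's own Remark~\ref{rem:4.20}.
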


\begin{remark}\label{rem:4.20}
Corollary~\ref{cor:1} is also a consequence of Theorem~\ref{lem:uniform_con_bound}, since it can be shown that every almost everywhere non-negative $S^1$-almost periodic function satisfies the condition~\ref{it:mu_s1} of Theorem~\ref{lem:uniform_con_bound} (for more details see~\cites{B-N, S}; cf. also Theorem~\ref{thm:BN_s1} below).
\end{remark}

Now, we move to the investigation of the firing map and its displacement for LIF models driven by a locally integrable $\mu$-almost periodic functions. We begin with an example showing that in the `$\mu$-almost periodic setting' it may happen that the displacement map, although uniformly continuous and bounded, fails to be almost periodic in any sense considered in this paper. This somewhat unexpected phenomenon has interesting consequences, since, for example, it allows to establish some results in the theory of almost periodicity (for more details see Section~\ref{sec:5} below).

\begin{example}\label{ex:mu_no_mu}
Let us consider the following sets: $A_n = 2^n \mathbb Z + 2^{n-1}$ for $n \in \mathbb N$ and let us observe that
\[
 \text{$A_n \cap A_m = \emptyset$ if $n \neq m$} \qquad \text{and} \qquad  \bigcup_{n \in \mathbb N} A_n = \mathbb Z \setminus\{0\}.
\]
Furthermore, for the sake of simplicity, let us put
\begin{equation}\label{eq:b_n_c_n}
 B_{n,k}(z) = \Bigl(z+\frac{k}{2^{n-1}},z + \frac{k}{2^{n-1}} + \frac{1}{2}\cdot\frac{1}{4^{n-1}}\Bigr) \qquad \text{and} \qquad C_n = \bigcup_{z \in A_n}\bigcup_{k=0}^{2^{n-1}-1} B_{n,k}(z). 
\end{equation}
Note that for a given $n \in \mathbb N$ and $z \in A_n$ we have $B_{n,k}(z) \cap B_{n,l}(z)=\emptyset$ for distinct $k,l \in \{0,\ldots,2^{n-1}-1\}$, and moreover $B_{n,k}(z) \subseteq (z,z+1)$ for $k \in \{0,\ldots,2^{n-1}-1\}$. Observe also that the sets $C_n$ are pairwise disjoint.

Now, let us define the locally integrable functions $f_n \colon \mathbb R \to \mathbb R$ by the formulas:
\[
 f_n (x)= \begin{cases}
        2^n& \text{for $x \in C_n$},\\[2mm]
        0& \text{for $x\in \mathbb R\setminus C_n$},
       \end{cases}
\]
and let us note that for each $n \in \mathbb N$ the function $f_n$ is $2^n$-periodic, since $x \in C_n$ if and only if $x \pm 2^n \in C_n$. 

We shall show that the locally integrable function $f \colon \mathbb R \to \mathbb R$ given by
\begin{equation}\label{eq:funckja}
 f(x) = 1 + \sum_{n=1}^{\infty} f_n(x) \qquad \text{for $x \in \mathbb R$},
\end{equation}
is $\mu$-almost periodic. In fact, we are going to show that $f$ is the $D$-limit of the sequence $(g_k)_{k \in \mathbb N}$ of locally integrable periodic functions, where
\[
 g_k(x)=1+\sum_{n=1}^k f_n(x) \qquad \text{for $x \in \mathbb R$}.
\]
Fix $\varepsilon>0$ and $\eta \in (0,1)$, and choose $N \in \mathbb N$ such that $2^{-(N+1)}<\varepsilon$. Then, for every $k \geq N$ and every $z \in \mathbb Z\setminus\{0\}$, we have
\[
 \set{x \in [z,z+1]}{\abs{f(x)-g_k(x)}\geq \eta} \subseteq [z,z+1]\cap C_m
\]
for some $m \geq k+1$. Hence
\[
 \mu\bigl(\set{x \in [z,z+1]}{\abs{f(x)-g_k(x)}\geq \eta} \bigr) \leq \mu\bigl([z,z+1]\cap C_m\bigr) \leq \frac{1}{2^m} \leq \frac{1}{2^{N+1}} \leq \varepsilon.
\]
Since $\set{x \in [0,1]}{\abs{f(x)-g_k(x)}\geq \eta}=\emptyset$ for every $k \in \mathbb N$, in view of Remark~\ref{lem:szacowanie_mu}, we obtain that $D(\eta;f,g_k) \leq 2\varepsilon$ for $k \geq N$. This proves that the sequence $(g_k)_{k \in \mathbb N}$ is $D$-convergent to $f$, and thus the function $f$ is $\mu$-almost periodic (see Remark~\ref{rem:D_zbieznosc2}).

Now, we would like to show that the displacement map $\Psi$ corresponding to the PI model driven by the function $f$ is uniformly continuous and bounded. To this end we are going to apply Theorem~\ref{lem:uniform_con_bound}. Since the assumption~\ref{it:mu_s2} of Theorem~\ref{lem:uniform_con_bound} is satisfied with $\varsigma=1$, it suffices to show that $f$ satisfies the assumption~\ref{it:mu_s1}. We claim that 
\begin{equation}\label{eq:szacowanie2}
 \sup_{u \in \mathbb R} \int_u^{u+\frac{1}{2^{m-1}}} f(s) \textup ds \leq \frac{1}{2^{\frac{1}{2}m-3}} \qquad \text{for $m \in \mathbb N$}.
\end{equation}
First, observe that given any $u \in \mathbb R$ and $m \in \mathbb N$, there exist $z \in \mathbb Z$ and $k \in \{0,\ldots,2^{m-1}-1\}$ such that 
\begin{equation}\label{eq:zawieranie4}
 \Bigl(u,u+\frac{1}{2^{m-1}}\Bigr) \subseteq  \Bigl(z+\frac{k}{2^{m-1}},z+\frac{k+2}{2^{m-1}}\Bigr),
\end{equation}
and therefore to obtain~\eqref{eq:szacowanie2} it suffices to estimate the integral of the function $f$ on every interval of the form  
\[
\Bigl(z+\frac{k}{2^{m-1}},z+\frac{k+1}{2^{m-1}}\Bigr), \qquad \text{where $z \in \mathbb Z$ and $k \in \{0,\ldots,2^{m-1}-1\}$}.
\]
Let us consider the following five cases.
\begin{enumerate}[wide, label={Case \arabic*:}]
 \item If $z=0$, then
 \[
   \int_{z+\frac{k}{2^{m-1}}}^{z+\frac{k+1}{2^{m-1}}} f(s) \textup ds = \frac{1}{2^{m-1}} \leq \frac{1}{2^{\frac{1}{2}m-2}}.
 \] 

	\item If $z \in A_n$ for some $n\in \mathbb N$ and $n>m$, then there exists $l \in \{0,\ldots,2^{n-1}-2^{n-m}\}$ such that
\[
 \frac{k}{2^{m-1}}=\frac{l}{2^{n-1}} < \frac{l+1}{2^{n-1}}<\ldots<\frac{l+2^{n-m}-1}{2^{n-1}} < \frac{l+2^{n-m}}{2^{n-1}}=\frac{k+1}{2^{m-1}}.
\]
Moreover, for $r \in \{0,\ldots,2^{n-m}-1\}$ we have
\[
 \Bigl(\frac{l+r}{2^{n-1}}, \frac{l+r}{2^{n-1}} + \frac{1}{2}\cdot\frac{1}{4^{n-1}} \Bigr) \subseteq  \Bigl(\frac{k}{2^{m-1}}, \frac{k+1}{2^{m-1}}\Bigr), 
\]
and the intervals
\[
   \Bigl(\frac{l+r}{2^{n-1}}, \frac{l+r}{2^{n-1}} + \frac{1}{2}\cdot\frac{1}{4^{n-1}} \Bigr), \qquad r \in \{0,\ldots,2^{n-m}-1\}, 
\]
are pairwise disjoint. So
\begin{align*}
   \int_{z+\frac{k}{2^{m-1}}}^{z+\frac{k+1}{2^{m-1}}} f(s) \textup ds & = \frac{1}{2^{m-1}} + \sum_{i=0}^{2^{n-m}-1}  \int_{B_{n,l+i}(z)} f_n(s) \textup ds = \frac{1}{2^{m-1}} + 2^{n-m}\cdot \frac{1}{2}\cdot \frac{1}{4^{n-1}}\cdot 2^n =\frac{1}{2^{m-2}} \leq \frac{1}{2^{\frac{1}{2}m-2}}.
\end{align*}

\item If $z \in A_n$ for some $n \in \mathbb N$ and $n=m$, then
\begin{align*}
   \int_{z+\frac{k}{2^{m-1}}}^{z+\frac{k+1}{2^{m-1}}} f(s) \textup ds & =\frac{1}{2^{m-1}} +  \int_{z+\frac{k}{2^{m-1}}}^{z+\frac{k+1}{2^{m-1}}} f_m(s) \textup ds =  \frac{1}{2^{m-1}} + \int_{B_{m,k}(z)} f_m(s) = \frac{1}{2^{m-2}} \leq \frac{1}{2^{\frac{1}{2}m-2}}.
\end{align*}

 \item Suppose that $z \in A_n$ for some $n\in \mathbb N$ and $n<m\leq 2n$. Then, it is easy to show that there exists exactly one number $l \in \{0,\ldots,2^{n-1}-1\}$ such that
\[
 \Bigl(z+\frac{k}{2^{m-1}}, z+\frac{k+1}{2^{m-1}}\Bigr) \subseteq \Bigl(z + \frac{l}{2^{n-1}}, z+\frac{l+1}{2^{n-1}}\Bigr). 
\]
And so
\begin{align*}
   \int_{z+\frac{k}{2^{m-1}}}^{z+\frac{k+1}{2^{m-1}}} f(s) \textup ds & \leq \frac{1}{2^{m-1}} +  \int_{z+\frac{l}{2^{n-1}}}^{z+\frac{l+1}{2^{n-1}}} f_n(s) \textup ds = \frac{1}{2^{m-1}} + \int_{B_{n,l}(z)} f_n(s) \textup ds\\[2mm]
	 & = \frac{1}{2^{m-1}} + \frac{1}{2^{n-1}} \leq \frac{1}{2^{m-1}} + \frac{1}{2^{\frac{1}{2}m-1}} \leq \frac{1}{2^{\frac{1}{2}m-2}}.
\end{align*}

\item Suppose that $z \in A_n$ for some $n\in \mathbb N$ and $2n<m$. Then 
\begin{align*}
   \int_{z+\frac{k}{2^{m-1}}}^{z+\frac{k+1}{2^{m-1}}} f(s) \textup ds & = \frac{1}{2^{m-1}} + \int_{z+\frac{k}{2^{m-1}}}^{z+\frac{k+1}{2^{m-1}}} f_n(s) \textup ds \leq \frac{1}{2^{m-1}} + \frac{1}{2^{m-n-1}} \leq \frac{1}{2^{m-1}} + \frac{1}{2^{\frac{1}{2}m-1}} \leq \frac{1}{2^{\frac{1}{2}m-2}}.
\end{align*}
\end{enumerate}

Summarizing, in each case, we have
\[
  \int_{z+\frac{k}{2^{m-1}}}^{z+\frac{k+1}{2^{m-1}}} f(s) \textup ds \leq \frac{1}{2^{\frac{1}{2}m-2}},
\]
which in connection with~\eqref{eq:zawieranie4} proves~\eqref{eq:szacowanie2}. This, in turn, implies that the function $f$ satisfies the assumption~\ref{it:mu_s1} of Theorem~\ref{lem:uniform_con_bound}, and therefore the displacement map $\Psi$ corresponding to the PI model driven by the function $f$ is bounded and uniformly continuous. 

Finally, we will show that the displacement map $\Psi$ is not $\mu$-almost periodic. First, observe that 
\[
  \int_{z}^{z+\frac{1}{2}} f(s) \textup ds \geq 1 \qquad \text{for $z \in \mathbb Z\setminus\{0\}$}.
\]
Indeed, if $z \in A_1$, then we have $B_{1, 0}(z)=\Bigl(z, z+\frac{1}{2}\Bigr)$ and
\[
\int_{z}^{z+\frac{1}{2}}f(s)\textup ds\geq \int_{z}^{z+\frac{1}{2}}f_1(s)\textup ds=1.
\]If $z \in A_n$ for some $n \geq 2$, then for $k \in \{0,\ldots,2^{n-2}-1\}$ we have
\[
 \Bigl(z + \frac{k}{2^{n-1}}, z+ \frac{k}{2^{n-1}} + \frac{1}{2}\cdot\frac{1}{4^{n-1}}\Bigr) \subseteq \Bigl[z, z +\frac{1}{2}\Bigr] 
\]
(for $k \in \{2^{n-2},\ldots,2^{n-1}-1\}$ the above inclusion does not hold), and thus
\[
 \int_z^{z+\frac{1}{2}} f(s)\textup ds = \frac{1}{2} + \int_z^{z+\frac{1}{2}}f_n(s) \textup ds = \frac{1}{2} + 2^{n-2} \cdot \frac{1}{2^{2n-1}}\cdot 2^n = 1.
\]
Moreover, let us note that $\Phi(t)=1+t$ for $t \in [-\frac{1}{4},0]$. Besides, for $t \in [z-\frac{1}{4},z]$, where $z \in \mathbb Z\setminus\{0\}$, we have $\Phi(t)-t \leq \frac{3}{4}$. In fact,
\[
 \int_t^{z+\frac{1}{2}} f(s) \textup ds \geq \int_z^{z+\frac{1}{2}}f(s) \textup ds \geq 1,
\]
and hence $\Phi(t)\leq z+\frac{1}{2}$, which shows that $\Phi(t)-t\leq \frac{3}{4}$ for every $t$ in the considered interval.

If the function $\Psi$ were $\mu$-almost periodic, then for any non-zero $\tau \in \mu E\{\frac{1}{8},\frac{1}{4},\Psi\} \cap \mathbb Z$ (the existence of such a number follows from Proposition~\ref{prop:rel_dens_e_z}), we would have
\[
 \mu \bigl(\set{x \in [-1,0]}{\abs{\Psi(x)-\Psi(x+\tau)}\geq \tfrac{1}{4}}\bigr) \leq \frac{1}{8}.
\]
However, on the other hand, since $\tau \in \mathbb Z \setminus\{0\}$, we have
\[
 [-\tfrac{1}{4},0] \subseteq \set{x \in [-1,0]}{\abs{\Psi(x)-\Psi(x+\tau)}\geq \tfrac{1}{4}},
\]
whence
\[
 \mu\bigl(\set{x \in [-1,0]}{\abs{\Psi(x)-\Psi(x+\tau)}\geq \tfrac{1}{4}}\bigr) \geq \frac{1}{4}.
\] 
The obtained contradiction shows that $\Psi$ is not $\mu$-almost periodic.
\end{example}

Our next example shows that the impression, which one might have, that every LIF (or PI) model driven by a locally integrable $\mu$-almost periodic function which is not $S^1$-almost periodic fails to have almost periodic (in some sense) displacement map is wrong. We construct a purely $\mu$-almost periodic input which gives rise to a $S^p$-almost periodic displacement map.

\begin{example}
Let us consider the following sets: $A_n = 2^n \mathbb Z + s_n$, where $s_n =\frac{1}{3}\bigl[(-2)^{n-1}-1\bigr]$ for $n \in \mathbb N$, and let us note that
\begin{equation}\label{eq:suma_an}
 \text{$A_n \cap A_m =\emptyset$ if $n\neq m$} \qquad \text{and} \qquad \bigcup_{n=1}^{\infty} A_n =\mathbb Z.
\end{equation}
Moreover, for $n \in \mathbb N$ and $z \in \mathbb Z$ put $B_n(z)=(z+1-\frac{1}{n+1},z+1)$, and for each fixed $n \in \mathbb N$ let us define the locally integrable $2^n$-periodic function $f_n \colon \mathbb R \to \mathbb R$ by the following formula
\[
 f_n(x)=\begin{cases}
          (n+1)^2, \quad & \text{if $x \in \bigcup_{z \in A_n}B_n(z)$},\\
					 0, \quad & \text{otherwise}.
				\end{cases}
\]
Similarly to Example~\ref{ex:mu_no_mu}, it can be shown that the locally integrable function $f \colon \mathbb R \to \mathbb R$, given by
\[
 f(x) = 2+\sum_{n=1}^{\infty} f_n(x) \qquad \text{for $x \in \mathbb R$},
\]
is $\mu$-almost periodic as the $D$-limit of a sequence of periodic functions. Let us note that $f$ is well-defined, since $\bigl(\bigcup_{z \in A_n}B_n(z)\bigr) \cap \bigl(\bigcup_{w \in A_m}B_m(w)\bigr)=\emptyset$ if $n\neq m$. 

It is easy to see that the function $f$ is not $S^1$-bounded, and hence it cannot be $S^1$-almost periodic (cf.~Remark~\ref{rem:sp_bounded}).

Our aim is now to show that the displacement map $\Psi$ corresponding to the PI model driven by the $\mu$-almost periodic function $f$, which clearly is well-defined for every $t \in \mathbb R$ and Lebesgue measurable, is also $\mu$-almost periodic. In fact, we are going to show that for a given $m\in \{2,3,\ldots\}$ we have
\begin{equation}\label{eq:mu_psi}
 \sup_{z \in \mathbb Z} \mu\bigl( \set{t \in [z,z+1]}{\abs{\Psi(t+2^m w)-\Psi(t)} \geq \tfrac{2}{m+1}}\bigr) \leq \frac{2}{m+1} \quad \text{for every $w \in \mathbb Z$},
\end{equation}
which would imply that $2^m \mathbb Z \subseteq \mu E\{\frac{4}{m+1}, \frac{2}{m+1}, \Psi\}$ (cf. Remark~\ref{lem:szacowanie_mu}). In particular, this would mean that for every $\varepsilon, \eta>0$ the set $\mu E\{\varepsilon, \eta, \Psi\}$ is relatively dense.

Observe also that in order to prove~\eqref{eq:mu_psi} with fixed $m  \geq 2$ and $w \in \mathbb Z$, it suffices to show that
\begin{align}\label{eq:warunek}
\begin{split}  
 & \text{for every $z \in \mathbb Z$ we have $\abs{\Psi(t+2^m w) - \Psi(t)} \leq \frac{1}{m+1}$,}\\
 &\hspace{0cm}\text{whenever $t \in [z, z+\tfrac{1}{2}-\tfrac{1}{m+1}]\cup [z+\tfrac{1}{2},z+1-\tfrac{1}{m+1})$},
\end{split}
\end{align}
since then
\begin{multline*}
 \mu\bigl(\bset{t \in [z,z+1]}{\abs{\Psi(t+2^{m} w)-\Psi(t)} \geq \tfrac{2}{m+1}} \bigr)  \leq\\[2mm]
 \leq  \mu\bigl(\bset{t \in [z,z+1]}{\abs{\Psi(t+2^{m} w)-\Psi(t)} > \tfrac{1}{m+1}} \bigr) \leq \frac{2}{m+1}.
\end{multline*}
So let us fix $m \in \{2,3,\ldots\}$ and $w \in \mathbb Z$, and take arbitrary $z \in \mathbb Z$. By~\eqref{eq:suma_an} there exists exactly one $n \in \mathbb N$ such that the integer $z$ belongs to $A_n$.
\begin{enumerate}[fullwidth, label={Case \arabic*:}]
 \item Suppose that $n \leq m$. Then, because $2^n|2^m$, we infer that $f(t+2^m w) = f(t)$ for every $t \in [z,z+\tfrac{3}{2}]$. Moreover, in view of the fact that
\begin{equation}\label{eq:phi_1}
  \int_k^{k+\frac{1}{2}} f(x)\textup dx=1 \qquad \text{for $k \in \mathbb Z$},
\end{equation}
we get $\Phi(t) \leq \Phi(z+1) \leq z +\frac{3}{2}$ for every $t \in [z,z+1]$. Hence,
\[
 \int_{t+2^m w}^{\Phi(t) + 2^m w} f(x) \textup dx = \int_t^{\Phi(t)} f(x+2^m w) \textup dx = \int_t^{\Phi(t)} f(x) \textup dx=1 \quad \text{for $t \in [z,z+1]$},
\]
which proves that $\Phi(t+2^m w) = \Phi(t)+2^m w$ for $t \in [z,z+1]$. This, in turn, shows that the condition in ~\eqref{eq:warunek} is satisfied if $z \in A_n$ with $n\leq m$.

\item Suppose now that $n >m$. First, observe that $\Phi(t)=t+\frac{1}{2}$ for $t \in [z,z+\frac{1}{2}-\frac{1}{n+1}]$, since on the interval $[z,z+1-\frac{1}{n+1}]$ the function $f$ is identically equal to $2$. Furthermore, $\Phi(t) \in [z+1-\frac{1}{n+1},z+1]$ for $t \in [z+\frac{1}{2}-\frac{1}{n+1},z+1-\frac{1}{n+1})$, which is a consequence of the following estimates:
\[
 \int_t^{z+1-\frac{1}{n+1}-\gamma} f(x) \textup dx \leq \int_{z+\frac{1}{2}-\frac{1}{n+1}}^{z+1-\frac{1}{n+1}-\gamma} f(x) \textup dx =1-2\gamma<1 \qquad \text{for every $\gamma \in (0, \tfrac{1}{2})$}
\]
and
\begin{equation}\label{eq:phi_z_1}
  \int_t^{z+1} f(x) \textup dx \geq \int_{z+1-\frac{1}{n+1}}^{z+1} f(x)\textup dx=n+1+\tfrac{2}{n+1}>1.
\end{equation} 
An easy computation shows that $z+2^m w \notin \bigcup_{i=1}^m A_i$, and thus there exists a positive integer $j >m$ such that $z+2^m w \in A_j$. Therefore, by the above reasoning, we obtain that
\[
 \Phi(t+2^m w) = t+2^m w +\frac{1}{2} \qquad \text{for $t \in [z,z+\tfrac{1}{2}-\tfrac{1}{j+1}]$}
\]
and
\[
 \Phi(t+2^m w)-2^m w \in [z+1-\tfrac{1}{j+1},z+1] \qquad \text{for $t \in [z+\tfrac{1}{2}-\tfrac{1}{j+1}, z+1-\tfrac{1}{j+1})$}.
\]
In conclusion, for $t \in [z, z+\tfrac{1}{2}-\tfrac{1}{m+1}]$ we have $\Phi(t+2^m w) - 2^m w - \Phi(t) = 0$, and moreover, for $t \in [z+\tfrac{1}{2}, z+1-\tfrac{1}{m+1})$ we have $\abs{\Phi(t+2^m w)-2^m w - \Phi(t)}\leq \tfrac{1}{m+1}$.
This proves that in the considered case the condition~\eqref{eq:warunek} is satisfied. 
\end{enumerate}

Therefore, the displacement map $\Psi$ is $\mu$-almost periodic, and since $\Psi$ is bounded (cf. the proof of Theorem~\ref{lem:uniform_con_bound}), by Remark~\ref{rem:mu_S}, we see that $\Psi$ is $S^p$-almost periodic for every $p \in [1,+\infty)$.

Finally, we are going to show that the function $\Psi$ is not uniformly continuous, and hence it cannot be uniformly almost periodic. If $z \in A_n$, then $\Phi(z+1-\frac{1}{n+1})\leq z+1$ and $\Phi(z+1)=z+\frac{3}{2}$ (cf. the formulas~~\eqref{eq:phi_1}~and~\eqref{eq:phi_z_1}), and thus
\[
 \abs{\Phi(z+1-\tfrac{1}{n+1})-\Phi(z+1)} \geq \frac{1}{2},
\]
which clearly shows that $\Psi$ is not uniformly continuous.
\end{example}

\begin{remark}
Let us add that the characterization of those locally integrable $\mu$-almost periodic inputs for which the LIF model~\eqref{lif4}--\eqref{lif_spiking} has $\mu$-almost periodic (or $S^p$-almost periodic) displacement map is still open. 
\end{remark}

We end this section with a continuity result for displacement maps corresponding to LIF models driven by $S^1$-almost periodic functions.

For simplicity, let us put
\[
 C=\set{f \in S^1(\mathbb R)}{\text{there exists $a_f>0$ such that $f(t)-\sigma>a_f$ for a.e. $t \in \mathbb R$}},
\]
and let us observe that $C$ is a convex set. Clearly, $C$ is a metric space with the metric induced by the $S^1$-norm. 

\begin{theorem}\label{oT}
The mapping $T \colon C \to AP(\mathbb R)$, which to every $S^1$-almost periodic function $f \in C$ assigns the displacement $\Psi$ of the firing map $\Phi$ corresponding to the LIF model~\eqref{lif4}--\eqref{lif_spiking} driven by $f$, is continuous.
\end{theorem}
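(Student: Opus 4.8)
The plan is to prove that $T$ is in fact \emph{locally Lipschitz}, which of course yields continuity. Fix $f \in C$ and put $\varsigma \zdef a_f > 0$, so that $f(t) - \sigma \geq \varsigma$ for a.e. $t \in \mathbb R$; by Theorem~\ref{new almost periodic displacement} the displacement $Tf$ indeed belongs to $AP(\mathbb R)$, and (exactly as in the proof of Theorem~\ref{lem:uniform_con_bound}) the corresponding firing map satisfies $\Phi_f(t) - t \leq 1/\varsigma$ for all $t$. The difficulty is that for a competitor $g \in C$ close to $f$ in the $S^1$-norm one has no control over the constant $a_g$: the essential infimum of $g - \sigma$ may be arbitrarily small, and $g - \sigma$ may even be negative on a set of small measure, so the bound $\Phi_g(t) - t \leq 1/a_g$ is worthless. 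The first --- and main --- step is therefore to obtain a bound on $\Phi_g(t) - t$ that is uniform over an entire $S^1$-ball around $f$.

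To do this, I would take $g \in C$ with $\norm{f - g}_{S^1_1} \leq \varsigma/2$ and note that for every $t \in \mathbb R$
\[
 \int_t^{t+1}\bigl(g(u) - \sigma\bigr)\,\ud u \;\geq\; \int_t^{t+1}\bigl(f(u) - \sigma\bigr)\,\ud u - \norm{f - g}_{S^1_1} \;\geq\; \frac{\varsigma}{2},
\]
whence $\int_t^{t+L}\bigl(g(u)-\sigma\bigr)\,\ud u \geq 1$ with $L \zdef \lceil 2/\varsigma\rceil$. Denoting by $x(\,\cdot\,;t,0)$ the solution of $\dot x = -\sigma x + g$ issuing from $(t,0)$, one has $\tfrac{\ud}{\ud u}\bigl(x(u;t,0) - \int_t^u(g(w)-\sigma)\,\ud w\bigr) = \sigma\bigl(1 - x(u;t,0)\bigr) \geq 0$ as long as $x(\,\cdot\,;t,0) \leq 1$, so $x(u;t,0) \geq \int_t^u(g(w)-\sigma)\,\ud w$ up to the firing time; consequently the trajectory must reach the threshold no later than $t + L$. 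Hence $\Phi_g$ is well-defined on $\mathbb R$ and $\Phi_g(t) - t \leq L$ for every $t$, uniformly over all such $g$.

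Granting this, I would fix $t \in \mathbb R$ and subtract the two instances of~\eqref{implicitnafiringdlalif},
\[
 1 = \int_t^{\Phi_f(t)}\bigl(f(u)-\sigma\bigr)\ue^{\sigma(u-t)}\,\ud u = \int_t^{\Phi_g(t)}\bigl(g(u)-\sigma\bigr)\ue^{\sigma(u-t)}\,\ud u,
\]
which, assuming first that $\Phi_f(t) \leq \Phi_g(t)$, rearranges to
\[
 \int_{\Phi_f(t)}^{\Phi_g(t)}\bigl(f(u)-\sigma\bigr)\ue^{\sigma(u-t)}\,\ud u = \int_t^{\Phi_g(t)}\bigl(f(u)-g(u)\bigr)\ue^{\sigma(u-t)}\,\ud u.
\]
On the left, $\ue^{\sigma(u-t)} \geq 1$ and $f - \sigma \geq \varsigma$, so the left side is at least $\varsigma\,\abs{\Phi_f(t) - \Phi_g(t)}$; on the right, the integral runs over $[t,\Phi_g(t)]$, an interval of length at most $L$ on which $\ue^{\sigma(u-t)} \leq \ue^{\sigma L}$, so the right side is at most $\ue^{\sigma L}L\,\norm{f-g}_{S^1_1}$ (cover $[t,\Phi_g(t)]$ by $L$ unit intervals). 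The case $\Phi_g(t) \leq \Phi_f(t)$ is handled the same way and yields the same bound. Therefore
\[
 \bigl\|Tf - Tg\bigr\|_{\infty} = \sup_{t \in \mathbb R}\abs{\Phi_f(t) - \Phi_g(t)} \leq \frac{L\,\ue^{\sigma L}}{\varsigma}\,\norm{f - g}_{S^1_1} \qquad \text{whenever } \norm{f - g}_{S^1_1} \leq \frac{\varsigma}{2},
\]
which establishes local Lipschitz continuity of $T$ at $f$, hence continuity of $T$ on $C$; note that the Lipschitz constant degrades as $a_f \to 0$, which is why one only gets continuity on $C$.

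The one genuinely delicate point is the second paragraph: the bound $\Phi_g(t) - t \leq L$ that is simultaneously uniform in $t$ and in $g$ over a neighbourhood of $f$. It is exactly this uniformity --- coming from the fact that $S^1$-closeness to $f$ forces the \emph{local averages} of $g - \sigma$ (not $g - \sigma$ itself) to stay bounded away from $0$ --- that makes the $S^1$-estimate of the last paragraph work at every $t$. Once it is in place, the remainder is a routine manipulation of the two implicit equations together with the a~priori bounds on $\Phi_f$ and $\Phi_g$.
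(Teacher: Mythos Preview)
Your proof is correct and in fact establishes the stronger statement that $T$ is locally Lipschitz. One small inaccuracy in the motivation: since $g \in C$ by hypothesis, $g-\sigma$ is strictly positive a.e.\ and cannot be ``negative on a set of small measure''; the only genuine danger is that $a_g$ may be tiny, which is exactly what your uniform bound $\Phi_g(t)-t\leq L$ circumvents. That bound, obtained from the comparison $x(u;t,0)\geq\int_t^u(g-\sigma)$ via the differential inequality, is the novel ingredient.

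The paper's argument proceeds differently and does not produce a Lipschitz estimate. Instead of first securing a uniform bound on $\Phi_g(t)-t$, it treats the two orderings asymmetrically: when $\Phi_g(t)\leq\Phi_f(t)$ it simply uses $\Phi_g(t)-t\leq\Phi_f(t)-t\leq 1/a_f$, while when $\Phi_g(t)\geq\Phi_f(t)$ it sidesteps any bound on $\Phi_g(t)$ by showing directly that $\int_t^{\Phi_f(t)+\varepsilon}(g(u)-\sigma)\ue^{\sigma u}\,\ud u\geq \ue^{\sigma t}$, forcing $\Phi_g(t)\leq\Phi_f(t)+\varepsilon$. Your route is more symmetric, gives a quantitative modulus, and the ODE comparison is a pleasant and reusable observation; the paper's route is shorter and needs no auxiliary lemma but yields only continuity.
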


\begin{proof}
First, let us observe that $T$ is well-defined thanks to Theorem~\ref{new almost periodic displacement}.

Given $\varepsilon \in (0,1)$ and $f \in C$ let $\delta=\varepsilon a_f \bigl(\bigl[\frac{1}{a_f}\bigr]+2\bigr)^{-1} e^{-\sigma(1+1/a_f)}$, and suppose that $\widehat{f}$ is a function in $C$ such that $\bnorm{f-\widehat{f}}_{S^1_1} \leq \delta$. If $t \in \mathbb R$ is fixed, then by the definition of the firing map, we have
\begin{equation}\label{eq:rownosc}
 \int_{t}^{\Phi(t)}\bigl(f(u)-\sigma\bigr)\ue^{\sigma u}\; \textup du=\int_{t}^{\widehat{\Phi}(t)}\bigl(\widehat{f}(u)-\sigma\bigr)\ue^{\sigma u}\;\textup du
\end{equation}
(cf. the formula~\eqref{implicitnafiringdlalif}); here $\widehat{\Phi}$ denotes the firing map corresponding to the LIF model driven by $\widehat{f}$.

Suppose that $\Phi(t)\geq \widehat{\Phi}(t)$. Then, since $\widehat{\Phi}(t)-t\leq \Phi(t)-t \leq 1/a_f$ (cf. the first part of the proof of Theorem~\ref{lem:uniform_con_bound}),  from~\eqref{eq:rownosc} it follows that
\begin{align*}
\int_{\widehat{\Phi}(t)}^{\Phi(t)} \bigl(f(u)-\sigma\bigr)\ue^{\sigma u}\; \textup du&=\int_{t}^{\widehat{\Phi}(t)} \bigl(\widehat{f}(u)-f(u)\bigr)\ue^{\sigma u}\; \textup du\\
 & \leq \ue^{\sigma \widehat{\Phi}(t)} \int_{t}^{t+[\frac{1}{a_f}]+1}\abs{\widehat{f}(u)-f(u)}\; \textup du \leq \ue^{\sigma \widehat{\Phi}(t)}\bigl(\bigl[\tfrac{1}{a_f}\bigr]+1\bigr)\delta.
\end{align*}
Simultaneously,
\begin{equation*}
\int_{\widehat{\Phi}(t)}^{\Phi(t)} \big(f(u)-\sigma\bigr)\ue^{\sigma u}\; \textup du \geq a_f\ue^{\sigma \widehat{\Phi}(t)}\bigl(\Phi(t)-\widehat{\Phi}(t)\bigr).
\end{equation*}
Thus, $\Phi(t)-\widehat{\Phi}(t) \leq \frac{1}{a_f} \bigl(\bigl[\frac{1}{a_f}\bigr]+2\bigr) \delta\leq\varepsilon$.

Now, let us assume that $\widehat{\Phi}(t)\geq \Phi(t)$. Then
\begin{align*}
 \int_{t}^{\Phi(t)+\varepsilon} \bigl(\widehat{f}(u)-\sigma\bigr) e^{\sigma u} \textup du & =  \int_{t}^{\Phi(t)+\varepsilon} \bigl(\widehat{f}(u)-f(u)\bigr) e^{\sigma u} \textup du +e^{\sigma t}+  \int_{\Phi(t)}^{\Phi(t)+\varepsilon} \bigl(f(u)-\sigma\bigr) e^{\sigma u} \textup du\\
 & \geq -  \int_{t}^{\Phi(t)+1} \abs{\widehat{f}(u)-f(u)} e^{\sigma u} \textup du + e^{\sigma t} + a_f \varepsilon e^{\sigma \Phi(t)}\\
 & \geq - e^{\sigma(\Phi(t)+1)}  \int_{t}^{\Phi(t)+1} \abs{\widehat{f}(u)-f(u)}\textup du + e^{\sigma t} + a_f \varepsilon e^{\sigma \Phi(t)}\\
 & \geq - e^{\sigma(t+1+1/a_f)}  \int_{t}^{t+[\frac{1}{a_f}]+2} \abs{\widehat{f}(u)-f(u)}\textup du + e^{\sigma t} + a_f \varepsilon e^{\sigma t}\\
 & \geq e^{\sigma t}\Bigl(1+a_f \varepsilon - \bigl(\bigl[\tfrac{1}{a_f}\bigr]+2\bigr) e^{\sigma(1+1/a_f)}\delta  \Bigr) =e^{\sigma t},
\end{align*}
which shows that $\widehat{\Phi}(t) \leq \Phi(t) + \varepsilon$.

Summing up, we have shown that for a given $t \in \mathbb R$ we have $\abs{\Phi(t)-\widehat{\Phi}(t)}\leq \varepsilon$. However, let us observe that the chosen $\delta$ does not depend on $t$, and therefore, we may conclude that $\sup_{t \in \mathbb R}\abs{\Phi(t)-\widehat{\Phi}(t)}\leq \varepsilon$. To end the proof it suffices to note that $\sup_{t \in \mathbb R}\abs{\Phi(t)-\widehat{\Phi}(t)}=\sup_{t \in \mathbb R}\abs{\Psi(t)-\widehat{\Psi}(t)}$, where $\Psi$ and $\widehat{\Psi}$ denote the displacements of the firing maps $\Phi$ and $\widehat{\Phi}$, respectively.
\end{proof}

\section{Applications to the theory of almost periodicity}
\label{sec:5}

In this very short section we give an example of an application of the results established in Section~\ref{sec:phi_and_psi} to the theory of almost periodic functions.

Let us recall the following characterization of $S^1$-almost periodic functions.

\begin{theorem}[see~\cites{B-N,S}]\label{thm:BN_s1}
Suppose that $f \in L^1_{\text{loc}}(\mathbb R)$ is a $\mu$-almost periodic function. Then $f$ is $S^1$-almost periodic if and only if
\begin{equation}\label{eq:step_mu}
 \sup_{u \in \mathbb R} \sup_{\substack{A \subseteq [u,u+1]\\[1pt] \mu(A)\leq \delta}} \int_A \abs{f(t)}\textup dt \to 0 \qquad \text{as $\delta \to 0^+$}.
\end{equation}
\end{theorem}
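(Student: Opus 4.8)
The plan is to prove the two implications separately; both are short once the tools of Section~\ref{sec:2} are in hand. For the necessity part (that $S^1$-almost periodicity forces~\eqref{eq:step_mu}) I would use the fact recalled in Remark~\ref{rem:s_p_s_1} that $S^1(\mathbb R)$ is the $\norm{\cdot}_{S^1_1}$-closure of the generalized trigonometric polynomials. Given $\varepsilon>0$, choose such a polynomial $P$ with $\norm{f-P}_{S^1_1}<\tfrac12\varepsilon$ and set $M\zdef 1+\sup_{\mathbb R}\abs{P}$. Then for any $u\in\mathbb R$ and any measurable $A\subseteq[u,u+1]$ with $\mu(A)\leq\delta$ one has $\int_A\abs{f}\leq\int_u^{u+1}\abs{f-P}+M\delta\leq\norm{f-P}_{S^1_1}+M\delta$, which is $<\varepsilon$ as soon as $\delta\leq\varepsilon/(2M)$; taking the supremum over $u$ and $A$ gives~\eqref{eq:step_mu}. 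No $\mu$-almost periodicity is needed here.

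For the sufficiency part I would not try to build trigonometric approximants but instead verify the definition of $S^1$-almost periodicity directly, by showing that for each $\varepsilon>0$ there are $\delta_0,\eta>0$ with $\mu E\{\delta_0,\eta,f\}\subseteq S^1E\{\varepsilon,f\}$; since $f$ is $\mu$-almost periodic the left-hand set is relatively dense, hence so is the right-hand one, and $f$ is $S^1$-almost periodic. Concretely, given $\varepsilon>0$ I would first use~\eqref{eq:step_mu} to fix $\delta_0>0$ so that the supremum in~\eqref{eq:step_mu} is at most $\tfrac14\varepsilon$, and put $\eta\zdef\tfrac14\varepsilon$. For $\tau\in\mu E\{\delta_0,\eta,f\}$ and a fixed $u\in\mathbb R$, split $[u,u+1]$ into the ``bad'' set $E\zdef\set{t\in[u,u+1]}{\abs{f(t+\tau)-f(t)}\geq\eta}$, which has measure $\leq\delta_0$ by the choice of $\tau$, and its complement; on the complement the integrand is $<\eta$, while on $E$ one bounds $\int_E\abs{f(\cdot+\tau)-f}\leq\int_E\abs{f(\cdot+\tau)}+\int_E\abs{f}$ and estimates each of the two terms by $\tfrac14\varepsilon$ using~\eqref{eq:step_mu}. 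Summing the three contributions gives $\norm{f^\tau-f}_{S^1_1}\leq\tfrac34\varepsilon<\varepsilon$, which is exactly $\tau\in S^1E\{\varepsilon,f\}$.

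The argument is essentially routine, and the only point I would be careful about — the ``main obstacle'', though a mild one — is the two-sided use of~\eqref{eq:step_mu}: to estimate $\int_E\abs{f(t+\tau)}\,\textup dt=\int_{E+\tau}\abs{f(s)}\,\textup ds$ one must observe that the translate $E+\tau$ still lies inside a unit interval, namely $[u+\tau,u+\tau+1]$, and still has measure $\leq\delta_0$, so~\eqref{eq:step_mu} applies to it precisely because the supremum over $u$ there ranges over all unit intervals. Conceptually this is exactly where~\eqref{eq:step_mu} is indispensable: $\mu$-almost periodicity only guarantees that $f^\tau$ and $f$ differ substantially on a set of small \emph{measure}, and~\eqref{eq:step_mu} is the additional input needed to turn ``small measure'' into ``small integral'' — without it there is no reason for $f^\tau$ to be $S^1$-close to $f$, consistently with Remark~\ref{rem:mu_S} and Example~\ref{ex:3.4}.
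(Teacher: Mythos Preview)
Your proof is correct. Note, however, that the paper does not actually prove Theorem~\ref{thm:BN_s1}: it is stated there as a known characterization and attributed to~\cites{B-N,S}, so there is no in-paper argument to compare against. Your approach is the natural one and matches the standard proofs found in those references: approximate by bounded functions (in your case, trigonometric polynomials) for the necessity, and for the sufficiency split each unit interval into the ``bad'' set where $\abs{f^\tau-f}\geq\eta$ and its complement, then use~\eqref{eq:step_mu} to turn the small measure of the bad set into a small integral. The only delicate step---that the translate $E+\tau$ is again a subset of some unit interval with the same small measure, so that~\eqref{eq:step_mu} applies to $\int_E\abs{f^\tau}$ as well---you have identified and handled correctly.
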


\begin{remark}\label{rem:s1_mu_ap}
Obviously, in~\eqref{eq:step_mu} we can replace the interval $[u,u+1]$ with any closed interval of arbitrary (but fixed) length.
\end{remark}

To the best of our knowledge, so far it has not been known whether the condition~\eqref{eq:step_mu} can be simplified, that is, whether it is possible to consider only intervals $[u,u+\delta]$ instead of any Lebesgue measurable set $A \subseteq [u,u+1]$ with $\mu(A)\leq \delta$. Our next result gives the answer to this question.

\begin{theorem}\label{thm:kontrprzyklad_warunek}
There exists a locally integrable $\mu$-almost periodic function satisfying the condition
\begin{equation}\label{eq:warunek_B_N}
 \sup_{u \in \mathbb R} \int_u^{u+\delta} \abs{f(t)} \textup dt \to 0 \qquad \text{as $\delta \to 0^+$} 
\end{equation}
which is not $S^1$-almost periodic.
\end{theorem}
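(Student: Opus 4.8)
The plan is to observe that the function $f$ already constructed in Example~\ref{ex:mu_no_mu} witnesses the assertion, so that no fresh construction is needed. First I would recall that the function $f = 1 + \sum_{n=1}^{\infty} f_n$ considered there is locally integrable, $\mu$-almost periodic (being the $D$-limit of a sequence of periodic functions), and almost everywhere non-negative, whence $\abs{f} = f$; consequently the condition~\eqref{eq:warunek_B_N} is literally the assumption~\ref{it:mu_s1} of Theorem~\ref{lem:uniform_con_bound}, which was verified for this very $f$ in Example~\ref{ex:mu_no_mu} by means of the estimate~\eqref{eq:szacowanie2}. Thus $f$ satisfies~\eqref{eq:warunek_B_N}, and it only remains to check that $f$ is not $S^1$-almost periodic.

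For the latter I would invoke Theorem~\ref{thm:BN_s1} and show that $f$ violates~\eqref{eq:step_mu}. Fix $n \in \mathbb N$, pick any $z_0 \in A_n$, and put $A^{(n)} = \bigcup_{k=0}^{2^{n-1}-1} B_{n,k}(z_0)$; by the definitions in~\eqref{eq:b_n_c_n} these intervals are pairwise disjoint and contained in $(z_0, z_0 + 1)$, each of length $\tfrac{1}{2}\cdot\tfrac{1}{4^{n-1}}$, so that $\mu\bigl(A^{(n)}\bigr) = 2^{-n}$, while, since $f \geq f_n$ and $f_n \equiv 2^n$ on $A^{(n)} \subseteq C_n$, we get $\int_{A^{(n)}} f \geq \int_{A^{(n)}} f_n = 2^n \cdot \mu\bigl(A^{(n)}\bigr) = 1$. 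Hence for $\delta = 2^{-n}$ the double supremum appearing on the left-hand side of~\eqref{eq:step_mu} is at least $1$; letting $n \to \infty$ shows that this quantity does not tend to $0$, so by Theorem~\ref{thm:BN_s1} the $\mu$-almost periodic function $f$ fails to be $S^1$-almost periodic, which completes the argument.

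A conceptually shorter route to the same conclusion, which I would mention as a remark, bypasses Theorem~\ref{thm:BN_s1} entirely: were $f$ $S^1$-almost periodic, then, since $f(t) - 0 \geq 1 > \tfrac{1}{2}$ for almost every $t$, Theorem~\ref{new almost periodic displacement} applied to the PI model ($\sigma = 0$) would force the displacement map $\Psi$ driven by $f$ to be uniformly almost periodic, hence $\mu$-almost periodic, contradicting the conclusion reached at the end of Example~\ref{ex:mu_no_mu}. I do not foresee a genuine obstacle here; the only points requiring a little care are the elementary bookkeeping that $\mu\bigl(A^{(n)}\bigr) = 2^{-n}$ and $\int_{A^{(n)}} f_n = 1$, and the trivial but essential observation that the non-negativity of $f$ makes~\eqref{eq:warunek_B_N} coincide with~\ref{it:mu_s1}, so that all the needed work has in effect already been done in Example~\ref{ex:mu_no_mu}.
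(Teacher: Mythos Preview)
Your proposal is correct, and in fact you give two valid routes. The one you present as your main argument---showing directly that $f$ violates~\eqref{eq:step_mu} by exhibiting the sets $A^{(n)}\subseteq[z_0,z_0+1]$ of measure $2^{-n}$ carrying unit integral mass, and then invoking Theorem~\ref{thm:BN_s1}---is perfectly sound (the computations $\mu(A^{(n)})=2^{-n}$ and $\int_{A^{(n)}}f_n=1$ are indeed elementary), and it is genuinely different from the paper's proof. The paper instead takes precisely the route you relegate to a remark: it argues by contradiction via Theorem~\ref{new almost periodic displacement}, noting that if $f$ were $S^1$-almost periodic then the displacement $\Psi$ of the associated PI model would be uniformly almost periodic, contrary to the conclusion of Example~\ref{ex:mu_no_mu}.

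The two approaches buy different things. Your direct argument is more self-contained and arguably more natural from a pure almost-periodic-function-theory standpoint: it stays entirely within the circle of ideas of Theorem~\ref{thm:BN_s1} and needs nothing from the integrate-and-fire machinery. The paper's route, on the other hand, is chosen deliberately to match the title of Section~\ref{sec:5} (``Applications to the theory of almost periodicity''): the whole point there is to demonstrate that the LIF/PI displacement results of Section~\ref{sec:phi_and_psi} have genuine consequences for the theory of almost periodic functions. So while your main argument is cleaner in isolation, swapping the roles---leading with the displacement-map contradiction and mentioning the direct verification of the failure of~\eqref{eq:step_mu} as the remark---would fit the narrative of the section better.
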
  

\begin{proof}
Let us consider the locally integrable $\mu$-almost periodic function $f \colon \mathbb R \to \mathbb R$ defined in Example~\ref{ex:mu_no_mu}. It was shown that $f$ satisfies the condition~\eqref{eq:warunek_B_N}. If $f$ were $S^1$-almost periodic, then due to Theorem~\ref{new almost periodic displacement}, the displacement map $\Psi$ corresponding to the PI model driven by the function $f$ would be uniformly almost periodic. However, it was shown in Example~\ref{ex:mu_no_mu} that $\Psi$ is not $\mu$-almost periodic, and hence it cannot be uniformly almost periodic. The obtained contradiction proves our claim.
\end{proof}

\section{Firing rate for the LIF model}
\label{sec:firing_rate}

The following section is devoted to the study of the firing rate for the LIF model. We start with the following

\begin{definition}
Let $t \in \mathbb R$. The limit
\[
  \FR(t) = \lim_{n \to \infty}\frac{n}{\Phi^n(t)}
\]
(whenever it exists) is called the \emph{firing rate} of the LIF model~\eqref{lif4}--\eqref{lif_spiking}; here $\Phi^n$ denotes the $n$-th iterate of the firing map corresponding to the LIF model.
\end{definition}

\begin{remark}\label{rem:zdefiniowanie}
Let us note that in the definition of the firing rate $\FR(t)$ we implicitly assume that all the iterates $\Phi^n(t)$ are well-defined. In particular, this means that the firing map $\Phi$ must be well-defined for every $t \in \mathbb R$ (cf. Remark~\ref{rem:Phi_w_d_2}).
\end{remark}

Before we proceed further, let us recall a result on the existence of the firing rate for the PI model.

\begin{theorem}[cf.~\cite{wmjs1}*{Theorem~3.2} and~\cite{brette1}*{Theorem 4}]\label{twpispecial}
Suppose that $\sigma=0$ and that $f \in L^1_{\text{loc}}(\mathbb R)$ is such that the firing map $\Phi$ corresponding to the PI model is well-defined for every $t \in \mathbb R$. If $\mathcal M \{f\}$ exists, then for every $t \in \mathbb R$ the firing rate $\FR(t)$ for the PI model also exists, and moreover $\FR(t)=\mathcal M \{f\}\in [0,+\infty]$ for $t \in \mathbb R$.
\end{theorem}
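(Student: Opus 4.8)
The plan is to reduce the statement to the elementary identity satisfied by the firing map of the perfect integrator. When $\sigma=0$, the implicit equation~\eqref{implicitnafiringdlalif} becomes $\int_t^{\Phi(t)}f(u)\,\textup du=1$ for every $t\in\mathbb R$ (here we use that $\Phi$, and hence every iterate $\Phi^n$, is well-defined everywhere, which is part of the hypothesis; cf.~Remark~\ref{rem:zdefiniowanie}). Summing the $n$ equalities $\int_{\Phi^{i-1}(t)}^{\Phi^{i}(t)}f(u)\,\textup du=1$ for $i=1,\dots,n$ and telescoping (with the convention $\Phi^0(t)=t$), one obtains
\[
 \int_t^{\Phi^n(t)}f(u)\,\textup du=n\qquad\text{for every $t\in\mathbb R$ and every $n\in\mathbb N$.}
\]
This is the only point where the special form of the PI model enters; it is also why the passage to $\sigma>0$ (Theorem~\ref{thm:LIF_rotation}) is not a triviality, as the exponential weight $\ue^{\sigma u}$ there destroys the telescoping.

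Next I would check that $\Phi^n(t)\to+\infty$ as $n\to\infty$. The sequence $\bigl(\Phi^n(t)\bigr)_{n}$ is strictly increasing, since local integrability of $f$ forces $\int_t^{t_\ast}f(u)\,\textup du\to0$ as $t_\ast\to t^+$, whence $\Phi(s)>s$ for all $s$; and it is unbounded, because if $\Phi^n(t)\le a$ held for all $n$, then $n=\int_t^{\Phi^n(t)}f(u)\,\textup du\le\int_t^a\abs{f(u)}\,\textup du<+\infty$ for every $n$, a contradiction. (This is exactly the reasoning used in the proof of Proposition~\ref{poprawneokr eslenie}.) Hence $S_n\zdef\Phi^n(t)-t\to+\infty$.

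Using the boxed identity, I would then rewrite
\[
 \frac{n}{\Phi^n(t)-t}=\frac{1}{S_n}\int_t^{t+S_n}f(u)\,\textup du.
\]
If $\mathcal M\{f\}$ is finite, then~\eqref{eq:srednia_alpha} shows that the right-hand side tends to $\mathcal M\{f\}$ along the sequence $S_n\to+\infty$. If $\mathcal M\{f\}=+\infty$, one argues directly: writing $\frac1T\int_t^{t+T}f=\frac{t+T}{T}\cdot\frac{1}{t+T}\int_0^{t+T}f-\frac1T\int_0^tf$ and letting $T\to+\infty$ gives $\frac1T\int_t^{t+T}f\to+\infty$ as well, so again $\frac{n}{\Phi^n(t)-t}\to\mathcal M\{f\}$. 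Finally, since $\Phi^n(t)\to+\infty$,
\[
 \frac{n}{\Phi^n(t)}=\frac{n}{\Phi^n(t)-t}\cdot\Bigl(1-\frac{t}{\Phi^n(t)}\Bigr)\longrightarrow\mathcal M\{f\},
\]
which is precisely the assertion that $\FR(t)$ exists and equals $\mathcal M\{f\}$. Moreover, each $\frac{n}{\Phi^n(t)-t}$ is non-negative, so its limit $\mathcal M\{f\}$ automatically lies in $[0,+\infty]$, as stated.

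There is no serious obstacle here; the two points requiring a little care are that~\eqref{eq:srednia_alpha} is stated only for functions with finite mean value (so the case $\mathcal M\{f\}=+\infty$ must be disposed of by hand, as above) and the harmless translation by $t$ when passing from the average of $f$ over $[t,\Phi^n(t)]$ back to the ratio $n/\Phi^n(t)$. The genuinely delicate part of the firing-rate problem is the case $\sigma>0$ treated in Theorem~\ref{thm:LIF_rotation}, where the clean identity $\int_t^{\Phi^n(t)}f(u)\,\textup du=n$ is no longer available.
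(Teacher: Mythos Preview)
The paper does not supply its own proof of Theorem~\ref{twpispecial}; the result is recalled from~\cite{wmjs1} and~\cite{brette1}. Your argument is correct and is essentially the standard one: for $\sigma=0$ the telescoping identity $\int_t^{\Phi^n(t)}f(u)\,\textup du=n$ together with $\Phi^n(t)\to+\infty$ reduces the claim directly to the existence of $\mathcal M\{f\}$ via~\eqref{eq:srednia_alpha}, with the infinite case handled by the elementary decomposition you give. One purely cosmetic point: you refer to ``the boxed identity'', but nothing in your text is actually boxed.
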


Let us consider a simple example illustrating Theorem~\ref{twpispecial}.

\begin{example}
Let $f \colon \mathbb R \to \mathbb R$ be a uniformly almost periodic function given by the formula $f(t) = 2+\cos(t)+\cos(\sqrt{2}t)$. Then the firing rate for the PI model driven by the function $f$ equals
\[
  \FR(t)=\mathcal{M}\{f\} =\lim_{T\to+\infty}\frac{1}{T}\int_{0}^T \bigl(2+\cos(u)+\cos(\sqrt{2}u)\bigr)\textup du=2 \quad \text{for $t \in \mathbb R$}.
\]
\end{example}

In some situations the existence of the firing rate implies also the existence of the mean value.

\begin{proposition}\label{prop:mean_fr}
Suppose that $\sigma=0$ and that $f \in L^1_{\text{loc}}(\mathbb R)$ is such that $f(t)\geq 0$ for a.e. $t \in \mathbb R$. If for some $s \in \mathbb R$ the firing rate $\FR(s)$ for the PI model exists\footnote{We do not exclude the case $\FR(s)=+\infty$.}, then also the mean value $\mathcal M\{f\}$ exists, and moreover $\mathcal M\{f\}=\FR(s)$.
\end{proposition}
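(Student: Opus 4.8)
The plan is to extract, from the existence of the firing rate $\FR(s)$, enough control on the integrals $\int_0^{\Phi^n(s)}f$ to pin down the limit $\frac{1}{T}\int_0^T f(u)\,\ud u$ as $T\to+\infty$. Since $\sigma=0$, the implicit equation~\eqref{implicitnafiringdlalif} becomes simply
\[
 \int_{\Phi^{i-1}(s)}^{\Phi^i(s)} f(u)\,\ud u = 1 \qquad \text{for every $i \in \mathbb N$},
\]
so summing over $i$ gives $\int_s^{\Phi^n(s)} f(u)\,\ud u = n$, and hence $\int_0^{\Phi^n(s)} f(u)\,\ud u = n + c$, where $c\zdef\int_0^s f(u)\,\ud u$ is a constant independent of $n$. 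Since $f\geq 0$ a.e.\ and $\Phi^n(s)\to+\infty$ (cf.~Remark~\ref{rem:Phi_w_d_2} and the proof of Proposition~\ref{poprawneokr eslenie}), the function $F(T)\zdef \int_0^T f(u)\,\ud u$ is non-decreasing and continuous, and $F(\Phi^n(s)) = n + c$.

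The key step is monotone interpolation. First I would set $p_n \zdef \Phi^n(s)$, so that $(p_n)$ is strictly increasing and unbounded, and consider two cases according to whether $\FR(s)\in(0,+\infty)$, $\FR(s)=0$, or $\FR(s)=+\infty$. In the finite positive case, fix $T>0$ large and let $n=n(T)$ be the unique index with $p_n \leq T < p_{n+1}$. Monotonicity of $F$ gives
\[
 \frac{n+c}{T} = \frac{F(p_n)}{T} \leq \frac{F(T)}{T} \leq \frac{F(p_{n+1})}{T} = \frac{n+1+c}{T},
\]
and since $p_n \leq T < p_{n+1}$ we also have $\frac{n+1+c}{p_{n+1}} \leq \frac{n+1+c}{T}$ and $\frac{n+c}{T} \leq \frac{n+c}{p_n}$, which is the wrong direction; so instead I would bound $\frac{F(T)}{T}$ between $\frac{n+c}{p_{n+1}}$ and $\frac{n+1+c}{p_n}$. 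As $T\to+\infty$ we have $n(T)\to\infty$, and $\frac{n}{p_n}\to \FR(s)$ by hypothesis together with $\frac{p_{n+1}}{p_n}\to 1$ — this last fact follows because $F(p_{n+1})-F(p_n)=1$ while $F(p_n)=n+c\to\infty$, so if $\frac{p_{n+1}}{p_n}$ did not tend to $1$ one could extract a contradiction with $\frac{n}{p_n}\to\FR(s)\in(0,\infty)$; both bracketing quantities then converge to $\FR(s)$, giving $\mathcal M\{f\}=\FR(s)$. The cases $\FR(s)=0$ and $\FR(s)=+\infty$ are handled by the same bracketing, using only one of the two inequalities and observing, respectively, that $\frac{n+1+c}{p_n}\to 0$ or that $\frac{n+c}{p_{n+1}}\to+\infty$ (for the latter, from $\frac{n}{p_n}\to 0$ and $p_{n+1}=p_n + \int_{p_n}^{p_{n+1}}1$ one still gets $\frac{p_{n+1}}{p_n}\to 1$ provided $\FR(s)=+\infty$ forces $p_n/n\to 0$, hence $p_{n+1}/n\to 0$ as well).

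The main obstacle I anticipate is the control of the ratio $p_{n+1}/p_n$ (equivalently, ruling out that $\Phi^{n+1}(s)-\Phi^n(s) = \Psi(\Phi^n(s))$ fails to be $o(p_n)$): without some such estimate the interpolation between consecutive iterates is too crude. The way around it is precisely that $F(p_{n+1})-F(p_n)=1$ is \emph{bounded}, so the increment $p_{n+1}-p_n$ cannot carry a fixed proportion of $p_n$ once $F(p_n)\to\infty$ — a short argument by contradiction using the hypothesised limit $\frac{n}{p_n}\to \FR(s)$ closes this gap. Everything else is routine: non-negativity of $f$ to get monotonicity of $F$, and the elementary squeeze. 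I would also remark at the end that this shows the firing rate, when it exists for one $s$, exists for all $t$ and is independent of $t$, recovering the statement of Theorem~\ref{twpispecial} in the converse direction.
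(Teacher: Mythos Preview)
Your approach is essentially the paper's: sandwich $\frac{1}{T}\int_0^T f$ between quantities indexed by the unique $n$ with $\Phi^n(s)\le T<\Phi^{n+1}(s)$, using $\int_s^{\Phi^n(s)}f=n$ and the monotonicity of $F$ coming from $f\ge 0$. The paper works with $\frac{1}{T}\int_s^T f$ and splits the error as $\bigl|\FR(s)-\frac{k}{\Phi^k(s)}\bigr|+\bigl|\frac{k}{\Phi^k(s)}-\frac{k}{T}\bigr|+\frac{1}{T}\int_{\Phi^k(s)}^T f$, but the content is the same squeeze.

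One correction: the ``main obstacle'' you flag --- control of $p_{n+1}/p_n$ --- is a phantom, and your proposed fix for it is wrong. The fact that $F(p_{n+1})-F(p_n)=1$ is bounded does \emph{not} prevent $p_{n+1}-p_n$ from being a fixed proportion of $p_n$: nothing rules out $f$ being tiny on $[p_n,p_{n+1}]$. The honest reason $p_{n+1}/p_n\to 1$ in the finite positive case is simply that $p_n\sim n/\FR(s)$ by hypothesis. But you do not even need this ratio: your own brackets $\frac{n+c}{p_{n+1}}$ and $\frac{n+1+c}{p_n}$ converge to $\FR(s)$ directly, since $\frac{n+c}{p_{n+1}}=\frac{n+c}{n+1}\cdot\frac{n+1}{p_{n+1}}\to \FR(s)$ and $\frac{n+1+c}{p_n}=\frac{n}{p_n}+\frac{1+c}{p_n}\to \FR(s)$. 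The paper likewise never invokes $p_{n+1}/p_n$.

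For the case $\FR(s)=+\infty$, your parenthetical contains a slip (``from $n/p_n\to 0$'' is the wrong direction), but the conclusion is immediate anyway: $\frac{n+c}{p_{n+1}}=\frac{n+c}{n+1}\cdot\frac{n+1}{p_{n+1}}\to+\infty$, so the lower bracket alone forces $\frac{1}{T}\int_0^T f\to+\infty$. No separate argument about increments is needed.
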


\begin{proof}
First, let us observe that $(\Phi^n(s))_{n \in \mathbb N}$ is an increasing sequence such that $\lim_{n \to \infty} \Phi^n(s)=+\infty$ (cf. the proof of Proposition~\ref{poprawneokr eslenie}). In particular, $\FR(s) \geq 0$.

Suppose that $\FR(s)$ is finite. Given $\varepsilon >0$ let $k_0 \in \mathbb N$ be such that for all $k \geq k_0$ we have
\[
 0 < \Phi^k(s), \qquad \bgabs{\FR(s)-\frac{k}{\Phi^k(s)}}\leq \frac{1}{3}\varepsilon, \qquad \frac{k}{\Phi^k(s)} - \frac{k}{\Phi^{k+1}(s)} \leq \frac{1}{3}\varepsilon, \qquad \frac{1}{\Phi^k(s)}\leq \frac{1}{3}\varepsilon.
\] 
Then, for $T \geq \Phi^{k_0}(s)$ we have
\begin{equation}\label{eq:meanvalue_fr}
 \bgabs{\FR(s) - \frac{1}{T}\int_s^T f(u)\textup du} \leq \varepsilon.
\end{equation}
Indeed, because the sequence $(\Phi^n(s))_{n\N}$ is increasing and $\lim_{n \to \infty} \Phi^n(s)=+\infty$, for every $T\geq \Phi^{k_0}(s)$ there exists $k\geq k_0$ such that $\Phi^k(s)\leq T\leq \Phi^{k+1}(s)$, and then 
\begin{align*}
 \bgabs{ \FR(s) -\frac{1}{T}\int_{s}^T f(u) \textup du}&\leq \bgabs{\FR(s)-\frac{k}{\Phi^k(s)}} +\bgabs{\frac{k}{\Phi^k(s)}-\frac{1}{T}\int_{s}^T f(u)\textup du}\\
 & \leq \frac{1}{3}\eps+ \bgabs{\frac{k}{\Phi^k(s)}-\frac{1}{T}\int_{s}^{\Phi^k(s)}f(u) \textup du-\frac{1}{T}\int_{\Phi^k(s)}^{T}f(u)\textup du}\\
 & \leq \frac{1}{3}\eps+\bgabs{\frac{k}{\Phi^k(s)}-\frac{k}{T}}+\frac{1}{\Phi^k(s)}\int_{\Phi^k(s)}^{\Phi^{k+1}(s)}f(u)\textup du\\
 & \leq \frac{1}{3}\eps+\frac{k}{\Phi^k(s)}-\frac{k}{\Phi^{k+1}(s)}+\frac{1}{\Phi^k(s)}\leq \eps.
\end{align*}
This proves~\eqref{eq:meanvalue_fr}, and shows that $\mathcal M\{f\} = \FR(s)$. 

Now, let us assume that $\FR(s)=+\infty$. Given $N>0$, there exists $k_0\N$ such that $\Phi^{k_0}(s)> 0$ and
\[
\frac{k}{\Phi^{k+1}(s)}\geq N \quad \text{for $k\geq k_0$}.
\]
Then for $T\geq \Phi^{k_0}(s)$ we have
\[
\frac{1}{T}\int_{s}^Tf(u)\textup du\geq N.
\]
Indeed, for every $T\geq \Phi^{k_0}(s)$ there exists $k\geq k_0$ such that $\Phi^k(s)\leq T \leq \Phi^{k+1}(s)$, and then 
\[
\frac{1}{T}\int_{s}^Tf(u) \textup du\geq \frac{1}{\Phi^{k+1}(s)}\int_{s}^{\Phi^k(s)}f(u) \textup du=\frac{k}{\Phi^{k+1}(s)}\geq N.
\]
This clearly shows that $\mathcal M\{f\}=+\infty$.
\end{proof}

From Theorem~\ref{twpispecial} and Proposition~\ref{prop:mean_fr} we get the following

\begin{corollary}\label{cor:mean_fr}
Suppose that $\sigma=0$ and that $f \in L^1_{\text{loc}}(\mathbb R)$ is such that $f(t)\geq 0$ for a.e. $t \in \mathbb R$. If for some $s \in \mathbb R$ the firing rate $\FR(s)$ exists, then the mean value $\mathcal M\{f\}$ and the firing rates $\FR(t)$, $t \in \mathbb R$, exist, and moreover $\FR(t)=\mathcal M\{f\} \in [0,+\infty]$ for $t \in \mathbb R$.
\end{corollary}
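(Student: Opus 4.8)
The statement is a direct splice of the two preceding results: Proposition~\ref{prop:mean_fr} converts the existence of a single firing rate into the existence of the mean value, and Theorem~\ref{twpispecial} converts the existence of the mean value back into the existence of \emph{every} firing rate. So the plan is simply to chain these two implications, the only point requiring a moment's care being the standing hypothesis in Theorem~\ref{twpispecial} that the firing map $\Phi$ be well-defined on all of $\mathbb R$.

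First I would record that, since $\FR(s)$ is assumed to exist, all the iterates $\Phi^n(s)$, $n \in \mathbb N$, are well-defined (this is exactly the implicit assumption noted in Remark~\ref{rem:zdefiniowanie}). By Remark~\ref{rem:Phi_w_d_2}, the fact that for \emph{some} $s \in \mathbb R$ all iterates $\Phi^n(s)$ are well-defined already forces the firing map $\Phi$ corresponding to the PI model to be well-defined for every $t \in \mathbb R$. With this in hand, Proposition~\ref{prop:mean_fr} applies (its hypotheses $\sigma=0$ and $f \geq 0$ a.e.\ are precisely those of the corollary), and it yields that the mean value $\mathcal M\{f\}$ exists and satisfies $\mathcal M\{f\} = \FR(s) \in [0,+\infty]$.

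Next, having established that $\mathcal M\{f\}$ exists and that $\Phi$ is well-defined for every $t \in \mathbb R$, I would invoke Theorem~\ref{twpispecial}: it gives that for every $t \in \mathbb R$ the firing rate $\FR(t)$ for the PI model exists and equals $\mathcal M\{f\}$. Combining this with the previous step, $\FR(t) = \mathcal M\{f\} = \FR(s) \in [0,+\infty]$ for all $t \in \mathbb R$, which is the assertion of the corollary. I do not anticipate any genuine obstacle here; the proof is essentially bookkeeping, and the one place to be vigilant is not to circularly invoke Theorem~\ref{twpispecial} before the well-definedness of $\Phi$ on all of $\mathbb R$ has been secured from the hypothesis that $\FR(s)$ exists.
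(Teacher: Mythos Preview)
Your proof is correct and follows exactly the route the paper indicates: it derives the corollary directly from Proposition~\ref{prop:mean_fr} and Theorem~\ref{twpispecial}. Your extra care in using Remark~\ref{rem:Phi_w_d_2} to secure the well-definedness of $\Phi$ on all of $\mathbb R$ before invoking Theorem~\ref{twpispecial} is a nice touch that the paper leaves implicit.
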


Corollary~\ref{cor:mean_fr} can be viewed as a result on the invariance of the firing rate closely related with the following known 

\begin{proposition}[cf.~\cite{brette1}*{Proposition~1}]\label{prop:fr_takie_samo}
Let $f \in L^1_{\text{loc}}(\mathbb R)$ be such that $f(t)-\sigma \geq 0$ for a.e. $t\in \mathbb R$. If the firing rate $\FR(t)$ for  the LIF model~\eqref{lif4}--\eqref{lif_spiking} exists for some $t \in \mathbb R$, then it exists for every $s \in \mathbb R$ and $\FR(s)=\FR(t)$.  
\end{proposition}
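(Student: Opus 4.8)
The plan is to exploit the hypothesis $f(t)-\sigma\ge 0$ a.e., which by Lemma~\ref{lem:rosnie} (and the remark following it) guarantees that $\Phi$ is nondecreasing wherever it is defined, and by Remark~\ref{rem:rosnacy} that the domain of $\Phi$ is of the form $(-\infty,s_0]$ or all of $\mathbb R$; since we are told $\FR(t)$ exists for some $t$, all iterates $\Phi^n(t)$ are well-defined, and hence (Remark~\ref{rem:Phi_w_d_2}) $\Phi$ is well-defined on all of $\mathbb R$. So fix such a $t$ with $\FR(t)=\lim_{n\to\infty} n/\Phi^n(t)=:c\in[0,+\infty]$, and take an arbitrary $s\in\mathbb R$; I want to show $\lim_{n\to\infty} n/\Phi^n(s)=c$.

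The key step is a comparison/interlacing argument between the orbits of $s$ and $t$ under $\Phi$. Without loss of generality assume $s\le t$ (the case $s>t$ is symmetric, interchanging the roles). Since $\Phi$ is nondecreasing and $s\le t$, we get $\Phi^n(s)\le\Phi^n(t)$ for all $n$, and since $\Phi^n(s)\to+\infty$ and $\Phi^n(t)\to+\infty$ (as in the proof of Proposition~\ref{poprawneokr eslenie}), for each $n$ there is a well-defined integer $m(n)\ge 0$ with $\Phi^{m(n)}(t)\le\Phi^n(s)\le\Phi^{m(n)+1}(t)$. Monotonicity of $\Phi$ then gives the interlacing of the two orbits; the crucial bound to extract is that the \emph{difference} $n-m(n)$ stays bounded. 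For this I would argue as follows: since $s\le t$, choose $p\in\mathbb N$ with $\Phi^p(s)\ge t$ (possible because $\Phi^n(s)\to+\infty$); then, again by monotonicity, $\Phi^{p+j}(s)\ge\Phi^j(t)$ for every $j\ge 0$, so in the notation above $m(p+j)\ge j$, i.e.\ $m(n)\ge n-p$ for all large $n$. Conversely, since $\Phi(x)\ge x$ always, $\Phi^n(s)\ge\Phi^{n-?}(\cdot)$ — more simply, from $s\le t$ and monotonicity $\Phi^n(s)\le\Phi^n(t)$, whence $m(n)\le n$. Combining, $n-p\le m(n)\le n$ for all large $n$, so $|n-m(n)|\le p$, a constant independent of $n$.

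Now I conclude by squeezing. From $\Phi^{m(n)}(t)\le\Phi^n(s)\le\Phi^{m(n)+1}(t)$ we get
\[
 \frac{n}{\Phi^{m(n)+1}(t)}\;\le\;\frac{n}{\Phi^n(s)}\;\le\;\frac{n}{\Phi^{m(n)}(t)}.
\]
Write $n/\Phi^{m(n)}(t)=\bigl(m(n)/\Phi^{m(n)}(t)\bigr)\cdot\bigl(n/m(n)\bigr)$; since $m(n)\to\infty$ and $m(n)/\Phi^{m(n)}(t)\to c$ (subsequence of the defining limit), while $n/m(n)\to 1$ because $|n-m(n)|\le p$, the right-hand side tends to $c$; an identical computation with $m(n)+1$ in place of $m(n)$ shows the left-hand side tends to $c$ as well (when $c=+\infty$ one runs the same argument with the reciprocals, showing $\Phi^n(s)/n\to 0$). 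Hence $n/\Phi^n(s)\to c$, i.e.\ $\FR(s)=\FR(t)$, which is what we wanted.

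The main obstacle is the bookkeeping in the second paragraph: one must be careful that $\Phi$ need only be \emph{nondecreasing} (not strictly increasing) under the hypothesis $f-\sigma\ge 0$, so the interlacing integers $m(n)$ need not be uniquely determined and the inequalities $\Phi^{m(n)}(t)\le\Phi^n(s)\le\Phi^{m(n)+1}(t)$ must be set up to tolerate flat spots; also the degenerate case $c=+\infty$ (equivalently all $\Phi^n(t)$ finite but $\Phi^n(t)/n\to 0$, which includes genuinely the possibility that $f-\sigma$ vanishes on large sets) needs the reciprocal version of the squeeze. Everything else — monotonicity, $\Phi^n\to+\infty$, $\Phi(x)\ge x$ — is already in hand from Lemma~\ref{lem:rosnie}, Remark~\ref{rem:rosnacy}, and the proof of Proposition~\ref{poprawneokr eslenie}.
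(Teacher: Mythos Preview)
The paper does not actually prove this proposition: it is stated with a citation to \cite{brette1}*{Proposition~1} and no argument is given. So there is no ``paper's own proof'' to compare against; your task was effectively to supply one, and the interlacing/squeeze argument you give is correct in substance.

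A few minor clean-ups are worth making. First, the claim ``for each $n$ there is $m(n)\ge 0$ with $\Phi^{m(n)}(t)\le\Phi^n(s)\le\Phi^{m(n)+1}(t)$'' only holds once $\Phi^n(s)\ge t$, i.e.\ for $n$ large; this is harmless for the limit but should be said. Second, the ``symmetric, interchanging the roles'' remark for the case $s>t$ is slightly off, since the roles of $s$ and $t$ are not symmetric (you \emph{know} $\FR(t)$ exists and want to \emph{deduce} $\FR(s)$); the argument for $s>t$ is in fact the easier direction: from $\Phi^n(t)\le\Phi^n(s)$ and, choosing $q$ with $\Phi^q(t)\ge s$, $\Phi^n(s)\le\Phi^{n+q}(t)$, one squeezes directly. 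Third, your $m(n)$ machinery is more than you need even in the case $s\le t$: once you have established $\Phi^{n-p}(t)\le\Phi^n(s)\le\Phi^n(t)$ for $n\ge p$, the inequalities
\[
 \frac{n}{\Phi^n(t)}\;\le\;\frac{n}{\Phi^n(s)}\;\le\;\frac{n}{n-p}\cdot\frac{n-p}{\Phi^{n-p}(t)}
\]
already give the squeeze without introducing $m(n)$ at all. None of this affects correctness; the argument as written goes through, including the $c=0$ and $c=+\infty$ endpoints.
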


\begin{remark}
Let us add that the fact that the firing rate for integrate-and-fire models with right-hand side of the form $F(x,t)$, if exists, does not depend on the initial point was observed in~\cite{brette1}. The function $F$ was assumed to be sufficiently regular so that the corresponding differential equation had a unique solution starting from any initial condition and had to be either decreasing in $x$ for all $t$, or satisfy the condition: $F(0,t)>0$ for every $t \in \mathbb R$. However, no general conditions on $F$ (apart from the very simple ones corresponding to PI models)  guaranteeing the existence of the firing rate were given.
\end{remark}

Now, we would like to find a result similar to Theorem~\ref{twpispecial} in the `LIF setting'. To this end, let us recall a few facts from the rotation theory.

\begin{definition}
The \emph{rotation number} of $t \in\mathbb{R}$ with respect to  $f\colon \mathbb{R}\to\mathbb{R}$ is defined as the limiting average displacement
\begin{displaymath}
\varrho(f,t):=\lim_{n\to\infty}\frac{f^n(t)-t}{n},
\end{displaymath}
provided the limit exists.
\end{definition}

\begin{remark}
In the theory of integrate-and-fire models the rotation number of a point $t \in \mathbb R$ with respect to the firing map $\Phi$ is called the \emph{average interspike interval}.
\end{remark}

\begin{definition}
The \emph{pointwise rotation set} of $f \colon \mathbb R \to \mathbb R$  is defined as
\[
\varrho_p(f):=\set{\varrho(f,t)}{\text{$t\in\mathbb{R}$ for which $\varrho(f,t)$ exists}}.
\]
\end{definition}

\begin{theorem}[cf.~\cite{kwapisz}*{Theorem 1 and Theorem~2}]\label{twkwapisz1}
Suppose that a non-decreasing function $f \colon \mathbb R \to \mathbb R$ admits a decomposition $f(t)=t+g(t)$, where $g \colon \mathbb R \to \mathbb R$ is uniformly is almost periodic and $\inf_{t\in \mathbb R}g(t)>0$. Then $\varrho_p(f) =\{r\}$ for some $ r\in \mathbb R$.
\end{theorem}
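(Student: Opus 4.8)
The plan is to show that $\varrho(f,t)$ exists for every $t \in \mathbb R$ and equals one and the same number $\alpha$, which immediately gives $\varrho_{p}(f) = \{\alpha\}$. Write $g = f - \mathrm{id}$; then $g$ is uniformly almost periodic, hence uniformly continuous and bounded. Put $\delta = \inf_{t \in \mathbb R} g(t) > 0$ and $M = \sup_{t \in \mathbb R} g(t) < +\infty$. From $t + \delta \le f(t) \le t + M$ and the semigroup property $f^{m+n} = f^{m}\circ f^{n}$ one gets $t + n\delta \le f^{n}(t) \le t + nM$, so $(f^{n}(t) - t)/n \in [\delta, M]$ for all $n$ and $t$. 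What is left is convergence together with independence of the initial point.

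First I would make a structural reduction. Set $a_{n} = \sup_{t}(f^{n}(t) - t)$ and $b_{n} = \inf_{t}(f^{n}(t) - t)$, both finite and contained in $[n\delta, nM]$. Writing $f^{m+n}(t) - t = (f^{m} - \mathrm{id})\bigl(f^{n}(t)\bigr) + \bigl(f^{n}(t) - t\bigr)$ gives $a_{m+n} \le a_{m} + a_{n}$ and $b_{m+n} \ge b_{m} + b_{n}$, so by Fekete's lemma $a_{n}/n$ decreases to some $\alpha$ and $b_{n}/n$ increases to some $\beta$ with $\delta \le \beta \le \alpha \le M$. Since $b_{n} \le f^{n}(t) - t \le a_{n}$ for every $t$, once one knows $\alpha = \beta$ the squeeze gives $\varrho(f,t) = \alpha$ for all $t$ and the theorem follows. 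Thus everything reduces to $a_{n} - b_{n} = o(n)$; I expect this to be the main obstacle, and it is exactly here that the almost-periodicity of $g$ must enter essentially — a naive attempt that compares $f^{n}(t)$ and $f^{n}(t+\tau)$ for a single $\varepsilon$-almost period $\tau$ of $g$, propagating the error through uniform continuity, only yields the trivial bound $a_{n} - b_{n} = O(n)$, since the per-step error accumulates.

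To get $\alpha = \beta$ I would pass to the Bohr hull. Let $\Omega$ be the closure of $\{g^{\tau} : \tau \in \mathbb R\}$ in $\bigl(C_{b}(\mathbb R), \norm{\cdot}_{\infty}\bigr)$; since $g$ is uniformly almost periodic, $\Omega$ is compact and the translation flow $\sigma^{s}h = h^{s}$ is minimal and acts by isometries, hence is uniquely ergodic. The evaluation $e \colon \Omega \to \mathbb R$, $e(h) = h(0)$, is continuous, satisfies $e(\sigma^{s}h) = h(s)$, and $\delta \le e \le M$ on $\Omega$ (these bounds pass to uniform limits). Define $\widehat f \colon \Omega \times \mathbb R \to \Omega \times \mathbb R$ by $\widehat f(h,u) = \bigl(\sigma^{e(h)}h,\ u + e(h)\bigr)$; a short induction gives the linearising identity
\[
 \widehat f^{\,n}\bigl(g^{t},0\bigr) = \bigl(g^{\,f^{n}(t)},\ f^{n}(t) - t\bigr) \qquad (t \in \mathbb R,\ n \in \mathbb N),
\]
so the displacement $f^{n}(t) - t$ is simply the second coordinate of $\widehat f^{\,n}(g^{t},0)$. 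Now the homeomorphism $R \colon \Omega \to \Omega$, $R(h) = \sigma^{e(h)}h$ (the $\Omega$-component of $\widehat f$), is the first-return map to the global section $\Omega \times \{0\}$ of the suspension flow of $(\Omega, \sigma)$ with roof $e$; since $e$ is continuous and bounded away from $0$, that suspension is uniquely ergodic, hence so is $R$, say with invariant probability measure $\mu_{R}$. Because $\pi_{2}\widehat f^{\,n}(h,0) = \sum_{k=0}^{n-1} e\bigl(R^{k}h\bigr)$, unique ergodicity of $R$ (uniform convergence of Birkhoff averages of continuous functions) yields $\tfrac1n \sum_{k=0}^{n-1} e\bigl(R^{k}h\bigr) \to \alpha := \int_{\Omega} e \,\mathrm d\mu_{R}$ uniformly in $h \in \Omega$. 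Taking $h = g^{t}$ and using the identity, $(f^{n}(t) - t)/n \to \alpha$ uniformly in $t \in \mathbb R$; in particular $a_{n}/n, b_{n}/n \to \alpha$, so $\beta = \alpha$, whence $\varrho(f,t) = \alpha$ for every $t$ and $\varrho_{p}(f) = \{\alpha\}$.

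The hard part, as flagged, is the equality $\alpha = \beta$, and in the route above it is concentrated in the unique-ergodicity input: one must invoke (or reprove) that the hull of a uniformly almost periodic function is compact and carries a uniquely ergodic translation flow, and that unique ergodicity is inherited by the suspension under a continuous roof bounded away from zero and by its first-return map; after that, Oxtoby's theorem on uniform Birkhoff averages finishes the job, and the linearising identity requires only a careful induction. This is in essence the argument of Kwapisz in~\cite{kwapisz}. An alternative would be to establish $a_{n} - b_{n} = o(n)$ directly from the relative density of the almost periods of $g$ together with the monotonicity of $f$, but the delicate point there — keeping the error produced by composing almost-period shifts sublinear in $n$ — is precisely what makes that combinatorial argument nontrivial.
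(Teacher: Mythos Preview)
The paper does not prove this theorem at all; it is quoted from Kwapisz and used as a black box in the proof of Theorem~\ref{thm:LIF_rotation}. So there is no ``paper's proof'' to compare against, only the question of whether your argument stands on its own.

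Your reduction via Fekete's lemma to $a_n - b_n = o(n)$ is correct, and passing to the Bohr hull $\Omega$ is the natural move (and is what Kwapisz does). The gap is in the ergodic step. The phrase ``the suspension flow of $(\Omega,\sigma)$ with roof $e$'' is ill-formed: $\sigma$ is already a flow, and one suspends a \emph{map}. If you mean the suspension of the map $R$ with roof $e$, then that suspension is \emph{not} isomorphic to $(\Omega,\sigma)$ --- the factor map $(h,s)\mapsto\sigma^{s}h$ collapses a whole circle's worth of points --- and it is uniquely ergodic if and only if $R$ is, so your implication runs the wrong way. And $R$ can genuinely fail to be uniquely ergodic: take $g(t)=2\pi+\varepsilon\sin t$ with $0<\varepsilon<1$, so that $f$ is strictly increasing and $\inf g>0$. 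Then $\Omega\cong\mathbb R/2\pi\mathbb Z$ and $R(\theta)=\theta+\varepsilon\sin\theta\pmod{2\pi}$ has the two fixed points $0$ and $\pi$, hence at least two ergodic measures $\delta_{0},\delta_{\pi}$. In this example the Birkhoff averages of $e$ still agree ($e(0)=e(\pi)=2\pi$), but that equality is precisely the periodic case of the theorem and relies on the monotonicity of $f$; it is not a consequence of unique ergodicity of $R$, which is simply false here. So Oxtoby's theorem is unavailable, and your route to $a_n-b_n=o(n)$ breaks.

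Kwapisz does work on the hull, but the mechanism that replaces your unique-ergodicity shortcut is the order structure: one extends $f$ to a compact family $\{f_\omega(t)=t+\omega(t):\omega\in\Omega\}$ of non-decreasing maps of the line and uses monotonicity together with compactness of $\Omega$ to control $a_n-b_n$. The monotonicity hypothesis is not decorative --- it is exactly what compensates for the lack of unique ergodicity of $R$ --- and any correct proof has to use it at that point.
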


\begin{remark}
Let us add that J.~Kwapisz in~\cite{kwapisz} gave a thorough description of rotation sets and rotation numbers for functions with uniformly almost periodic displacements.
\end{remark}

Now, we can move to the main result of this subsection. 

\begin{theorem}\label{thm:LIF_rotation}
Let $f \colon \mathbb R \to \mathbb R$ be a $S^1$-almost periodic function and assume that there exists $\varsigma>0$ such that $f(t)-\sigma > \varsigma$ for a.e. $t \in \mathbb R$. Then for every $t \in \mathbb R$ the firing rate for the LIF model~\eqref{lif4}--\eqref{lif_spiking} driven by $f$ exists, and moreover $\FR(t)=\FR(0) \in (0,+\infty)$ for $t \in \mathbb R$.
\end{theorem}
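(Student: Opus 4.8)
The plan is to reduce the statement to J.~Kwapisz's result on real-line maps with uniformly almost periodic displacement (Theorem~\ref{twkwapisz1}) and then to convert the resulting rotation number into the firing rate.

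First I would check that $\Phi$, and hence every iterate $\Phi^n$, is well-defined on all of $\mathbb R$, so that the quantity in the definition of $\FR(t)$ makes sense (cf.~Remark~\ref{rem:zdefiniowanie}): since $f(t)-\sigma>\varsigma$ for a.e.\ $t$, one has $\int_0^t\bigl(f(u)-\sigma\bigr)\ue^{\sigma u}\,\ud u\ge\varsigma\int_0^t\ue^{\sigma u}\,\ud u\to+\infty$ as $t\to+\infty$, so~\eqref{warunek} holds and Proposition~\ref{poprawneokr eslenie} applies. Next I would assemble the structural facts about $\Phi$: it is strictly increasing by Lemma~\ref{lem:rosnie}, hence non-decreasing; its displacement $\Psi$ is uniformly almost periodic by Theorem~\ref{new almost periodic displacement}; and $0<\inf_{t\in\mathbb R}\Psi(t)$ together with $\Psi(t)\le 1/\varsigma$ for all $t$ — the former from Theorem~\ref{lem:uniform_con_bound} (whose hypothesis~\ref{it:mu_s1} holds here by Remark~\ref{rem:4.20}, since $f>\sigma+\varsigma\ge 0$), the latter directly from $\ue^{\sigma t}=\int_t^{\Phi(t)}\bigl(f(u)-\sigma\bigr)\ue^{\sigma u}\,\ud u\ge\varsigma\,\ue^{\sigma t}\Psi(t)$.

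With these in place, $\Phi(t)=t+\Psi(t)$ satisfies the hypotheses of Theorem~\ref{twkwapisz1}, so $\varrho_p(\Phi)=\{r\}$ for some $r\in\mathbb R$; that is, $\varrho(\Phi,t)=\lim_{n\to\infty}\frac{\Phi^n(t)-t}{n}$ exists for every $t$ and is independent of $t$. To locate $r$ I would use the telescoping identity $\Phi^n(t)-t=\sum_{i=0}^{n-1}\Psi\bigl(\Phi^i(t)\bigr)$ together with the two-sided bound on $\Psi$, getting $\inf_s\Psi(s)\le\frac{\Phi^n(t)-t}{n}\le 1/\varsigma$ for all $n$, and hence $r\in(0,+\infty)$ after letting $n\to\infty$. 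Finally, since $\Phi^n(t)\ge t+n\inf_s\Psi(s)\to+\infty$, I would write, for every $t\in\mathbb R$,
\[
 \frac{n}{\Phi^n(t)}=\Bigl(\frac{\Phi^n(t)-t}{n}+\frac{t}{n}\Bigr)^{-1}\longrightarrow\frac{1}{r} \qquad\text{as $n\to\infty$,}
\]
so that $\FR(t)$ exists and equals $1/r$; in particular $\FR(t)=\FR(0)=1/r\in(0,+\infty)$ for all $t$, as claimed. (Alternatively, once the existence of $\FR(0)$ is known, the invariance $\FR(t)=\FR(0)$ follows from Proposition~\ref{prop:fr_takie_samo}.)

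The real content here is imported — Theorem~\ref{new almost periodic displacement} (uniform almost periodicity of $\Psi$), Theorem~\ref{lem:uniform_con_bound} (positivity of $\inf\Psi$ and its boundedness) and the external Theorem~\ref{twkwapisz1} — so I do not expect a serious obstacle in the present argument. The one point that needs care is verifying that $\Phi$ meets \emph{all} of Kwapisz's hypotheses simultaneously (monotonicity, the decomposition $t+\Psi(t)$ with $\Psi$ uniformly almost periodic, and $\inf\Psi>0$); once that is done, the passage from the rotation number to $\FR$ is exactly the elementary limit computation displayed above. This is also precisely why, as noted in the introduction, the step from $\sigma=0$ to $\sigma\ge 0$ is not automatic: for the PI model one can argue directly through the mean value, whereas for $\sigma>0$ the exponential weight $\ue^{\sigma u}$ in~\eqref{implicitnafiringdlalif} obstructs that route and forces the detour through rotation theory.
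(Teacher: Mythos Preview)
Your proposal is correct and follows essentially the same approach as the paper: verify the hypotheses of Kwapisz's Theorem~\ref{twkwapisz1} via Lemma~\ref{lem:rosnie}, Theorem~\ref{new almost periodic displacement}, and Theorem~\ref{lem:uniform_con_bound} (through Remark~\ref{rem:4.20}), obtain $\varrho_p(\Phi)=\{r\}$, show $r>0$ by the telescoping sum, and then invert to get the firing rate. The only cosmetic difference is that the paper extracts $\varrho(\Phi,t)=r$ for some $t$ and then invokes Proposition~\ref{prop:fr_takie_samo}, whereas you read Kwapisz's result as giving the rotation number for every $t$ and pass to $\FR(t)=1/r$ by the displayed elementary limit---both routes are fine.
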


\begin{proof}
Since $f$ is a $S^1$-almost periodic function and $f(t)-\sigma>\varsigma>0$ a.e. on $\mathbb R$, the displacement map $\Psi$ corresponding to the LIF model driven by $f$ is uniformly almost periodic (see Theorem~\ref{new almost periodic displacement}). Moreover, in view of Lemma~\ref{lem:rosnie} and Theorem~\ref{lem:uniform_con_bound} (cf. also Remark~\ref{rem:4.20}), the firing map $\Phi$ satisfies the assumptions of Theorem~\ref{twkwapisz1}, and so there is a number $r \in \mathbb R$ such that $\varrho_p(\Phi)=\{r\}$, that is, 
\[
 \lim_{n \to \infty} \frac{\Phi^n(t) -t}{n}=r \qquad \text{for some $t \in \mathbb R$}.
\]

Now, we will show that $r>0$. For every $n \in \mathbb N$ we have
\[
 \Phi^n(t) - t = \sum_{i=1}^n \bigl(\Phi^{i}(t)-\Phi^{i-1}(t)\bigr) = \sum_{i=1}^n \Psi\bigl(\Phi^{i-1}(t)\bigr) \geq n\cdot \psi,
\]
where $\psi:=\inf_{t \in \mathbb R}\Psi(t)>0$ and $\Phi^0(t):=t$, and thus $r \geq \psi>0$. This shows that the firing rate $\FR(t)$, which in this case is the multiplicative inverse of the rotation number $\varrho(\Phi,t)$ exists and is positive. To end the proof it suffices to apply Proposition~\ref{prop:fr_takie_samo}.
\end{proof}

\begin{remark}
Let us observe that if $\Psi(\mathbb R) \subseteq [a,b]$ for some positive real numbers $a,b$, then $\FR(t) \in [1/b,1/a]$, provided $\FR(t)$ exists.
\end{remark}

The question whether for LIF models the firing rate can be expressed in terms of the mean value of the drive (as it is for PI models) still remains open. The following example shows, however, that if such an expression exists it must depend not only on the mean value of the input. We are going to define two $S^1$-almost periodic functions with the same mean value which give rise to LIF models with different firing rates.

\begin{example}
Let us consider two locally integrable periodic (and thus $S^1$-almost periodic) functions $f,g \colon \mathbb R \to \mathbb R$ given by
\[
 f(t)=\begin{cases}
         2 \ & \text{for $t \in \bigl[k\ln3, k\ln3+\ln2\bigr)$, $k \in \mathbb Z$},\\
				 3 \ & \text{for $t \in \bigl[k\ln3+\ln2,(k+1)\ln3\bigr)$, $k \in \mathbb Z$} 
				\end{cases} \qquad \text{and} \qquad g(t)=3-\log_3 2.
\]
It is easy to see that $\mathcal M\{f\}=\mathcal M\{g\}=3-\log_{3}2$.

Now, we will show that the firing rates $\FR_f$ and $\FR_g$ for the LIF model~\eqref{lif4}--\eqref{lif_spiking} with $\sigma=1$ driven by $f$ and $g$, respectively, are different. First. let us note that, in view of Theorem~\ref{thm:LIF_rotation}, the firing rates $\FR_f(t)$ and $\FR_g(t)$ exist for every $t \in \mathbb R$ and $\FR_f(t)=\FR_f(0)$, $\FR_g(t)=\FR_g(0)$.

By $\Phi_f$ and $\Phi_g$ let us denote the firing maps corresponding to the inputs $f$ and $g$. Since $\Phi_f(0)=\ln 2$, $\Phi^2_f(0)=\ln3$ and $\Phi^2_f(t+\ln3)=\Phi^2_f(t)+\ln3$ for $t\R$ (cf. Proposition~\ref{factbykeener}), we infer that $\Phi^{2n}_f(0)=n \ln 3$ for $n\N$. Thus $\FR_f(0)=2/\ln 3$. Similarly,
\[
\Phi^n_g(t)=t+n\ln\biggl(1+\frac{1}{2-\log_{3}2}\biggr) \qquad \text{for $t \in \mathbb R$ and $n \in \mathbb N$},
\]
and hence
\[
\FR_g(0)=\biggl[\ln\biggl(1+\frac{1}{2-\log_{3}2}\biggr)\biggr]^{-1}.
\]

Finally, it suffices to note that $\FR_f(0)\neq \FR_g(0)$.
\end{example}

At the end of this section, let us point out that  establishing the existence and uniqueness of  the firing rate is an important issue, since it allows to qualitatively characterise the spike train of a given model via the average frequency of firing, independently of the initial condition.  This result for the LIF model in both cases of (locally integrable) periodic  and Stepanov almost periodic drive was achieved by studying the displacement map $\Psi$ and showing that it is periodic or, correspondingly, uniformly almost periodic. In the situation of a sufficiently regular periodic forcing (see~\cite{MS}), periodic displacement allowed to project the firing map onto the circle and view it as a lift of the orientation-preserving circle homeomorphism, for which the existence and uniqueness of  the rotation number follows immediately from the classical Poincar\'{e} rotation theory. However, in the almost periodic case we needed to rely on the much more recent results of \cite{kwapisz} on the maps of the real line with almost periodic displacements. Investigating the dynamics of such maps, which would lead to more detailed description of the corresponding spike trains, is also much more challenging. However, asserting the almost periodicity of the displacement map $\Psi$ opens up the possibility of using some results on different almost periodic structures (see e.g.~\cite{APTJ}).

\section{Approximation of $S^p$-almost periodic functions by Haar wavelets}
\label{sec:7}

As we have seen in Section~\ref{sec:firing_map}, giving an exact arithmetic formula for the firing map $\Phi$ is in general not an easy task. However, it is much easier to obtain the formula for $\Phi$, when the forcing term $f$ is a piece-wise constant function (see Example~\ref{ex:phi_f_piecewise_constant}). Thus, in the case of almost periodic inputs, it seems that it would be better if we could use Haar series rather than Fourier series to approximate such functions.

Our aim in this section is to provide the answer to the following question: Is it possible to approximate Stepanov almost periodic functions by Haar wavelets with any desired accuracy  in the appropriate almost periodic norm?

Before we proceed further, let us recall some facts from the wavelet theory.

\begin{definition}
The \emph{Haar wavelet} is the function $h \colon \mathbb R \to \mathbb R$ defined by the formula $h(t)=\chi_{[0,\frac{1}{2})}(t) - \chi_{[\frac{1}{2},1)}(t)$, where $\chi_A$ denotes the characteristic function of the set $A \subseteq \mathbb R$.
\end{definition}

To simplify the notation for $k \in \mathbb Z$ and $j=2^m+r$, where $m$ is a non-negative integer and $r=1,2,\ldots,2^m$, let
\[
 h_{k,1}(t) = \chi_{[k,k+1)}(t) \quad \text{and} \quad h_{k,j}(t)=2^{\frac{1}{2}m} h\bigl(2^m(t-k) - r +1\bigr).
\]
The collection $\set{h_{k,j}}{k \in \mathbb Z,\ j \in \mathbb N}$ is called the \emph{Haar system}. Since $h_{k,j} \in L^q(\mathbb R)$ for every $1 \leq q \leq +\infty$ and $\supp h_{k,j} \subseteq [k,k+1]$, given a $S^p$-almost periodic function $f \colon \mathbb R \to \mathbb R$ (where $1\leq p <+\infty$), we can define the \emph{Haar--Fourier coefficients}
\[
 a_{k,j} \zdef \int_{\mathbb R} f(u)h_{k,j}(u) \textup du.
\] 
Let us also define the projection operators $P_n$, $n \in \mathbb N$, on $S^p(\mathbb R)$ by
\begin{equation}\label{eq:HF}
 (P_n f)(t) = \sum_{k=-\infty}^{+\infty} \sum_{j=1}^n a_{k,j}h_{k,j}(t) \qquad \text{for $t \in \mathbb R$}.
\end{equation}
Observe that for each $t \in \mathbb R$ only finitely many values $h_{k,j}(t)$ are non-zero, and so the sum in~\eqref{eq:HF} is in fact finite, which means that the projections $P_nf$ are well-defined on $\mathbb R$. Moreover, it should be clear that $P_n f \in L^p_{\text{loc}}(\mathbb R)$.

For a thorough treatment of the wavelet theory we refer the Reader to~\cites{Wojtaszczyk, Walnut}.

Now, let us pass to the main part of this section. We begin with the following simple

\begin{lemma}\label{lem:7.1}
Let $[a, b), [c, d)\subseteq [0, 1)$. If $f$ is a $S^1$-almost periodic function, then the function $P \colon \mathbb R \to \mathbb R$ defined by the formula
\[
P(t)=\sum_{k=-\infty}^{+\infty}\biggl(\int_{a}^{b}f(u+k)\textup du\biggr)\chi_{[c, d)}(t-k), \quad t \in \mathbb R,
\]
is $S^p$-almost periodic for every $p \in [1,+\infty)$.
\end{lemma}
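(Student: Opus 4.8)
The plan is to reduce everything to \emph{integer} almost-periods of $f$, so that a translation merely re-indexes the intervals on which $P$ is piecewise constant instead of displacing them. Write $c_k\zdef\int_a^b f(u+k)\,\textup du=\int_{a+k}^{b+k}f(v)\,\textup dv$ for $k\Z$, so that $P(t)=\sum_{k\in\mathbb Z}c_k\,\chi_{[c,d)}(t-k)$. Since $[a+k,b+k)\subseteq[k,k+1)$ we have $\abs{c_k}\le\int_k^{k+1}\abs{f(v)}\,\textup dv\le\norm{f}_{S^1_1}$, which is finite because every $S^1$-almost periodic function is $S^1$-bounded (Remark~\ref{rem:sp_bounded}). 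Moreover the blocks $[k+c,k+d)$, $k\Z$, are pairwise disjoint and $[k+c,k+d)\subseteq[k,k+1)$, so for each $t$ at most one summand defining $P(t)$ is non-zero; hence $P$ is well defined and measurable with $\norm{P}_\infty\le\norm{f}_{S^1_1}$, so in particular $P\in L^p_{\text{loc}}(\mathbb R)$ for every $p\in[1,+\infty)$.

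Next I would compute the translate $P^n$ (in the notation of the paper, $P^n(t)=P(t+n)$) for an integer $n$: the substitution $k=j+n$ gives $P^n(t)=\sum_{j\in\mathbb Z}c_{j+n}\,\chi_{[c,d)}(t-j)$, whence
\[
P^n(t)-P(t)=\sum_{j\in\mathbb Z}(c_{j+n}-c_j)\,\chi_{[c,d)}(t-j).
\]
Because the indicators $\chi_{[c,d)}(\,\cdot-j)$ have pairwise disjoint supports, at each point $t$ at most one summand is non-zero, so $\abs{P^n(t)-P(t)}^p=\sum_{j\in\mathbb Z}\abs{c_{j+n}-c_j}^p\,\chi_{[c,d)}(t-j)$. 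Estimating $\abs{c_{j+n}-c_j}=\bgabs{\int_{a+j}^{b+j}\bigl(f(v+n)-f(v)\bigr)\,\textup dv}\le\int_j^{j+1}\abs{f^n(v)-f(v)}\,\textup dv\le\norm{f^n-f}_{S^1_1}$ uniformly in $j$, and using $\sum_{j\in\mathbb Z}\int_{t_0}^{t_0+1}\chi_{[c,d)}(t-j)\,\textup dt\le 1$ (the blocks being disjoint), I obtain
\[
\int_{t_0}^{t_0+1}\abs{P^n(t)-P(t)}^p\,\textup dt\le\norm{f^n-f}_{S^1_1}^p\qquad\text{for every }t_0\R,
\]
and hence $\norm{P^n-P}_{S^p_1}\le\norm{f^n-f}_{S^1_1}$ for every $p\in[1,+\infty)$.

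To finish, given $\varepsilon>0$ I would invoke the fact that the integer $(S^1,\varepsilon)$-almost periods of $f$ form a relatively dense set (Remark~\ref{rem:rel_dens_e_z}): by the estimate above, $S^1E\{\varepsilon,f\}\cap\mathbb Z\subseteq S^pE\{\varepsilon,P\}$, so $S^pE\{\varepsilon,P\}$ is relatively dense for every $\varepsilon>0$, which is exactly the assertion that $P$ is $S^p$-almost periodic. (Alternatively one could establish $S^1$-almost periodicity of $P$ this way and then invoke $\norm{P}_\infty<+\infty$ with Remark~\ref{rem:mu_S}, but the computation above handles all $p$ at once.)

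The argument is essentially bookkeeping, and the single point requiring care is precisely why it is phrased through integer translations: a general real shift $\tau$ sends each block $[j+c,j+d)$ to $[j+c+\tau,j+d+\tau)$, which no longer lines up with the original partition of $\mathbb R$, so $P^\tau-P$ cannot be written cleanly in terms of the $c_j$'s and compared with $\norm{f^\tau-f}_{S^1_1}$. Restricting to $\tau\in\mathbb Z$ turns translation into a shift of indices and makes this comparison immediate; the only other quantity to track, $\sum_j\int_{t_0}^{t_0+1}\chi_{[c,d)}(t-j)\,\textup dt$, is trivially bounded by $1$ because the blocks are disjoint.
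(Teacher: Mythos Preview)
Your proof is correct and follows essentially the same route as the paper's: both restrict to integer almost-periods $\tau\in S^1E\{\varepsilon,f\}\cap\mathbb Z$ (via Remark~\ref{rem:rel_dens_e_z}), use that an integer shift merely re-indexes the blocks so that $\abs{P^\tau(t)-P(t)}\le\int_{[t]}^{[t]+1}\abs{f^\tau-f}\le\norm{f^\tau-f}_{S^1_1}$, and conclude $S^1E\{\varepsilon,f\}\cap\mathbb Z\subseteq S^pE\{\varepsilon,P\}$. Your presentation is a bit more formal (naming the coefficients $c_k$ and tracking the indicator sum explicitly), but the argument is the same.
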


\begin{proof}
First, let us note that the function $P$ is well-defined, since for every $t \in \mathbb R$ the value of $\chi_{[c, d)}(t-k)$ is non-zero for at most one $k \in \mathbb Z$. Moreover, $P \in L^p_{\text{loc}}(\mathbb R)$ for every $p \in [1,+\infty)$.

Let us fix an arbitrary $\varepsilon>0$ and let $\tau \in S^1E\{\frac{1}{2}\eps, f\}\cap \mathbb Z$. Given $t \in \mathbb R$, if $t-[t] \in [0, 1)\setminus [c, d)$, then $P(t+\tau)=P(t)$. On the other hand, if $t-[t] \in [c,d)$, then 
\[
\abs{P(t+\tau)-P(t)}=\bgabs{\int_{a}^{b}f(u+[t]+\tau) \textup du-\int_{a}^{b}f(u+[t])\textup du}\leq \int_{[t]}^{[t]+1}\babs{f(u+\tau)-f(u)} \textup du\leq \frac{1}{2}\eps.
\]
Therefore,
\[
\sup_{s\R} \biggl(\int_{s}^{s+1}\babs{P(u+\tau)-P(u)}^p \textup du\biggr)^{1/p} < \eps,
\]
which means that $S^1E\{\frac{1}{2}\eps, f\}\cap \mathbb Z \subseteq S^pE\{\eps, P\}$. Thus, $P$ is $S^p$-almost periodic (cf. Proposition~\ref{prop:rel_dens_e_z} and Remark~\ref{rem:rel_dens_e_z}).
\end{proof}

\begin{remark}
Let us note that from the proof of Lemma~\ref{lem:7.1} it follows that for every $\eps>0$ the set $E\{\eps,P\}$ is relatively dense.
However, in general the function $P$ is  not uniformly almost periodic, since it may happen that $P$ is not continuous.
\end{remark}

\begin{corollary}\label{lem:7.2}
If $f$ is $S^1$-almost periodic, then  for every $n \in \mathbb N$ the projection $P_nf$ is $S^p$-almost periodic with any $p \in [1,+\infty)$.
\end{corollary}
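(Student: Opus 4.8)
The plan is to derive the corollary directly from Lemma~\ref{lem:7.1}, using the fact that $S^p(\mathbb R)$ is a vector space (Remark~\ref{rem:s_p_s_1}), so that any finite linear combination of $S^p$-almost periodic functions is again $S^p$-almost periodic. Since for each $t \in \mathbb R$ only finitely many of the functions $h_{k,j}$ are non-zero at $t$, I would first rewrite
\[
 (P_n f)(t) = \sum_{j=1}^n \Biggl( \sum_{k=-\infty}^{+\infty} a_{k,j}\, h_{k,j}(t)\Biggr),
\]
which is a \emph{finite} sum over $j \in \{1,\ldots,n\}$ of well-defined elements of $L^p_{\text{loc}}(\mathbb R)$. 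It therefore suffices to prove that, for each fixed $j$, the function $t \mapsto \sum_{k} a_{k,j}\, h_{k,j}(t)$ is $S^p$-almost periodic for every $p \in [1,+\infty)$.

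The next step is to make explicit the piecewise-constant structure of the Haar system and the resulting structure of the Haar--Fourier coefficients. For $j=1$ we have $h_{k,1}=\chi_{[k,k+1)}$ and $a_{k,1}=\int_0^1 f(v+k)\,\textup dv$, so $\sum_k a_{k,1}h_{k,1}$ is literally a function of the form appearing in Lemma~\ref{lem:7.1} with $[a,b)=[c,d)=[0,1)$. For $j = 2^m+r$ with $m \geq 0$ and $r \in \{1,\ldots,2^m\}$, the definition of $h$ gives
\[
 h_{k,j}(t) = 2^{m/2}\bigl(\chi_{[c_1,d_1)}(t-k) - \chi_{[c_2,d_2)}(t-k)\bigr), \qquad a_{k,j} = 2^{m/2}\biggl(\int_{c_1}^{d_1} f(v+k)\,\textup dv - \int_{c_2}^{d_2} f(v+k)\,\textup dv\biggr),
\]
where $[c_1,d_1)=\bigl[\tfrac{r-1}{2^m},\tfrac{2r-1}{2^{m+1}}\bigr)$ and $[c_2,d_2)=\bigl[\tfrac{2r-1}{2^{m+1}},\tfrac{r}{2^m}\bigr)$ are subintervals of $[0,1)$ (one just changes variables $u=v+k$ in the definition of $a_{k,j}$). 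The crucial point is that these intervals and the factor $2^{m/2}$ depend only on $j$, not on $k$.

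Substituting the two displayed expansions into $a_{k,j}\,h_{k,j}(t)$ and multiplying out, the sum $\sum_{k} a_{k,j}\,h_{k,j}(t)$ becomes a finite linear combination, with coefficients independent of $k$, of functions of the form
\[
 t \longmapsto \sum_{k=-\infty}^{+\infty}\biggl(\int_a^b f(v+k)\,\textup dv\biggr)\chi_{[c,d)}(t-k), \qquad [a,b),\,[c,d) \subseteq [0,1),
\]
each of which is $S^p$-almost periodic for every $p \in [1,+\infty)$ by Lemma~\ref{lem:7.1}. Hence each $\sum_k a_{k,j}\,h_{k,j}$ is $S^p$-almost periodic, and summing over $j \in \{1,\ldots,n\}$ shows that $P_n f$ is $S^p$-almost periodic, as claimed. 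There is no genuine analytic difficulty here beyond Lemma~\ref{lem:7.1}; the only thing requiring care is the bookkeeping in writing $h_{k,j}$ and $a_{k,j}$ as the correct combinations of characteristic functions $\chi_{[c,d)}(\cdot-k)$ and interval integrals $\int_a^b f(\cdot+k)\,\textup dv$, so that Lemma~\ref{lem:7.1} applies termwise.
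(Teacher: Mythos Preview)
Your proof is correct and follows essentially the same approach as the paper: both arguments reduce $P_nf$ to the finite sum $\sum_{j=1}^n h_j$ with $h_j(t)=\sum_k a_{k,j}h_{k,j}(t)$, expand each $a_{k,j}h_{k,j}(t)$ (for $j\geq 2$) into the four cross-terms obtained by multiplying the two-term expressions for $a_{k,j}$ and $h_{k,j}$, and then apply Lemma~\ref{lem:7.1} to each of the resulting summands. The paper simply writes out the four terms explicitly, whereas you describe the expansion in words, but the content is identical.
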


\begin{proof}
From Lemma~\ref{lem:7.1} we know that the function $h_1 \colon \mathbb R \to \mathbb R$ defined as
\[
h_1(t) := \sum_{k=-\infty}^{+\infty}a_{k, 1}h_{k, 1}(t)=\sum_{k=-\infty}^{+\infty}\biggl(\int_{0}^{1}f(u+k)\textup du\biggr)\chi_{[0, 1)}(t-k),
\]
is $S^p$-almost periodic for any $p \in [1, +\infty)$. 

Now, let $j\geq 2$ be of the form $j=2^m+r$, where $m$ is a non-negative integer and $r \in \{1,\ldots,2^m\}$. Since for every $k \in \mathbb Z$ and $t \in \mathbb R$ we have
\begin{align*}
 a_{k,j}h_{k,j}(t) & = 2^{m}\biggl(\int_{\frac{r-1}{2^m}}^{\frac{2r-1}{2^{m+1}}}f(u+k) \textup du\biggr)\chi_{[\frac{r-1}{2^m}, \frac{2r-1}{2^{m+1}})}(t-k)\\
&\qquad -2^{m}\biggl(\int_{\frac{r-1}{2^m}}^{\frac{2r-1}{2^{m+1}}}f(u+k)\textup du\biggr)\chi_{[\frac{2r-1}{2^{m+1}}, \frac{r}{2^m})}(t-k)\\
& \qquad -2^{m}\biggl(\int_{\frac{2r-1}{2^{m+1}}}^{\frac{r}{2^m}}f(u+k)\textup du\biggr)\chi_{[\frac{r-1}{2^m}, \frac{2r-1}{2^{m+1}})}(t-k)\\
&\qquad +2^{m}\biggl(\int_{\frac{2r-1}{2^{m+1}}}^{\frac{r}{2^m}}f(u+k)\textup du\biggr)\chi_{[\frac{2r-1}{2^{m+1}}, \frac{r}{2^m})}(t-k),
\end{align*}
the function $h_j \colon \mathbb R \to \mathbb R$ defined as
\[
h_j(t):= \sum_{k=-\infty}^{+\infty}a_{k, j}h_{k, j}(t),
\]
is $S^p$-almost periodic with any $p \in [1,+\infty)$.

To end the proof it suffices to note that $(P_n f)(t)=\sum_{j=1}^{n}h_j(t)$ for $t \in \mathbb R$.
\end{proof}

Now, we are in position to prove the main theorem of this section.

\begin{theorem}
Let $p \in [1,+\infty)$. Given a $S^p$-almost periodic function $f \colon \mathbb R \to \mathbb R$, the sequence of projections $(P_n f)_{n \in \mathbb N}$ converges to $f$ with respect to the $\norm{\cdot}_{S^p_1}$-norm.
\end{theorem}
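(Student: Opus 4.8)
The plan is to run the classical three-step argument for norm convergence of a sequence of projections: first establish a uniform bound $\sup_{n}\norm{P_n}<+\infty$ for the operators $P_n$ on $S^p(\mathbb R)$, then prove that $P_ng\to g$ in the $\norm{\cdot}_{S^p_1}$-norm for $g$ in a dense subset, and finally close the gap by a density argument. Let us note at the outset that $P_nf$ does land in $S^p(\mathbb R)$ when $f$ is $S^p$-almost periodic: indeed $f$ is then a fortiori $S^1$-almost periodic, so Corollary~\ref{lem:7.2} applies, and the convergence below takes place inside $S^p(\mathbb R)$.

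For the uniform bound, I would first observe that since $\supp h_{k,j}\subseteq[k,k+1]$, the restriction of $P_nf$ to the open interval $(k,k+1)$ coincides with the $n$-th partial sum of the Haar expansion of $f|_{[k,k+1)}$, and in particular depends only on $f$ on that interval. The key is then to identify this partial-sum operator on $L^p[0,1)$. In the natural ordering (the constant first, then $j=2^m+r$ in increasing order), the span of the first $n$ Haar functions is exactly the space of functions that are constant on the atoms of a partition $\mathcal P_n$ of $[0,1)$ into $n$ dyadic intervals; hence the $n$-th partial sum is the conditional expectation onto $\sigma(\mathcal P_n)$, i.e.\ it replaces $f$ on each atom $A$ by its average $\frac{1}{\mu(A)}\int_A f$. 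Jensen's inequality shows that such an averaging operator is a contraction on $L^p$, so $\int_k^{k+1}\abs{P_nf}^p\leq\int_k^{k+1}\abs{f}^p$ for every $k\in\mathbb Z$. Combining this with the elementary two-sided estimate $\sup_{k\in\mathbb Z}\int_k^{k+1}\abs{g}^p\leq\norm{g}_{S^p_1}^p\leq 2\sup_{k\in\mathbb Z}\int_k^{k+1}\abs{g}^p$ (the right inequality because $[t,t+1)\subseteq[\lfloor t\rfloor,\lfloor t\rfloor+2)$) yields $\norm{P_nf}_{S^p_1}\leq 2^{1/p}\norm{f}_{S^p_1}$, uniformly in $n$.

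Next I would show that $\norm{P_ng-g}_{S^p_1}\to 0$ for every bounded and uniformly continuous $g$ — in particular for every generalized trigonometric polynomial. When $2^M\leq n$, each atom of $\mathcal P_n$ has length at most $2^{-M}$, so for every $t\in\mathbb R$ the value $(P_ng)(t)$ is an average of $g$ over an interval of length $\leq 2^{-M}$ containing $t$; consequently $\norm{P_ng-g}_{\infty}\leq\omega_g(2^{-M})$, where $\omega_g$ denotes the modulus of uniform continuity of $g$, and the right-hand side tends to $0$ as $n\to\infty$. Since $\norm{\cdot}_{S^p_1}\leq\norm{\cdot}_{\infty}$, this gives $\norm{P_ng-g}_{S^p_1}\to 0$.

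Finally I would assemble the pieces. By Remark~\ref{rem:s_p_s_1} the generalized trigonometric polynomials are dense in $S^p(\mathbb R)$; given $\varepsilon>0$ we pick such a polynomial $g$ with $(2^{1/p}+1)\norm{f-g}_{S^p_1}<\varepsilon$, whence $\norm{P_nf-f}_{S^p_1}\leq\norm{P_n(f-g)}_{S^p_1}+\norm{P_ng-g}_{S^p_1}+\norm{g-f}_{S^p_1}<\varepsilon+\norm{P_ng-g}_{S^p_1}$, and the last term is $<\varepsilon$ for $n$ large by the previous step. The main obstacle is establishing the uniform boundedness of the operators $P_n$ — more precisely, recognizing the partial sums of the Haar system as dyadic averaging (conditional expectation) operators so that Jensen's inequality applies; the remaining steps are routine.
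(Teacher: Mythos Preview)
Your proof is correct and takes a genuinely different route from the paper's. The paper does not use a uniform-boundedness-plus-density argument at all: instead it invokes a quantitative $L^p$-approximation inequality for Haar partial sums due to Ul'yanov, namely
\[
\biggl(\int_l^{l+1} \abs{(P_n f)(u) - f(u)}^p\,\textup du\biggr)^{1/p}
\leq 24 \sup_{0\leq h\leq 1/n} \biggl(\int_l^{l+1-h} \abs{f(u+h)-f(u)}^p\,\textup du\biggr)^{1/p},
\]
takes the supremum over $l\in\mathbb Z$, and then appeals to the $S^p$-continuity of $S^p$-almost periodic functions (that is, $\norm{f-f^h}_{S^p_1}\to 0$ as $h\to 0$) to conclude. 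Your argument is more self-contained---it needs only the identification of Haar partial sums with dyadic conditional expectations, Jensen's inequality, and the density of trigonometric polynomials---whereas the paper's approach is shorter once the cited inequality is granted and has the bonus of yielding a rate of convergence governed by the Stepanov modulus of continuity of $f$.
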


\begin{proof}
First, let us note that the norm $\norm{\cdot}_{S^p_1}$ is equivalent to the norm $\absp{\cdot}_{S^p_1}$ given by
\[
 \absp{f}_{S^p_1} = \sup_{l \in \mathbb Z} \biggl( \int_l^{l+1} \abs{f(u)}^p \textup du \biggr)^{1/p}
\]
(cf. Remark~\ref{lem:szacowanie_mu}). Therefore, for a given $l \in \mathbb Z$, in view of~\cite{uljanov}*{Theorem~11} (see also~\cite{C}*{Theorem~7}), we have
\begin{align*}
 \biggl(\int_l^{l+1} \abs{(P_n f)(u) - f(u)}^p \textup du\biggr)^{1/p} & = \biggl(\int_l^{l+1} \bgabs{\sum_{j=1}^n a_{l,j}h_{l,j}(u) - f(u)}^p \textup du\biggr)^{1/p}\\
& \leq 24 \sup_{0\leq h\leq \frac{1}{n}} \biggl(\int_l^{l+1-h} \abs{f(u+h)-f(u)}^p \biggr)^{1/p}\\
 & \leq 24 \sup_{0 \leq h \leq \frac{1}{n}} \sup_{l \in \mathbb Z}  \biggl(\int_l^{l+1} \abs{f(u+h)-f(u)}^p \biggr)^{1/p}\\
& = 24\sup_{0 \leq h \leq \frac{1}{n}} \absp{f-f^h}_{S^p_1}.
\end{align*}
But $S^p$-almost periodic functions are $S^p$-continuous, which means that $\absp{f-f^h}_{S^p_1} \to 0$ as $h \to 0$ (see for example~\cite{Stoinski}*{Theorem~2.1} or~\cite{Levitan}*{Theorem~5.2.3}). Thus
\[
 \absp{P_n f-f}_{S^p_1} \leq 24\sup_{0 \leq h \leq \frac{1}{n}} \absp{f-f^h}_{S^p_1} \to 0 \qquad \text{as $n \to +\infty$}.
\]
This shows our claim and ends the proof.
\end{proof}

\begin{remark}
Let us point out that in our approach we have used the same basic Haar system for all the intervals $[k,k+1)$, although the Haar--Fourier coefficients had to be calculated separately for each interval. 

For completeness, let us also add that  the problem of wavelet approximations of certain almost periodic functions was investigated, for example, in \cites{falki2,kim,falki1}.
\end{remark}

\appendix
\section{Some remarks on the mean value}
\label{app:mean_value}

In this appendix we would like to address very briefly a problem which is not directly connected with the study of integrate-and-fire models, but which seems interesting from the point of view of the general theory of ordinary differential equations in spaces of almost periodic functions. 

It is known that if $f \in AP(\mathbb R)$, then its antiderivative, that is the function $F(t) \zdef \int_0^t f(s)\textup ds$, is uniformly almost periodic if and only if it is bounded (see, for example,~\cite{corduneanu}*{Theorem~4.1}). For this reason there are plenty of examples of uniformly almost periodic functions whose antiderivatives fail to be uniformly almost periodic (take for example the function $f(t)=\frac{1}{2}+\sin{t}$). As a consequence, it is not so straightforward (in general) to apply the fixed point approach in order to establish existence (and uniqueness) results for differential and integral equations in classes of almost periodic functions. 

Given a uniformly almost periodic function $f \colon \mathbb R \to \mathbb R$ we would like to find a real number $m$ such that the function $g \colon \mathbb R \to \mathbb R$ defined by $g(t)=f(t)-m$ admits a uniformly almost periodic antiderivative. Let us observe that from the fact that a uniformly almost periodic function with bounded antiderivative must have zero mean value, it follows that the sole candidate for the number $m$ is the mean value $\mathcal M\{f\}$. 

First, let us consider the case when $f$ is a locally integrable periodic function, although the class of such functions is not contained in $AP(\mathbb R$).

\begin{proposition}\label{prop:m}
Let $f \colon \mathbb R \to \mathbb R$ be a locally integrable periodic function with a period $\omega >0$. Then the antiderivative $F$ of the function $t \mapsto f(t)-\mathcal M\{f\}$ is bounded. 
\end{proposition}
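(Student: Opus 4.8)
The plan is to reduce the boundedness of $F$ to the observation that $F$ is itself $\omega$-periodic, after which the conclusion is immediate. First I would write $g \colon \mathbb R \to \mathbb R$, $g(t) = f(t) - \mathcal M\{f\}$, and recall that for a locally integrable $\omega$-periodic function the mean value exists and equals $\frac{1}{\omega}\int_0^{\omega} f(s)\,\textup ds$ (cf.~\cite{Zaidman}*{Remark, p.~88}); consequently $g$ is again locally integrable and $\omega$-periodic, and $\int_0^{\omega} g(s)\,\textup ds = 0$.

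Next I would invoke the elementary fact that the integral of a locally integrable $\omega$-periodic function over any interval of length $\omega$ coincides with its integral over $[0,\omega]$. Applying this to $g$, for every $t \in \mathbb R$ we obtain
\[
 F(t+\omega) - F(t) = \int_t^{t+\omega} g(s)\,\textup ds = \int_0^{\omega} g(s)\,\textup ds = 0,
\]
so that $F$ is $\omega$-periodic. Since $g \in L^1_{\text{loc}}(\mathbb R)$, the antiderivative $F(t) = \int_0^t g(s)\,\textup ds$ is absolutely continuous on every compact interval, in particular continuous, and hence bounded on $[0,\omega]$; combined with the $\omega$-periodicity just established, this yields $\sup_{t \in \mathbb R}\abs{F(t)} = \sup_{t \in [0,\omega]}\abs{F(t)} < +\infty$, which is exactly the assertion.

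No genuine obstacle arises here; the only two points that deserve an explicit word are the shift-invariance of the integral over a period-length window (which follows by splitting $[t, t+\omega]$ at the nearest integer multiple of $\omega$ and using the $\omega$-periodicity of $g$) and the continuity of $F$ (which is a direct consequence of the local integrability of $g$). I would record both, keeping the verification brief.
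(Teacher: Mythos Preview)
Your proof is correct and rests on the same key observation as the paper's, namely that $\int_0^{\omega} g(s)\,\textup ds = 0$; the paper writes $t = k\omega + h$ and computes directly that $\abs{F(t)} \leq 2\int_0^{\omega}\abs{f(s)}\,\textup ds$, whereas you repackage this as the statement that $F$ is $\omega$-periodic and continuous, hence bounded. The two arguments are essentially equivalent, with the paper's version yielding an explicit bound and yours giving a slightly cleaner conceptual reason.
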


\begin{proof}
Let us recall that in the case of locally integrable periodic functions, the mean value can be expressed in a simpler form, namely
\[
 \mathcal M\{f\}=\frac{1}{\omega}\int_0^\omega f(s)\textup ds
\]
(see~\cite{Zaidman}*{Remark, p.~88}). Fix $t \geq 0$. Then $t=k\omega+h$ for some $k \in \mathbb N\cup\{0\}$ and $h \in [0,\omega)$. Since
\[
 \int_0^t f(s)\textup ds = \sum_{n=1}^k \int_{(n-1)\omega}^{n\omega} f(s) \textup ds + \int_{k\omega}^t f(s) \textup ds = k\int_0^\omega f(s) \textup ds + \int_0^h f(s) \textup ds,
\]
we get
\begin{align*}
 \bgabs{\int_0^t f(s) \textup ds - \mathcal M\{f\}t} & = \bgabs{k\int_0^\omega f(s) \textup ds+ \int_0^h f(s) \textup ds - \frac{t}{\omega} \int_0^\omega f(s)\textup ds }\\
 & = \bgabs{\int_0^h f(s) \textup ds - \frac{h}{\omega} \int_0^\omega f(s) \textup ds} \leq 2\int_0^\omega \abs{f(s)} \textup ds.
\end{align*} 
The proof for $t <0$ is analogous.
\end{proof}

From Proposition~\ref{prop:m} we immediately get the following

\begin{corollary}
If $f \colon \mathbb R \to \mathbb R$ is a generalized trigonometric polynomial, that is, $f(t) = \sum_{j=1}^n\bigl( a_j \sin(\lambda_j t) + b_j\cos(\lambda_j t)\bigr)$, where $a_j,b_j,\lambda_j \in \mathbb R$, then the antiderivative of the function $t \mapsto f(t)-\mathcal M\{f\}$ is bounded.
\end{corollary}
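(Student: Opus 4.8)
The plan is to reduce the statement to Proposition~\ref{prop:m} by decomposing $f$ into its individual summands. First I would write $f = \sum_{j=1}^n f_j$, where $f_j(t) = a_j\sin(\lambda_j t) + b_j\cos(\lambda_j t)$, and observe that each $f_j$ is a continuous periodic function: if $\lambda_j \neq 0$, then $f_j$ is $\frac{2\pi}{\abs{\lambda_j}}$-periodic, while if $\lambda_j = 0$, then $f_j$ reduces to the constant $b_j$, which is periodic with an arbitrary positive period. In either case $f_j$ is a locally integrable periodic function, so Proposition~\ref{prop:m} applies and gives that the antiderivative $F_j(t) = \int_0^t \bigl(f_j(s) - \mathcal M\{f_j\}\bigr)\,\textup ds$ is bounded on $\mathbb R$.

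Next I would use the additivity of the mean value, which here is immediate since each $\mathcal M\{f_j\}$ exists (in fact $\mathcal M\{f_j\} = b_j$ when $\lambda_j = 0$ and $\mathcal M\{f_j\} = 0$ otherwise), to conclude that $\mathcal M\{f\} = \sum_{j=1}^n \mathcal M\{f_j\}$. Consequently $f(t) - \mathcal M\{f\} = \sum_{j=1}^n\bigl(f_j(t) - \mathcal M\{f_j\}\bigr)$, and therefore the antiderivative of $t \mapsto f(t) - \mathcal M\{f\}$ is exactly $\sum_{j=1}^n F_j$, a finite sum of bounded functions and hence bounded. This would complete the argument.

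I do not anticipate any genuine obstacle here; the only points requiring a word of care are the degenerate case $\lambda_j = 0$ (handled by noting that a constant function is periodic with any positive period) and the legitimacy of distributing the mean value over the finite sum (handled by the existence of each $\mathcal M\{f_j\}$ together with the linearity of the limit defining $\mathcal M$). As an alternative, one could avoid Proposition~\ref{prop:m} altogether and compute $\int_0^t f(s)\,\textup ds$ explicitly, obtaining $\sum_{\lambda_j \neq 0}\bigl(\tfrac{a_j}{\lambda_j}(1-\cos(\lambda_j t)) + \tfrac{b_j}{\lambda_j}\sin(\lambda_j t)\bigr) + \bigl(\sum_{\lambda_j = 0} b_j\bigr)t$, from which the boundedness of the antiderivative of $t \mapsto f(t) - \mathcal M\{f\}$ is transparent; but the reduction to Proposition~\ref{prop:m} is cleaner and matches the "immediately" in the text.
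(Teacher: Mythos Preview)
Your proposal is correct and matches the paper's intended argument: the paper states only that the corollary follows ``immediately'' from Proposition~\ref{prop:m}, and your decomposition $f = \sum_{j=1}^n f_j$ into periodic summands, together with the additivity of the mean value over finite sums, is precisely the way to make that immediacy explicit. Your handling of the degenerate case $\lambda_j = 0$ and the remark on the alternative explicit-computation route are both appropriate.
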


However, in general, the antiderivative of the function $t \mapsto f(t) - \mathcal M \{f\}$, where $f \in AP(\mathbb R)$, may be unbounded as is shown by the following

\begin{example}[cf.~\cite{Stoinski}*{pp.~39-40} or~\cite{corduneanu}*{Corollary~2, p.~31}]
Let us consider the function $f \colon \mathbb R \to \mathbb R$ defined by the following formula
\[
 f(t) =  -\sum_{n=1}^{\infty} \frac{1}{n^2} \sin\biggl(\frac{t}{n^2}\biggr) \qquad \text{for $t \in \mathbb R$},
\]
Since the above series is uniformly convergent on $\mathbb R$, the function $f$ is uniformly almost periodic. Moreover, $\mathcal M\{f\}=0$. 

However, the antiderivatife $F$ of $f$ is not uniformly almost periodic, since if it were, then its Fourier series would be of the form
\[
 c + \sum_{n=1}^{\infty} \cos\biggl(\frac{t}{n^2}\biggr)
\]
for some $c\in \mathbb R$, which is impossible due to the fact that the sequence of coefficients of the Fourier series of a uniformly almost periodic function is convergent to zero (see~\cite{Stoinski}*{p.~37} or~cf.~\cite{corduneanu}*{Theorem~1.18}). 
\end{example}

\begin{remark}
It turns out that the problem addressed in this appendix is closely connected with the study of the class of the so-called $(IC)$-almost periodic functions, since the function $t \mapsto f(t) - \mathcal M \{f\}$, where $f \colon \mathbb R \to \mathbb R$ is uniformly almost periodic, admits bounded antiderivative if and only if $f - \mathcal M \{f\}$ is $(IC)$-almost periodic.

We refer the Reader interested in $(IC)$-almost periodic functions to the paper~\cite{A1} or to the monograph~\cite{Stoinski}*{Section~3.2}. The discussion concerning the relationship between almost periodic functions and their antiderivatives can be also found in~\cites{St3, St4}.
\end{remark}

\subsection*{Acknowledgements}
We are indebted to D.~Bugajewski from Adam Mickiewicz University for encouraging A.~Nawrocki and P.~Kasprzak to investigate integrate-and-fire models and their applications and for stimulating discussions concerning the paper. We are also indebted to A.~Kamont from Institute of Mathematics of the Polish Academy of Sciences, who has shown J. Signerska-Rynkowska the method of approximating Stepanov almost periodic functions by Haar wavelets.  Last but not least, we thank P.~Ma\'ckowiak from Pozna\'n University of Economics for his question concerning the relationship between the mean value and the antiderivative of an almost periodic function, which led to the formulation of the results stated in Appendix.

The last author gratefully acknowledge the support received from National Science Centre within the grant 2014/15/B/ST1/01710.

\begin{bibdiv}
\begin{biblist}

\bib{A1}{article}{
   author={Adamczak, M.},
   title={Some remarks on almost periodic functions},
   journal={Fasc. Math.},
   number={33},
   date={2002},
   pages={5--19},
}

\bib{APTJ}{article}{
 author={Aliste-Prieto, J.},
 author={J\"ager, T.},
 year={2012},
 title={Almost periodic structures and the semiconjugacy problem},
 journal={J. Differential Equations},
 volume={252},
 pages={4988--5001},
}

\bib{andres}{article}{
  author={Andres, J.},
	author={Bersani, A. M.},
	author={Grande, R. F.},
	year={2006},
	title={Hierarchy of almost-periodic function spaces},
	journal={Rend. Mat. Appl.},
	volume={26},
	pages={121--188},
}

\bib{BS}{book}{
   author={Bainov, D.},
   author={Simeonov, P.},
   title={Integral Inequalities and Applications},
   publisher={Kluwer Academic Publshers},
   address={Dordrecht Boston London},
   date={1992},
}

\bib{ref}{article}{
 author={Bell, H.},
 author={Meyer, K. R.},
 year={1995},
 title={Limit periodic functions, adding machines, and solenoids},
 journal={J. Dyn. Differ. Equ.},
 volume={7},
 pages={409--422},
}

\bib{bezykowicz}{book}{
 author={Besicovitch, A. S.},
 year={1955},
 title={Almost Periodic Functions},
 publisher={Dover Publications, Inc.,},
 address={New York},
}

\bib{brette1}{article}{
 author={Brette, R.},
 year={2004},
 title={Dynamics of one-dimensional spiking neuron model},
 journal={J. Math. Biol.},
 volume={48},
 pages={38--56},
}

\bib{BD}{article}{
   author={Bugajewski, D.},
   author={Diagana, T.},
   title={Almost automorphy of the convolution operator and applications to
   differential and functional differential equations},
   journal={Nonlinear Stud.},
   volume={13},
   date={2006},
   number={2},
   pages={129--140},
}

\bib{B-N}{article}{
title={Some remarks on almost periodic functions in view of the Lebesgue measure with applications to linear differential equations},
author={Bugajewski, D.},
author={Nawrocki, A.},
note={(submitted)},
}

\bib{BG}{article}{
   author={Bugajewski, D.},
   author={N'Gu{\'e}r{\'e}kata, G. M.},
   title={On some classes of almost periodic functions in abstract spaces},
   journal={Int. J. Math. Math. Sci.},
   date={2004},
   number={61},
   pages={3237--3247},
}

\bib{car-ong}{article}{
 author={Carrillo, H.},
 author={Ongay, F. A.},
 year={2001},
 title={On the firing maps of a general class of forced integrate-and-fire neurons},
 journal={Math. Biosci.},
 volume={172},
 pages={33--53},
}

\bib{C}{article}{
   author={Ciesielski, Z.},
   title={Properties of the orthonormal Franklin system. II},
   journal={Studia Math.},
   volume={27},
   date={1966},
   pages={289--323},
}

\bib{corduneanu}{book}{
 author={Corduneanu, C.},
 year={1989},
 title={Almost Periodic Functions},
 note={With the collaboration of N. Gheorghiu and V. Barbu},
 publisher={Chelsea Publishing Company},
 address={New York},
 edition={2nd Edition},
}

\bib{Fink}{book}
{title={Almost Periodic Differential Equations},
author={Fink, A. M.},
date={1974},
publisher={Springer-Verlag},
address={Berlin Heidelberg New York},
series={Lecture Notes in Mathematics},
volume={377},
}

\bib{falki2}{article}{
 author={Galindo, F.},
 year={2004},
 title={Some remarks on \emph{On the windowed Fourier Transform and Wavelet Transform of almost periodic functions}, by {J.R. Partington and B. Unalmis}},
 journal={Appl. Comput. Harmon. Anal.},
 volume={16},
 pages={174--181},
}

\bib{gedeon}{article}{
 author={Gedeon, T.},
 author={Holzer, M.},
 year={2004},
 title={Phase locking in integrate-and-fire models with refractory periods and modulation},
 journal={J. Math. Biol.},
 volume={49},
 pages={577–-603},
}

\bib{ger}{book}{
  title={Neuronal Dynamics: From Single Neurons to Networks and Models of Cognition},
  author={Gerstner, W.},
	author={Kistler, W. M.},
	author={Naud, R.},
	author={Paninski, L.},
  year={2014},
  publisher={Cambridge University Press}
}

\bib{HSt}{book}{
   author={Hewitt, E.},
   author={Stromberg, K.},
   title={Real and Abstract Analysis},
   subtitle={A Modern Treatment of the Theory of Functions of a Real Variable},
   publisher={Springer-Verlag},
   address={Berlin Heidelberg New York},
   date={1975},
   series={Graduate Texts in Mathematics},
   volume={25},
}

\bib{hodgin}{article}{
 author={Hodkgin, A.},
 author={Huxley, A.},
 year={1952},
 title={A quantitative description of membrane current and its application to conduction and excitation in nerve},
 journal={J. Physiol. (Lond)},
 volume={117},
 pages={500–-544},
}

\bib{izykiewicz}{book}{
 author={Izhikevich, E. M.},
 year={2007},
 title={Dynamical Systems in Neuroscience: The Geometry of Excitability and Bursting},
 publisher={The MIT Press},
 address={Cambridge MA},
}

\bib{jager2011}{article}{
   author={J{\"a}ger, T.},
   title={Neuronal coding of pacemaker neurons---a random dynamical systems
   approach},
   journal={Commun. Pure Appl. Anal.},
   volume={10},
   date={2011},
   number={3},
   pages={995--1009},
}

\bib{keener1}{article}{
 author={Keener, J. P.},
 author={Hoppensteadt, F. C.},
 author={Rinzel, J.},
 year={1981},
 title={Integrate-and-fire models of nerve membrane response to oscillatory input},
 journal={SIAM J. Appl. Math.},
 volume={41},
 pages={503--517},
}

\bib{kim}{article}{
 author={Kim, Y. H.},
 author={Ron, A.},
 year={2009},
 title={Time frequency representations of almost periodic functions},
 journal={Constr. Approx.},
 volume={29},
 pages={303--323},
}

\bib{kwapisz}{article}{
 author={Kwapisz, J.},
 year={2000},
 title={Poincar\'{e} rotation number for maps of the real line with almost periodic displacement},
 journal={Nonlinearity},
 volume={13},
 pages={1841--1854},
}

\bib{lapicque}{article}{
 author={Lapicque, L.},
 year={1907},
 title={Recherches quantitatives sur l'excitation électrique des nerfs traitée comme une polarisation},
 journal={J. Physiol. Pathol. Gen.},
 volume={9},
 pages={620--635},
}

\bib{Levitan}{book}
{author={Levitan, B. M.},
 title={Po\v cti-periodi\v ceskie funkcii},
 language={in Russian},
 publisher={Gosudarstv. Izdat. Tehn.-Teor. Lit., Moscow},
 date={1953},
 pages={396},
}

\bib{wmjs1}{article}{
 author={Marzantowicz, W.},
 author={Signerska, J.},
 year={2011},
 title={Firing map of an almost periodic input function},
 journal={DCDS Suppl. 2011},
 volume={2},
 pages={1032--1041},
}

\bib{MS}{article}{
   author={Marzantowicz, W.},
   author={Signerska, J.},
   title={On the interspike-intervals of periodically-driven integrate-and-fire models},
   journal={J. Math. Anal. Appl.},
   volume={423},
   date={2015},
   number={1},
   pages={456--479},
}

\bib{falki1}{article}{
 author={Partington, J. R.},
 author={Unalmis, B.},
 year={2001},
 title={On the windowed Fourier transform and wavelet transform of almost periodic functions},
 journal={Appl. Comput. Harmon. Anal.},
 volume={10},
 pages={45--60},
}

\bib{js2015}{article}{
   author={Signerska-Rynkowska, J.},
   title={Analysis of interspike-intervals for the general class of integrate-and-fire models with periodic drive},
   journal={Math. Model. Anal.},
   volume={20},
   date={2015},
   number={5},
   pages={529--551},
}
 
\bib{S}{article}{
   author={Stepanoff, W.},
   title={\"Uber einige Verallgemeinerungen der fast periodischen
   Funktionen},
   language={in German},
   journal={Math. Ann.},
   volume={95},
   date={1926},
   number={1},
   pages={473--498},
}

\bib{St1}{article}{
title={Almost periodic function in the Lebesgue measure},
author={Stoi\'nski, S.},
journal={Comment. Math. Prace Mat.},
volume={34},
pages={189--198},
year={1994},
}

\bib{St2}{article}{
title={On compactness of almost periodic functions in the Lebesgue measure},
author={Stoi\'nski, S.},
journal={Fasc. Math.},
volume={30},
pages={171--175},
year={1999},
}

\bib{St3}{article}{
   author={Stoi{\'n}ski, S.},
   title={Some remarks on spaces of almost periodic functions},
   journal={Fasc. Math.},
   number={31},
   date={2001},
   pages={105--115},
}

\bib{Stoinski}{book}
{title={Almost Periodic Functions},
author={Stoi\'nski, S.},
date={2008},
publisher={Wydawnictwo Naukowe UAM},
address={Pozna\'n},
language={in Polish},
}

\bib{St4}{article}{
   author={Stoi{\'n}ski, S.},
   title={A note on the primitive function of a Bohr almost periodic
   function},
   journal={Comment. Math.},
   volume={51},
   date={2011},
   number={1},
   pages={77--80},
}

\bib{tiesinga}{article}{
 author={Tiesinga, P. H. E.},
 year={2002},
 title={Precision and reliability of periodically and quasiperiodically driven integrate-and-fire neurons},
 journal={Physical Review E},
 volume={65},
 pages={041913},
}

\bib{TB}{article}{
   author={Touboul, J.},
   author={Brette, R.},
   title={Spiking dynamics of bidimensional integrate-and-fire neurons},
   journal={SIAM J. Appl. Dyn. Syst.},
   volume={8},
   date={2009},
   number={4},
   pages={1462--1506},
}

\bib{uljanov}{article}{
   author={Ul{\cprime}janov, P. L.},
   title={On Haar series},
   language={in Russian},
   journal={Mat. Sb. (N. S.)},
   volume={63 (105)},
	 number={3},
   date={1964},
   pages={356--391},
}

\bib{Walnut}{book}{
   author={Walnut, D. F.},
   title={An introduction to wavelet analysis},
   series={Applied and Numerical Harmonic Analysis},
   publisher={Birkh\"auser Boston, Inc., Boston, MA},
   date={2002},
}

\bib{Wojtaszczyk}{book}{
 author={Wojtaszczyk, P.},
 year={1997},
 title={A Mathematical Introduction to Wavelets},
 publisher={Cambridge University Press},
series={London Mathematical Society Student Texts 37},
 address={Cambridge},
}

\bib{Zaidman}{book}
{title={Almost-periodic Functions in Abstract Spaces},
author={Zaidman, S.},
series={Research Notes in Mathematics},
volume={126},
date={1985},
publisher={Pitman Advanced Publishing Program},
address={Boston--London--Melbourne},
}

\end{biblist}
\end{bibdiv}

\end{document}